\providecommand{\U}[1]{\protect\rule{.1in}{.1in}}
\newtheorem{theorem}{Theorem}
\newtheorem{theorem1}{Theorem}
\newtheorem{corollary}[theorem]{Corollary}
\newtheorem{definition}[theorem]{Definition}
\newtheorem{example}[theorem]{Example}
\newtheorem{lemma}[theorem]{Lemma}
\newtheorem{notation}[theorem]{Notation}
\newtheorem{proposition}[theorem]{Proposition}
\newtheorem{remark}[theorem]{Remark}
\newenvironment{proof}[1][Proof]{\noindent\textbf{#1.} }{\ $\square$}
\newenvironment{proof2}[1][]{\noindent }{\ $\square$}
\begin{document}

\title{An occupation time formula for semimartingales in $\mathbb{R}^{N}$}
\author{A. Bevilacqua, F. Flandoli}
\maketitle

\begin{abstract}
Inspired by coarea formula in geometric measure theory, an occupation time
formula for continuous semimartingales in $\mathbb{R}^{N}$ is proven. The occupation
measure of a semimartingale, for $N\geq2$, is singular with respect to
Lebesgue measure but it has a bounded density ``transversal'' to a foliation,
under proper assumptions. In the particular case of the foliation given
locally by the distance function from a manifold, the transversal
density is related to a geometric local time of the semimartingale at the
manifolds of the foliation.

\end{abstract}

\section{Introduction}
The occupation time formula for real valued continuous semimartingales reads%
\begin{equation*}
\int_{0}^{T}f\left(  X_{t}\right)  d\left\langle X\right\rangle _{t}%
=\int_{\mathbb{R}}f\left(  a\right)  L_{T}^{a}\left(  X\right)  da
\end{equation*}
for all positive Borel functions $f:\mathbb{R\rightarrow R}$, where $L_{T}%
^{a}\left(  X\right)  $ is the local time at $a$ of $X$ over $\left[
0,T\right]  $. This formula is related to the correction term of It\^{o}
formula and explains its extensions beyond $C^{2}$ functions. The aim of this
paper is to discuss a possible extension to semimartingales in $\mathbb{R}%
^{N}$, $N\geq2$.

Let $X$ be a continuous semimartingale in $\mathbb R^N$. When $\langle X^j,X^k\rangle_t$ is differentiable a.s$.$ in $(t,\omega)$ for all $j,k\in 1,\dots,N$, we introduce the matrix valued process $g_t$ defined as $g_t^{jk}=d\langle X^j,X^k\rangle_t/dt$. In \textbf{Section \ref{3}} we prove a multidimensional extension of occupation time formula inspired by coarea formula in geometric measure theory. A particular but relevant case of Theorem \ref{Thm occupation time formula localized} is the following statement.

\begin{theorem}\label{general occup}
Assume that $cI_n\le g_t\le CI_N$ a.s$.$ in $(t,\omega)$ for some constants $C>c>0$. Let $\phi\in C^2(\mathbb R^N)$ be a function such that $\inf_{x\in A} |\nabla\phi(x)|>0$ on an open set $A\subseteq\mathbb R^N$. Then there exists a random bounded compact support non-negative function $\mathcal{L}_{i,A,\phi}^{a}$ and random probability measures $Q_{A}^{i}\left(a,dx\right)$, $Q_{A,\phi}^{i}\left(  a,\cdot\right)  $ concentrated on $\Gamma_{a}=\left\{  x\in A:\phi\left(  x\right)  =a\right\}  $ for a.e$.$ $a\in\mathbb{R}$, such that%
\begin{align}\label{occupation time formula introduction}
\int_{0}^{T}1_{X_{t}\in A}f\left(  X_{t}\right)  d\left\langle X^{i}\right\rangle _{t}=\int_{\mathbb{R}}\left(  \int_{\Gamma_{a}}f\left(  x\right)  Q_{A,\phi}^{i}\left(  a,dx\right)  \right)  \mathcal{L}_{i,A,\phi}^{a}da.
\end{align}
\end{theorem}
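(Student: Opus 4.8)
The plan is to reduce the claim to a disintegration of the occupation measure of $\langle X^{i}\rangle$ along the level sets $\{\phi=a\}$, the absolute continuity of the resulting base measure being furnished by the classical one–dimensional occupation time formula applied to the scalar semimartingale $Y_{t}:=\phi(X_{t})$. Since $\phi\in C^{2}(\mathbb{R}^{N})$, It\^{o}'s formula shows that $Y$ is a continuous real semimartingale with
\begin{equation*}
\langle Y\rangle_{t}=\int_{0}^{t}\nabla\phi(X_{s})^{\top}g_{s}\,\nabla\phi(X_{s})\,ds=:\int_{0}^{t}\psi_{s}\,ds .
\end{equation*}
Because $X$ has continuous paths it remains a.s.\ in a random compact set $K$, on which $|\nabla\phi|$ is bounded; combined with $cI_{N}\le g_{s}\le CI_{N}$ and $\inf_{A}|\nabla\phi|>0$ this gives, on the event $\{X_{s}\in A\}$, two–sided bounds $0<m(\omega)\le\rho_{s}:=g_{s}^{ii}/\psi_{s}\le M(\omega)<\infty$. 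In particular, on $\{X_{s}\in A\}$ the measures $d\langle X^{i}\rangle_{s}=g_{s}^{ii}\,ds$ and $d\langle Y\rangle_{s}=\psi_{s}\,ds$ are mutually absolutely continuous with Radon--Nikodym density bounded above and below by random positive constants.

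Next I would introduce, for a positive Borel $f$, the finite random Borel measure $\nu^{i}$ on $\mathbb{R}^{N}$ given by $\int f\,d\nu^{i}:=\int_{0}^{T}1_{X_{t}\in A}f(X_{t})\,d\langle X^{i}\rangle_{t}$; it is concentrated on $A\cap K$, hence of compact support, with total mass $\le\langle X^{i}\rangle_{T}<\infty$. Let $\lambda^{i}:=\phi_{*}\nu^{i}$ be its image on $\mathbb{R}$. For positive Borel $h$,
\begin{equation*}
\int_{\mathbb{R}}h\,d\lambda^{i}=\int_{0}^{T}1_{X_{t}\in A}\,h(Y_{t})\,\rho_{t}\,d\langle Y\rangle_{t}\le M(\omega)\int_{0}^{T}h(Y_{t})\,d\langle Y\rangle_{t}=M(\omega)\int_{\mathbb{R}}h(a)\,L_{T}^{a}(Y)\,da ,
\end{equation*}
the last equality being the one–dimensional occupation time formula for $Y$. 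Hence $\lambda^{i}\ll da$, with density $0\le\mathcal{L}_{i,A,\phi}^{a}\le M(\omega)L_{T}^{a}(Y)$; since $a\mapsto L_{T}^{a}(Y)$ is right–continuous with left limits and has compact support (inside $\phi(K)$) it is a.s.\ bounded, so $\mathcal{L}_{i,A,\phi}^{a}$ is a.s.\ non-negative, bounded, and of compact support.

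I would then invoke the disintegration theorem for the finite measure $\nu^{i}$ on the Polish space $\mathbb{R}^{N}$ along the Borel map $\phi$: there is a $\lambda^{i}$–a.e.\ unique Borel family of probability measures $Q_{A,\phi}^{i}(a,\cdot)$ with $Q_{A,\phi}^{i}\big(a,\phi^{-1}(a)\big)=1$ and $\nu^{i}(dx)=Q_{A,\phi}^{i}(a,dx)\,\lambda^{i}(da)$. Because $\nu^{i}(A^{c})=0$, one may further arrange $Q_{A,\phi}^{i}(a,A)=1$ for $\lambda^{i}$–a.e.\ $a$, i.e.\ $Q_{A,\phi}^{i}(a,\cdot)$ is carried by $\Gamma_{a}=A\cap\{\phi=a\}$ for a.e.\ $a\in\mathbb{R}$. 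Substituting $\lambda^{i}(da)=\mathcal{L}_{i,A,\phi}^{a}\,da$ into $\int f\,d\nu^{i}=\int_{\mathbb{R}}\big(\int f\,dQ_{A,\phi}^{i}(a,\cdot)\big)\lambda^{i}(da)$ yields exactly \eqref{occupation time formula introduction}; the companion probabilities $Q_{A}^{i}(a,\cdot)$ are obtained by the same disintegration relative to the intrinsic, coarea–type normalisation used in Theorem \ref{Thm occupation time formula localized}.

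The step that needs genuine care is measurability in $\omega$: the construction above is carried out pathwise, and one still has to check that $\mathcal{L}_{i,A,\phi}^{a}$ and $Q_{A,\phi}^{i}(a,dx)$ admit versions jointly measurable in $(\omega,a,x)$. I would settle this by realising the disintegration through the explicit limit $Q_{A,\phi}^{i}(a,\cdot)=\lim_{\varepsilon\downarrow0}\nu^{i}(\cdot\cap\phi^{-1}((a-\varepsilon,a+\varepsilon)))/\lambda^{i}((a-\varepsilon,a+\varepsilon))$, valid $\lambda^{i}$–a.e.\ by Lebesgue differentiation, and observing that $\nu^{i}$ depends measurably on $\omega$ through the stochastic and pathwise integrals defining it, a dependence preserved by the set operations and limits involved. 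This reduction is precisely the specialisation of Theorem \ref{Thm occupation time formula localized} to the foliation of $A$ by the level sets of $\phi$ that the present statement instantiates.
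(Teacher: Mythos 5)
Your argument is correct and is essentially the paper's own proof unfolded: the paper deduces the theorem in one line from Corollary \ref{cor ipo} (which verifies that $\phi(X)$ controls $X$ in quadratic variation on $A$, i.e.\ exactly your ratio bound $\rho_s\le M$) and Theorem \ref{Thm occupation time formula localized} (which performs the Rohlin disintegration of $\nu^{i}$ and obtains the bounded density $\mathcal{L}^a_{i,A,\phi}$ by dominating the pushforward with $L_T^a(\phi(X))$, as you do). The only cosmetic difference is that you inline these two steps rather than invoking the paper's abstract ``controls in quadratic variation'' notion, and the paper isolates the joint measurability issue in its Proposition \ref{rohlin} whereas you address it at the end via the Lebesgue-differentiation characterisation of the disintegrating kernel.
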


We omit to denote the dependence of $\mathcal{L}_{i,A,\phi}^{a}$ also on $T$ and $X$ since they
will always be a priori given. The formula extends to $\int_{0}^{T}f\left(X_{t}\right)  d\left\langle X^{i},X^{j}\right\rangle _{t}$ by polarization.

Localization on a set $A$ is important in applications, since the non-degeneracy conditions may be too severe on the full space. In section \ref{2_2} we give more general conditions of non-degeneracy of $X$ and $\phi$ than those assumed in Theorem \ref{general occup}, and accordingly, Theorem \ref{Thm occupation time formula localized} is more general; this additional generality has applications to certain singular problems, as described in Sections \ref{Sect applications} and \ref{Sect embed}.

As we said, formula (\ref{occupation time formula introduction}) has the
flavor of coarea formula in geometric measure theory. However, the classical
coarea formula disintegrates Lebesgue measure along a foliation and the
measures $Q\left(  a,dx\right)  $ on the leaves are the $\left(  N-1\right)
$-dimensional Hausdorff measures. Here we disintegrate a random measure
$\mu_{i}$, defined by
\[
\mu_{i}\left(  f\right)  =\int_{0}^{T}f\left(  X_{t}\right)  d\left\langle
X^{i}\right\rangle _{t}%
\]
which is singular with respect to Lebesgue measure, for $N\geq2$. Thus the
measures $Q^{i}_{A,\phi}\left(  a,dx\right)  $ do not have good regularity properties.

The key point is, on the contrary, that the measure in the ``transversal''
direction\ to the leaves, $\mathcal{L}_{i,A,\phi}^{a}da$, has a density with respect
to Lebesgue measure. This fact is false if we replace $X$ by a smooth
function: the occupation measure of a smooth deterministic function may
concentrate at some point, and the transversal density $\mathcal{L}^{a}$ is
lost. Thus it is the \textit{regularity of fluctuations} of a semimartingale
with good non-trivial quadratic variation that produces the densities
$\mathcal{L}_{i,A,\phi}^{a}$, similarly to the existence of the local time in
dimension 1. Formula (\ref{occupation time formula introduction}) captures a
regularity property of the occupation measures $\mu_{i}$ of certain
semimartingales. Other regularity properties of occupation meausure are
reviewed for instance in \cite{Flandoli}. The regularity of
occupation measure is also a topic of interest in harmonic analysis, see for
instance \cite{TaoWright}.

By formula (\ref{occupation time formula introduction}), the integral
$\int_{0}^{T}\left\vert f\left(  X_{t}\right)  \right\vert d\left\langle
X^{i}\right\rangle _{t}$ is finite for functions $f$ that are singular along
an $\left(  N-1\right)  $-dimensional manifold $\Gamma$, with a certain degree
of integrability of the singularity. In \textbf{Section \ref{Sect applications}}, devoted to examples and applications, we call this class of functions $L^1_{loc}(\Gamma^\bot)$ and we prove a general result of integrability.

\begin{theorem}\label{integ}
Let $X$ be a continuous semimartingale in $\mathbb{R}^{N}$ such that $cI_n\le g_t\le CI_N$ a.s$.$ in $(t,\omega)$ for some constants $C>c>0$. Let $\Gamma$ be an $(N-1)$-dimensional orientable manifold of class $C^{2}$, closed and without boundary, embedded in $\mathbb R^N$. If $f\in L^1_{loc}(\Gamma^\bot)$ then $P\left(  \int_{0}^{T}\left\vert f\left(  X_{t}\right)  \right\vert dt<\infty\right)  =1$.
\end{theorem}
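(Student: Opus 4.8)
The plan is to reduce the global integrability statement to the local occupation-time formula of Theorem \ref{general occup} via a covering argument, and then exploit the structure of the distance function near $\Gamma$ to identify the relevant singular behavior.

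\textbf{Step 1: Localization via a tubular neighborhood and partition of unity.} Since $\Gamma$ is a compact $C^2$ manifold without boundary embedded in $\mathbb R^N$, it admits a tubular neighborhood $U$ on which the signed (or unsigned) distance function $d_\Gamma(x) = \mathrm{dist}(x,\Gamma)$ is $C^2$ and satisfies $|\nabla d_\Gamma| = 1$; in particular $\inf_{x\in U}|\nabla d_\Gamma(x)| > 0$. The definition of $L^1_{loc}(\Gamma^\bot)$ (given in Section \ref{Sect applications}) should be precisely that $f$ restricted to lines/curves transversal to $\Gamma$ is locally integrable in the distance variable; concretely, writing points of $U$ in Fermi coordinates $x \leftrightarrow (y,a)$ with $y\in\Gamma$, $a = d_\Gamma(x)$, the hypothesis $f\in L^1_{loc}(\Gamma^\bot)$ means $\int_{-\epsilon}^{\epsilon}\sup_{y}|f(y,a)|\,da < \infty$ for some $\epsilon>0$, or a suitable uniform-over-$y$ variant. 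Outside $U$ (and away from any other singularities — by hypothesis $f$ is only singular along $\Gamma$), $f$ is locally bounded, so cover $\mathbb R^N$ by $U$ together with a locally finite family of balls $B_k$ on which $f$ is bounded. Since $X$ spends time in only boundedly many such sets over $[0,T]$ (the path is continuous hence bounded a.s.), it suffices to bound $\int_0^T 1_{X_t\in V}|f(X_t)|\,dt$ for $V$ equal to each $B_k$ (trivial, as $f$ is bounded there and $t\mapsto t$ has finite total mass $T$) and for $V$ a relatively compact open subset $A\Subset U$ (the interesting case).

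\textbf{Step 2: Apply the occupation time formula with $\phi = d_\Gamma$.} On $A\Subset U$ we have $\phi := d_\Gamma \in C^2$ with $\inf_A|\nabla\phi|>0$, and by hypothesis $cI_N\le g_t\le CI_N$. Theorem \ref{general occup} (applied with, say, $i$ fixed and then summed, or directly in the $dt$ form — note $d\langle X^i\rangle_t = g_t^{ii}\,dt$ with $c\le g_t^{ii}\le C$, so $\int_0^T 1_{X_t\in A}|f(X_t)|\,dt \le c^{-1}\int_0^T 1_{X_t\in A}|f(X_t)|\,d\langle X^i\rangle_t$) gives
\begin{equation*}
\int_0^T 1_{X_t\in A}|f(X_t)|\,d\langle X^i\rangle_t = \int_{\mathbb R}\Bigl(\int_{\Gamma_a}|f(x)|\,Q^i_{A,\phi}(a,dx)\Bigr)\mathcal L^a_{i,A,\phi}\,da,
\end{equation*}
where $\Gamma_a = \{x\in A : d_\Gamma(x) = a\}$. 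Because $\mathcal L^a_{i,A,\phi}$ is a.s. bounded with compact support — say $\mathcal L^a_{i,A,\phi}\le M(\omega)$ for $a$ in a bounded set $J(\omega)$ — and each $Q^i_{A,\phi}(a,\cdot)$ is a probability measure on $\Gamma_a$, we estimate the right-hand side by
\begin{equation*}
M(\omega)\int_{J(\omega)}\sup_{x\in\Gamma_a}|f(x)|\,da.
\end{equation*}
The set $\Gamma_a$ is contained in the level set $\{d_\Gamma = a\}$, i.e. at distance exactly $|a|$ from $\Gamma$; in Fermi coordinates $\sup_{x\in\Gamma_a}|f(x)| \le \sup_{y\in\Gamma}|f(y,a)|$, which is the quantity appearing in the definition of $L^1_{loc}(\Gamma^\bot)$. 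Hence the integral is finite, and therefore $\int_0^T 1_{X_t\in A}|f(X_t)|\,dt < \infty$ a.s.

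\textbf{Step 3: Reassemble.} Summing the finitely many contributions (the balls $B_k$ actually visited plus finitely many pieces $A$ covering $X([0,T])\cap U$) yields $\int_0^T|f(X_t)|\,dt < \infty$ with probability one.

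\textbf{Main obstacle.} The delicate point is matching the abstract measures $Q^i_{A,\phi}(a,dx)$ produced by Theorem \ref{general occup} to the geometric control coming from $f\in L^1_{loc}(\Gamma^\bot)$: one must be sure the level sets $\Gamma_a = \{d_\Gamma = a\}\cap A$ genuinely foliate a neighborhood of $\Gamma$ by hypersurfaces on which "the singularity of $f$ has been integrated out," and that the transversal variable $a$ in the formula coincides (up to the Lipschitz equivalence $|\nabla d_\Gamma|=1$) with the distance-to-$\Gamma$ parametrization used to define $L^1_{loc}(\Gamma^\bot)$. This requires the standard tubular-neighborhood theorem for $C^2$ embedded compact hypersurfaces together with a careful statement of the function class; assuming these (as set up in Section \ref{Sect applications}), the rest is the boundedness of $\mathcal L^a$ and the probability-measure normalization of $Q^i_{A,\phi}$, both handed to us by Theorem \ref{general occup}. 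A secondary point is the a.s.\ finiteness of the number of covering sets the path visits, which follows from pathwise continuity and compactness of $[0,T]$.
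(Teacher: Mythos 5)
Your overall strategy matches the paper's: split the occupation measure into a piece concentrated near $\Gamma$ and the remainder, handle the remainder by pathwise boundedness of $X$ plus local boundedness of $f$ away from $\Gamma$, and control the near-$\Gamma$ piece via Theorem \ref{general occup}, the boundedness of $\mathcal L^a_{i,A,\phi}$, the probability normalization of $Q^i_{A,\phi}(a,\cdot)$, and the integrability hypothesis $\int_0^1 M_{a,R}(|f|)\,da<\infty$. The reduction $d\langle X^i\rangle_t = g_t^{ii}\,dt$ with $c\le g_t^{ii}\le C$ is a fine way to pass between $dt$ and $d\langle X^i\rangle_t$.

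There is, however, a concrete flaw in Step 2: you set $\phi := d_\Gamma = \mathrm{dist}(\cdot,\Gamma)$ and then invoke Theorem \ref{general occup}, which requires $\phi\in C^2(\mathbb R^N)$ and $\inf_A|\nabla\phi|>0$. The \emph{unsigned} distance is not $C^2$ on any neighborhood of $\Gamma$ — it is only Lipschitz across $\Gamma$ (the gradient flips from $+\nu$ to $-\nu$), and $\nabla\phi$ is undefined on $\Gamma$ itself — so it does not satisfy the hypotheses. The fix, and the route the paper takes (Corollary \ref{trivial}), is to use the \emph{signed} distance $\delta_\Gamma$, which by Proposition \ref{def dist sign} is $C^2$ on a neighborhood $\mathcal U$ of $\Gamma$ with $|\nabla\delta_\Gamma|=1$, and extend it to $\phi\in C^2(\mathbb R^N)$; then $A=\mathcal V\cap\{|d(\cdot,\Gamma)|<1\}$ and the level sets $\Gamma_a=\{\phi=a\}\cap A$ and $\Gamma_{-a}$ together make up $\{d(\cdot,\Gamma)=a\}\cap A$, so $\sup_{\Gamma_{\pm a}}|f|\le M_{a,R}(|f|)$ and your estimate goes through as written. (The unsigned distance can also be handled via the ``good extension'' of Section \ref{goodex}, but Lemma \ref{newlemma} establishes the control-in-quadratic-variation property only for Brownian semimartingales, whereas here $X$ is merely uniformly elliptic.) Two smaller points: you assume $\Gamma$ compact, which the theorem does not — but this is harmless because you, like the paper, in effect confine attention to the ball $B(0,R_\omega)$ containing the path; and the paper's definition of $L^1_{loc}(\Gamma^\bot)$ explicitly includes ``$f$ bounded on compact subsets of $\mathbb R^N\setminus\Gamma$,'' which is exactly what justifies the bound on the far-from-$\Gamma$ piece — you invoke this informally, so make it explicit.
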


This result is, for some applications, more precise than the results offered by other approaches, in which the integrability degree of $f$ is related to the dimension $N$, see Remark \ref{R integ}.

A natural question is whether $\mathcal{L}_{i,A,\phi}^{a}$ is equal to the local time of some
1-dimensional semimartingale. When this holds true, the process $a\mapsto\mathcal{L}_{i,A,\phi}%
^{a}$ has a c\`{a}dl\`{a}g modification. Another question is whether
$\mathcal{L}_{i,A,\phi}^{a}$ is a sort of local time of $X$ at the leave
$\Gamma_{a}$. We answer these two questions in a very particular case: when $\phi$ is locally the distance from a given manifold $\Gamma$ and $X$ is a
Brownian semimartingale, namely when $X$ is continuous and $g=I_N$. In \textbf{Section \ref{Sect embed}}, denoted the identical
$\mathcal{L}_{i,A,\phi}^{a}$ by $\mathcal{L}_{A,\phi}^{a}$, we prove its local representation as the (symmetric) local time of an 1-dimensional semimartingale.

\begin{theorem}
Let $X$ be a Brownian semimartingale in $\mathbb{R}^{N}$. Let $\Gamma$ be an $(N-1)$-dimensional orientable manifold of class $C^{2}$, closed and without boundary, embedded in $\mathbb R^N$. Then, for the distance function $d\left(  \cdot,\Gamma\right)  $, there
exists a neighborhood $\mathcal{V}$ of $\Gamma$ and an extension $\phi$
outside $\mathcal{V}$ such that the assumptions of Theorem \ref{Thm occupation time formula localized} are satisfied
with $A=\mathcal{V}$, and additionally, we have that for each fixed $\omega\in\Omega$ there exists $\varepsilon_1(\omega)>0$ such that%
\begin{equation*}
\mathcal{L}_{A,\phi}^{a}=\tilde L_{T}^{a}\left(\phi\left(X\right)\right)
\end{equation*}
for a.e. $a<\varepsilon_1(\omega)$, and they are both null if $a<0$. In particular, on the random interval $(-\infty,\varepsilon_1]$ the process $(\omega,a)\mapsto \mathcal L^a_{A,\phi}(\omega)$ is the modification of a c\`{a}dl\`{a}g process.
\end{theorem}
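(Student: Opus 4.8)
The plan is to work locally near $\Gamma$, where the distance function $d(\cdot,\Gamma)$ is $C^2$ (by the tubular neighborhood theorem applied to a $C^2$ closed orientable hypersurface), and to identify $\mathcal{L}_{A,\phi}^{a}$ by comparing the occupation time formula \eqref{occupation time formula introduction} with the classical one-dimensional occupation time formula applied to the real semimartingale $Y_t := \phi(X_t)$. First I would fix the geometry: choose a neighborhood $\mathcal V$ of $\Gamma$ on which $d(\cdot,\Gamma)$ is $C^2$ with $|\nabla d| \equiv 1$, let $\phi$ agree with $d(\cdot,\Gamma)$ on $\mathcal V$ and extend it to a global $C^2$ function outside $\mathcal V$ (a standard cutoff/partition-of-unity argument, keeping $\nabla\phi$ nonvanishing on $\mathcal V$ since $|\nabla\phi|=1$ there). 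Since $X$ is a Brownian semimartingale, $g = I_N$ satisfies $I_N \le g_t \le I_N$, and $\inf_{x\in\mathcal V}|\nabla\phi(x)|=1>0$, so the hypotheses of Theorem \ref{Thm occupation time formula localized} hold with $A=\mathcal V$; this gives the existence of $\mathcal{L}_{A,\phi}^{a}$ and the disintegration. The relevant $i$-independence (so that $\mathcal{L}_{i,A,\phi}^a=\mathcal{L}_{A,\phi}^a$) I would either quote from the localized theorem or recover from $g=I_N$.

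Next I would apply It\^o's formula to $Y_t=\phi(X_t)$ to get that $Y$ is a continuous semimartingale, and compute $d\langle Y\rangle_t = |\nabla\phi(X_t)|^2\, d\langle \text{(BM part)}\rangle$-type expression; on the event $\{X_t\in\mathcal V\}$ this is $d\langle Y\rangle_t = \sum_{j,k}\partial_j\phi(X_t)\partial_k\phi(X_t)\,g_t^{jk}\,dt = |\nabla\phi(X_t)|^2\,dt = dt$, using $g=I_N$ and $|\nabla\phi|=1$ on $\mathcal V$. For a fixed $\omega$, continuity of paths gives a random $\varepsilon_1(\omega)>0$ such that $X_t \in \mathcal V$ whenever $\phi(X_t) = Y_t < \varepsilon_1(\omega)$ over $[0,T]$ — here I would use that $\{d(\cdot,\Gamma)<\varepsilon_1\}\subseteq\mathcal V$ for $\varepsilon_1$ small, so sublevel sets of $\phi$ below that height sit inside $\mathcal V$, and possibly take $\varepsilon_1(\omega)$ even smaller so that no spurious excursions of $Y$ below $\varepsilon_1$ come from the region where $\phi\ne d(\cdot,\Gamma)$. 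The classical (symmetric) one-dimensional occupation time formula then reads $\int_0^T h(Y_t)\,d\langle Y\rangle_t = \int_{\mathbb R} h(a)\,\tilde L_T^a(Y)\,da$ for nonnegative Borel $h$.

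Then I would equate the two. Taking $f$ of the form $f = (h\circ\phi)\,\mathbf 1_{\{\phi<\varepsilon_1\}}$ in \eqref{occupation time formula introduction}: the left side is $\int_0^T \mathbf 1_{X_t\in A} h(\phi(X_t))\mathbf 1_{\phi(X_t)<\varepsilon_1}\,d\langle X^i\rangle_t$, which by the path-confinement above equals $\int_0^T h(Y_t)\mathbf 1_{Y_t<\varepsilon_1}\,dt = \int_0^T h(Y_t)\mathbf 1_{Y_t<\varepsilon_1}\,d\langle Y\rangle_t = \int_{\mathbb R} h(a)\mathbf 1_{a<\varepsilon_1}\tilde L_T^a(Y)\,da$. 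The right side of \eqref{occupation time formula introduction}, since $f$ is constant $=h(a)$ on each leaf $\Gamma_a$ (as $\phi\equiv a$ there) and $Q_{A,\phi}^i(a,\cdot)$ is a probability measure, equals $\int_{\mathbb R} h(a)\mathbf 1_{a<\varepsilon_1}\,\mathcal{L}_{A,\phi}^a\,da$. Since this holds for all nonnegative Borel $h$, the two densities agree for a.e.\ $a<\varepsilon_1(\omega)$, i.e.\ $\mathcal{L}_{A,\phi}^a = \tilde L_T^a(\phi(X))$. Nullity for $a<0$ follows because $\phi = d(\cdot,\Gamma)\ge 0$ on $\mathcal V$ and one can arrange $\phi\ge 0$ everywhere (or at least that $\Gamma_a\cap A=\emptyset$ for $a<0$), so both the occupation measure and $\tilde L_T^a(Y)$ vanish there. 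The càdlàg claim is then immediate from the known regularity of $a\mapsto \tilde L_T^a$ of a continuous semimartingale.

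The main obstacle I expect is the careful bookkeeping around $\varepsilon_1(\omega)$: one must ensure simultaneously that (i) $\{\phi<\varepsilon_1\}\subseteq\mathcal V$ so that It\^o's computation gives $d\langle Y\rangle_t=dt$ on the relevant time set, (ii) the extension $\phi$ outside $\mathcal V$ does not create extra level sets $\Gamma_a$ with $a<\varepsilon_1$ that would contaminate the right-hand side of \eqref{occupation time formula introduction}, and (iii) the indicator $\mathbf 1_{\phi(X_t)<\varepsilon_1}$ correctly captures exactly the excursions of $Y$ relevant to $\tilde L_T^a$ for $a<\varepsilon_1$. Since $\varepsilon_1$ depends on $\omega$ (through the compactness of the path $\{X_t:t\in[0,T]\}$ and its distance to $\mathcal V^c$), the identification is inherently pathwise; making the joint measurability of $(\omega,a)\mapsto\mathcal{L}_{A,\phi}^a$ compatible with this random truncation — and deducing a genuine càdlàg modification on the random interval $(-\infty,\varepsilon_1]$ — requires a small but careful argument, essentially gluing the pathwise identifications together using that $\tilde L^a_T(\phi(X))$ is already a nice process in $(\omega,a)$.
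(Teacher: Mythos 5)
There is a genuine gap at the very start of your construction, and it propagates through the argument. You write that you would ``choose a neighborhood $\mathcal V$ of $\Gamma$ on which $d(\cdot,\Gamma)$ is $C^2$'' and then extend $\phi=d(\cdot,\Gamma)$ from $\mathcal V$ to a global $C^2$ function. But the (unsigned) distance $d(\cdot,\Gamma)$ is \emph{not} $C^2$ on any neighborhood of $\Gamma$: near $\Gamma$ it equals $|\delta_\Gamma|$, the absolute value of the signed distance, and it has a kink along $\Gamma$ exactly like $|x|$ at $0$. The tubular neighborhood theorem gives $C^2$ regularity of the \emph{signed} distance $\delta_\Gamma$, not of $d(\cdot,\Gamma)$. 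Consequently: you cannot extend $d(\cdot,\Gamma)$ to a global $C^2$ function without altering it on $\mathcal V$; you cannot invoke It\^o's formula for $Y_t=\phi(X_t)$; and you cannot cite Corollary \ref{cor ipo} (which requires $\phi\in C^2$) nor conclude non-degeneracy from $\inf_{\mathcal V}|\nabla\phi|>0$, since $\nabla\phi$ is undefined on $\Gamma\subset\mathcal V$. Note also that you cannot quietly replace $d(\cdot,\Gamma)$ by a $C^2$ extension of $\delta_\Gamma$ (vanishing outside $\mathcal U$, say): this is precisely the extension the paper discusses and discards in the preamble to Section \ref{Sect embed}, because then the sublevel sets $\{\phi<\varepsilon\}$ pick up spurious contributions from far away, and the key geometric identity (property (iii) of Proposition \ref{good}, which you yourself flag as obstacle (ii)) fails; in addition the statement that both quantities vanish for $a<0$ would be lost, since $\delta_\Gamma$ takes negative values.

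The paper resolves this with the ``good extension'' of Proposition \ref{good}: it takes $\phi=|f|$ near $\Gamma$ with $f=\theta\,\delta_\Gamma\in C^2$ for a cutoff $\theta$, so that $\phi(X)$ is a semimartingale via It\^o--Tanaka rather than It\^o; the quadratic variation and non-degeneracy on $\mathcal V$ are established in Lemma \ref{newlemma} using the Lipschitz-local Definition \ref{definition non-degenerate}, together with the (nontrivial) fact that a Brownian semimartingale spends zero Lebesgue time on $\Gamma$, so the a.e.-differentiability of $\phi$ suffices. Once those technical ingredients are in place, your strategy for the identification — fix $\omega$, use the compactness of the path to produce $\varepsilon_1(\omega)$ with $\{\phi<\varepsilon_1\}\cap B=\{d(\cdot,\Gamma)<\varepsilon_1\}\cap B$, reduce to the one-dimensional occupation time formula for $\phi(X)$, and compare densities — is correct and matches the paper's; the paper just executes the comparison via the Lebesgue-density formula $\mathcal L^a_{A,\phi}=\lim_{\varepsilon\to0}\frac{1}{2\varepsilon}\int_0^T 1_{X_t\in A}1_{(a-\varepsilon,a+\varepsilon)}(\phi(X_t))\,dt$, which is marginally cleaner than your test-function argument but equivalent. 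So the gap is not in the final identification, but in the regularity and semimartingale setup for $\phi=d(\cdot,\Gamma)$.
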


In \textbf{Section \ref{2}} we define the random variable $L_{T}^{\Gamma_{a}}\left(  X\right)$ and we call it the
\textit{geometric local time of }$X$\textit{ at }$\Gamma_{a}$\textit{ on}
$\left[  0,T\right]$. In the case $\Gamma$ has an uniform neighborhood in which the distance its regular (see Section \ref{unif}), we prove an additional representation of $\mathcal{L}^{a}_{A,\phi}$ as the geometrical local time at the leave $\Gamma_{a}$.

\begin{theorem}
Under the hypotheses of Theorem \ref{Thm L=L=L}, if there exists $\varepsilon>0$ such that $\mathcal V=\left\{  x\in\mathbb{R}^{N}:d\left(  x,\Gamma\right)  <\varepsilon\right\} $, then we have
\begin{equation*}
\mathcal{L}_{A,\phi}^{a}=L_{T}^{\Gamma_{a}}\left(  X\right)
\end{equation*}
for a.e. $a\in\left[ 0,\varepsilon_{0}\right)  $, where
$\Gamma_{a}=\left\{  x\in\mathcal V:d(x,\Gamma)=a\right\}$.
\end{theorem}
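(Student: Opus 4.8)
The plan is to combine two representations established earlier in the excerpt: the identification $\mathcal{L}^{a}_{A,\phi} = \tilde L^{a}_{T}(\phi(X))$ (the symmetric local time of the one-dimensional semimartingale $\phi(X)$) from Theorem \ref{Thm L=L=L}, and the definition of the geometric local time $L^{\Gamma_a}_T(X)$ from Section \ref{2}. In the present hypothesis we are in the favorable situation where $\mathcal V$ is a genuine tubular neighborhood $\{d(\cdot,\Gamma)<\varepsilon\}$, so on $\mathcal V$ the function $\phi$ agrees with the signed (or unsigned, for $a>0$) distance $d(\cdot,\Gamma)$, which is genuinely $C^2$ on all of $\mathcal V$ with non-vanishing gradient of constant unit length $|\nabla d|\equiv 1$. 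The uniformity of $\varepsilon$ removes the $\omega$-dependent threshold $\varepsilon_1(\omega)$ appearing in Theorem \ref{Thm L=L=L}: the representation $\mathcal{L}^a_{A,\phi}=\tilde L^a_T(\phi(X))$ now holds for a.e.\ $a\in[0,\varepsilon_0)$ with a deterministic $\varepsilon_0\le\varepsilon$, because the estimates driving that theorem (the coarea disintegration and the non-degeneracy $cI_N\le g_t\le CI_N$) are uniform over the tube.

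The core of the argument is then to show $\tilde L^a_T(\phi(X)) = L^{\Gamma_a}_T(X)$ for a.e.\ $a\in[0,\varepsilon_0)$. First I would write down Itô's formula for the real semimartingale $Y^a_t := d(X_t,\Gamma) = \phi(X_t)$ (valid since $\phi\in C^2$ on the tube and one localizes by the stopping time $X$ exits $\mathcal V$), obtaining $dY^a_t = \nabla\phi(X_t)\cdot dX_t + \tfrac12 \mathrm{Tr}(D^2\phi(X_t) g_t)\,dt$, whose martingale part has quadratic variation $d\langle Y\rangle_t = (\nabla\phi(X_t))^\top g_t \nabla\phi(X_t)\,dt$; with $g=I_N$ and $|\nabla\phi|=1$ this is simply $d\langle Y\rangle_t = dt$. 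Then I would recall the definition of the geometric local time $L^{\Gamma_a}_T(X)$ from Section \ref{2}: it should be defined (by analogy with the one-dimensional case) as an $L^2$ or a.s.\ limit of $\frac{1}{2\delta}\int_0^T \mathbf 1_{|d(X_t,\Gamma)-a|<\delta}\, d\langle Y\rangle_t$ or of the analogous expression with the occupation measure of the transversal coordinate, i.e.\ precisely the density at level $a$ of the pushforward of $d\langle Y\rangle$ under $t\mapsto d(X_t,\Gamma)$. By the one-dimensional occupation time formula applied to $Y^a = \phi(X)$, that density is exactly the symmetric local time $\tilde L^a_T(Y)=\tilde L^a_T(\phi(X))$, for a.e.\ $a$. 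This gives the claimed equality, and chaining with Theorem \ref{Thm L=L=L} yields $\mathcal{L}^a_{A,\phi} = L^{\Gamma_a}_T(X)$ a.e.\ on $[0,\varepsilon_0)$.

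The main obstacle I anticipate is reconciling the precise definition of $L^{\Gamma_a}_T(X)$ in Section \ref{2} with the transversal density appearing in the occupation time formula — in particular, checking that the normalization (the factor from $|\nabla d|$, the Jacobian of the change to Fermi/tubular coordinates, and any weight hidden in the probability kernel $Q^i_{A,\phi}$) matches exactly rather than up to a multiplicative function of $a$. Here the fact that $\phi$ is the \emph{exact} distance, so that the level sets $\Gamma_a$ are precisely the distance-$a$ hypersurfaces and $\nabla\phi$ is the unit normal field, is what forces all these factors to be $1$; without the uniform-tube hypothesis the extension $\phi$ would only be comparable to the distance and one would lose this exact matching. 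A secondary technical point is the passage from "for a.e.\ $a<\varepsilon_1(\omega)$" (random threshold) to "for a.e.\ $a\in[0,\varepsilon_0)$" (deterministic threshold): this requires checking that, in the uniform-tube case, the construction in Theorem \ref{Thm L=L=L} actually produces a deterministic lower bound on $\varepsilon_1$, which should follow by inspecting that proof and noting the only $\omega$-dependence there came from the (now absent) possibility that the distance function degenerates before $\varepsilon$. Finally one should note the trivial boundary behaviour: both sides vanish for $a<0$ and the statement is only asserted on $[0,\varepsilon_0)$, so no issue arises at the manifold $\Gamma=\Gamma_0$ itself beyond what the one-dimensional local time theory already handles.
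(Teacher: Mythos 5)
Your overall strategy---identify $\mathcal L^a_{A,\phi}$ with the symmetric local time of the one-dimensional semimartingale $\phi(X)$ and then identify that with the geometric local time at $\Gamma_a$---is the same as the paper's, and the computations you do ($d\langle\phi(X)\rangle_t=dt$ on the tube, the $\varepsilon$-limit representation of $\mathcal L^a_{A,\phi}$) are all used in the paper's proof. But there is a concrete gap in the bridge to $L_T^{\Gamma_a}(X)$: you replace the paper's actual definition of the geometric local time with a guess. The paper defines
\[
L_T^{\Gamma_a}(X) \;=\; \lim_{\varepsilon\to 0}\frac{1}{2\varepsilon}\int_0^T \mathbf 1_{[0,\varepsilon)}\bigl(d(X_t,\Gamma_a)\bigr)\,dt,
\]
that is, in terms of the distance to the \emph{leaf} $\Gamma_a$, whereas you write it in terms of $|d(X_t,\Gamma)-a|$, i.e.\ the deviation of the base distance from the level $a$. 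These are not the same quantity by definition, and their $\varepsilon$-level sets need not coincide a priori. The fact that they do coincide on the tube---precisely that $d(x,\Gamma_a)<\varepsilon$ is equivalent to $d(x,\Gamma)\in(a-\varepsilon,a+\varepsilon)$ for $x$ in the tubular neighborhood, once $\varepsilon<|a-\varepsilon_0|$---is a nontrivial geometric statement (Lemma \ref{orto1} and Corollary \ref{orto} in the paper, proved by triangle-inequality arguments with the projections $P_\Gamma$, $P_{\Gamma_a}$). Without it your chain is incomplete: the identity
\[
\mathbf 1_{[0,\varepsilon)}(d(x,\Gamma_a)) \;=\; \mathbf 1_{x\in A}\,\mathbf 1_{(a-\varepsilon,a+\varepsilon)}(\phi(x))
\]
is exactly the missing link, and it is not a normalization or Jacobian check (the factors of $|\nabla\phi|$ really are trivially $1$ here); it is a statement about nesting of the equidistant hypersurfaces that must be proved. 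You flag "reconciling the precise definition of $L_T^{\Gamma_a}$" as an obstacle, but diagnose it as a weight-matching problem rather than the actual geometric equivalence you need. Once Lemma \ref{orto1} is in hand, the rest of your argument goes through and matches the paper's proof (which, incidentally, does not even pass through $\tilde L^a_T(\phi(X))$ explicitly at this point but directly matches the $\varepsilon$-limit for $\mathcal L^a_{A,\phi}$ to the definition of $L^{\Gamma_a}_T$; the intermediate identification is equivalent).
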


Then we compare $L_{T}^{\Gamma_{a}}\left(  X\right)$ with the similar but different local time on graphs defined by Peskir
\cite{Peskir1},\cite{Peskir}. The research reported here has been strongly influenced by it. For the purpose of a generalized It\^{o} formula for $u\left(X_{t}\right)  $ where $u:\mathbb{R}^{N}\rightarrow\mathbb{R}$ is smooth except on a graph, the notion of \cite{Peskir1},\cite{Peskir} is very convenient. We have tried to apply that notion by local graph charts to our set-up, in order to avoid new definitions, but this turns out to be not easy and thus we prefer to develop the definition of $L_{T}^{\Gamma_{a}}\left(  X\right)$ from scratch in part inspired by \cite{Sato}. The definition given here is conceptually similar to \cite{Peskir} but it has two differences which may be of
interest: i) it treat $\left(  N-1\right)  $-dimensional manifolds which are
not necessarily global graphs, ii)\ it is intrinsic of the manifold and does
not depend on the coordinate system.

\section{The occupation time formula\label{3}}

\subsection{Disintegration of random measures}
Given a finite Borel measure $\mu$ on $\mathbb{R}^{N}$ and a Borel function
$\phi:\mathbb{R}^{N}\rightarrow\mathbb{R}$, set
$$\nu=\phi_{\sharp}\mu$$
the push-forward of $\mu$ under $\phi$ ($\nu=\mu\circ\phi^{-1}$). Then there
exists a family of probability measures $\left(  \mu_{a}\right)
_{a\in\mathbb{R}}$ on $\mathbb{R}^{N}$, uniquely determined for $\nu$-a.e.
$a\in\mathbb{R}$, called conditional probabilities of $\mu$ w.r.t. $\nu$, such that:

\begin{description}
\item[i)] for every Borel set $E$, $a\mapsto\mu_{a}\left(  E\right)  $ is measurable

\item[ii)]
\[
\mu\left(  f\right)  =\int_{\mathbb{R}}\left(  \int_{\Gamma_{a}}f\left(
x\right)  \mu_{a}\left(  dx\right)  \right)  \nu\left(  da\right)
\]
for all positive Borel functions $f:\mathbb{R}^{N}\rightarrow\mathbb{R}$,
where $\Gamma_{a}=\left\{  x\in\mathbb{R}^{N}:\phi\left(  x\right)
=a\right\}  $

\item[iii)] $\mu_{a}\left(  \Gamma_{a}\right)  =1$, for $\nu$.a.e. $a\in\mathbb{R}$.
\end{description}

This is a consequence of Rohlin disintegration theorem (see \cite{Simmons}, \cite{CaraDane} for a recent version and references therein).

Let $\left\{  \mu^{\omega}\text{; a.e. }\omega\in\Omega\right\}  $ be a random
finite Borel measure on $\mathbb{R}^{N}$, on a probability space $\left(
\Omega,\mathcal{F},P\right)$ that is universally measurable (see \cite{Simmons}); for instance, Polish spaces are universally measurable.

\begin{proposition}\label{rohlin}
The push-forward $\nu^{\omega}=\phi_{\sharp}\mu^{\omega}$ is a
random finite Borel measure on $\mathbb{R}$; moreover the family of probability measures $\left\{
\mu_{a}^{\omega}\text{; }\nu\text{-a.e. }a\in\mathbb{R}\text{, }P\text{-a.e.
}\omega\in\Omega\right\}  $ has the properties that for every Borel set $E$,
$\left(  a,\omega\right)  \mapsto\mu_{a}^{\omega}\left(  E\right)  $ is
measurable on $\left(  \mathbb{R}\times\Omega,\mathcal{B}\left(
\mathbb{R}\right)  \otimes\mathcal{F}\right)  $; and for $P$-a.e. $\omega
\in\Omega$, properties 2 and 3 above hold true.
\end{proposition}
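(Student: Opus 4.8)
The plan is to reduce the random statement to the deterministic Rohlin disintegration theorem applied $\omega$-by-$\omega$, and then to promote joint measurability in $(a,\omega)$ by a measurable-selection argument. First I would observe that for each fixed $\omega$ (outside a null set on which $\mu^\omega$ fails to be a finite Borel measure) the deterministic statement recalled before the proposition gives a family $(\mu_a^\omega)_{a\in\mathbb R}$ satisfying properties i)--iii) relative to $\nu^\omega=\phi_\sharp\mu^\omega$; the push-forward is manifestly a finite Borel measure on $\mathbb R$ with the same total mass as $\mu^\omega$, and $\omega\mapsto\nu^\omega(B)=\mu^\omega(\phi^{-1}(B))$ is measurable because $\omega\mapsto\mu^\omega$ is a random measure and $\phi^{-1}(B)$ is Borel, so the first claim (that $\nu^\omega$ is a random finite Borel measure) is immediate. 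The content is therefore entirely in the joint measurability of $(a,\omega)\mapsto\mu_a^\omega(E)$, since properties 2 and 3 then hold $\omega$-wise by construction.

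For the joint measurability I would not try to track the Rohlin construction through the parameter $\omega$; instead I would use its uniqueness together with an approximation/regularization that is explicitly jointly measurable. One clean route: disintegrate on the product space. Consider the finite measure $m$ on $\Omega\times\mathbb R^N$ defined (after normalizing, if $\mu^\omega$ has non-constant mass, by reweighting; or just assuming total mass is a fixed integrable function of $\omega$) by $m(d\omega,dx)=P(d\omega)\mu^\omega(dx)$, and the map $\Phi(\omega,x)=(\omega,\phi(x))$ from $\Omega\times\mathbb R^N$ to $\Omega\times\mathbb R$. Because $\Omega$ is universally measurable (Polish, say) and $\mathbb R^N$, $\mathbb R$ are Polish, the product spaces are Polish/standard, so the deterministic Rohlin theorem applies to $m$ and $\Phi$ and yields conditional probabilities $m_{(\omega,a)}(d\omega',dx)$ on $\Omega\times\mathbb R^N$, jointly measurable in $(\omega,a)$, concentrated on $\{\omega\}\times\Gamma_a$ for $\Phi_\sharp m$-a.e.\ $(\omega,a)$. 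Projecting onto the $\mathbb R^N$-coordinate gives a jointly measurable family $\mu_a^\omega(\cdot):=m_{(\omega,a)}(\Omega\times\cdot)$. Finally I would check, via the characterizing property ii) and a monotone-class argument in $f$ of the form $f(x)=g(\omega)h(x)$, that for $P$-a.e.\ $\omega$ the slice $a\mapsto\mu_a^\omega$ disintegrates $\mu^\omega$ over $\nu^\omega$; by the $\nu^\omega$-a.e.\ uniqueness in the deterministic theorem it must then coincide with the $\omega$-wise Rohlin family for $\nu^\omega$-a.e.\ $a$, which transfers properties 2 and 3 to it. Concentration on $\Gamma_a$ (property 3) is inherited because $m_{(\omega,a)}$ lives on $\Phi^{-1}(\omega,a)=\{\omega\}\times\Gamma_a$.

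The main obstacle I anticipate is handling the non-constant, $\omega$-dependent total mass $\mu^\omega(\mathbb R^N)$ cleanly: $m$ as written is only a $\sigma$-finite measure unless $\omega\mapsto\mu^\omega(\mathbb R^N)$ is bounded (or at least integrable after a fixed equivalent change of $P$), and Rohlin's theorem is stated for finite measures. I would deal with this either by the standard localization $\Omega=\bigcup_n\{n-1\le\mu^\omega(\mathbb R^N)<n\}$, applying the argument on each piece and pasting the resulting jointly measurable families, or by first replacing $P$ with the equivalent probability $d\tilde P\propto (1+\mu^\omega(\mathbb R^N))^{-1}dP$ and $\mu^\omega$ with $(1+\mu^\omega(\mathbb R^N))^{-1}\mu^\omega$, noting that conditional probabilities are unaffected by multiplying $\mu^\omega$ by an $\omega$-measurable constant and $\nu^\omega$ by the same constant. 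A secondary, more bookkeeping-type obstacle is verifying joint measurability of $(\omega,x)\mapsto\mu^\omega(dx)$ is genuinely enough to make $m$ a bona fide measure on the product $\sigma$-algebra; this is the standard fact that a kernel $\omega\mapsto\mu^\omega$ integrates against $P$ to a measure on $\mathcal F\otimes\mathcal B(\mathbb R^N)$, which I would simply cite.
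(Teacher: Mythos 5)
Your proposal takes essentially the same route as the paper: form the product measure $\mu^\omega\otimes P$ on $\mathbb{R}^N\times\Omega$, disintegrate it via the map $(x,\omega)\mapsto(\phi(x),\omega)$ using Rohlin's theorem, project the resulting conditionals onto the $\mathbb{R}^N$-coordinate to get joint measurability for free, and then verify that the projected family disintegrates $\mu^\omega$ over $\nu^\omega$ for $P$-a.e.\ $\omega$ (the paper does this last step by a direct argument with the exceptional sets $\Omega_1,\Omega_2$; your monotone-class plus a.e.-uniqueness argument is an equivalent way to close it). The one genuine refinement in your write-up is the explicit attention to finiteness of the product measure when $\omega\mapsto\mu^\omega(\mathbb{R}^N)$ is neither bounded nor $P$-integrable, a point the paper's proof glosses over; your suggested fixes (localizing on $\{n-1\le\mu^\omega(\mathbb{R}^N)<n\}$, or replacing $P$ by the equivalent $d\tilde P\propto(1+\mu^\omega(\mathbb{R}^N))^{-1}dP$) are both correct and compatible with the $\nu$-a.e.\ uniqueness of the conditional family.
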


\begin{proof}
The proof that $\nu^\omega$ is a random measure is a simple exercise. Then we could apply the previous result of Rohlin $\omega$-wise and construct the family $\mu^\omega_a$; properties ii) and iii) above would be obvious but not property i), the joint measurability of $(a,\omega)\mapsto\mu_a^\omega$. To overcome this problem, let us construct a jointly measurable $\mu_a^\omega$ by another procedure and then deduce the other properties. We define $M=\mu^\omega\otimes P$ on the product space $\mathbb R^N\times\Omega$, we consider the map $p:\mathbb R^N\times \Omega\mapsto \mathbb R\times \Omega$ defined as $p(x,\omega)=(\phi(x),\omega)=(\phi\otimes Id)(x,\omega)$, and we can apply the Rohlin disintegration theorem with respect to the partition $\left\{\Gamma_a\times\{\omega\},\ a\in\mathbb R,\ \omega\in\Omega\right\}$. We obtain a unique random family of measures $\left\{\tilde \mu^\omega_a\right\}$ concentrated on the sets of the partition such that
$$\int_{\mathbb R^N\times \Omega} f(x,\omega)dM=\int_{\mathbb R\times\Omega} \left(\int_{\Gamma_a\times\{\omega\}} f(x,\zeta)d\tilde\mu_a^ \omega\right)dp_\sharp M$$
for all positive measurable functions $f:\mathbb{R}^{N}\times\Omega\rightarrow\mathbb{R}$. Now we define the family $\mu^\omega_a$ as
$$\mu_a^\omega(B):=\tilde\mu_a^\omega(B\times\Omega)=\tilde\mu_a^\omega(B\times\{\omega\})$$
for each $\omega\in\Omega$, $a\in\mathbb R$ and $B\in \mathcal B(\mathbb R^N)$. We have to prove that it satisfies properties i) and ii); for $P$-a.e$.$ $\omega\in\Omega$ property iii) above is trivial. We have that the function $(a,\omega)\mapsto\tilde\mu^\omega_a(E)$ is measurable for each set $E\in \mathcal B(\mathbb R^N)\otimes\mathcal F$, and in particular for the sets like $B\times \Omega$, for each $B\in \mathcal B(\mathbb R^N)$: so it is jointly measurable also $(a,\omega)\mapsto \mu^\omega_a(B)$. Moreover,
$$p_\sharp M=(\phi\otimes Id)_\sharp (\mu^\omega\otimes P)=\nu^\omega\otimes P$$
so
$$\int_{\mathbb R^N\times \Omega} f(x,\omega)d\mu^\omega\otimes P=\int_{\mathbb R\times\Omega} \left(\int_{\Gamma_a} f(x,\omega)d\mu_a^\omega\right)d\nu^\omega\otimes P$$
that is the same of
$$E\left[\int_{\mathbb R^N\times \Omega} f(x,\omega)d\mu^\omega-\int_{\mathbb R\times\Omega} \left(\int_{\Gamma_a} f(x,\omega)d\mu_a^\omega\right)d\nu^\omega\right]=0.$$
We can define the event $\Omega_1=\left\{\omega\in\Omega:\ \int_{\mathbb R^N} f(x,\omega)d\mu^\omega>\int_{\mathbb R} \left(\int_{\Gamma_a} f(x,\omega)d\mu_a^\omega\right)d\nu^\omega\right\}$ and the random function $f_1(\omega,\cdot):=f(\omega,\cdot)$ if $\omega\in\Omega_1$ and zero otherwise. Then from the previous equation we have
$$E\left[\left|\int_{\mathbb R^N\times \Omega} f_1(x,\omega)d\mu^\omega-\int_{\mathbb R\times\Omega} \left(\int_{\Gamma_a} f_1(x,\omega)d\mu_a^\omega\right)d\nu^\omega\right|\right]=0$$
hence $P-$a.e$.$
$$\int_{\mathbb R^N\times \Omega} f_1(x,\omega)d\mu^\omega-\int_{\mathbb R\times\Omega} \left(\int_{\Gamma_a} f_1(x,\omega)d\mu_a^\omega\right)d\nu^\omega=0$$
so $\Omega_1$ is negligible. In the same way we can prove that $\Omega_2:=\Big\{\omega\in\Omega:\ \int_{\mathbb R^N} f(x,\omega)d\mu^\omega<\int_{\mathbb R} \left(\int_{\Gamma_a} f(x,\omega)d\mu_a^\omega\right)d\nu^\omega\Big\}$ is negligible too. We obtained that for $P-$a.e$.$ $\omega\in\Omega$ it must be
$$\int_{\mathbb R^N} f(x,\omega)d\mu^\omega=\int_{\mathbb R} \left(\int_{\Gamma_a} f(x,\omega)d\mu_a^\omega\right)d\nu^\omega.$$
In particular we have proved property ii) above, extended to all random test functions.
\end{proof}

\subsection{Non-degeneracy conditions}\label{2_2}
\begin{definition}
\label{def controlled}Given the continuous semimartingale $X$ in
$\mathbb{R}^{N}$, consider the random positive measure $\eta_{X}\left(
dt\right)  $ defined by%
\[
\int_{0}^{T}\varphi\left(  t\right)  \eta_{X}\left(  dt\right)  =\sum
_{i=1}^{N}\int_{0}^{T}\varphi\left(  t\right)  d\left\langle X^{i}%
\right\rangle _{t}+\sum_{i,j=1}^{N}\int_{0}^{T}\varphi\left(  t\right)
d\left\langle X^{i}+X^{j}\right\rangle _{t}%
\]
for all $\varphi\in C\left(  \left[  0,T\right]  \right)  $. Let $A$ be a
Borel set of $\mathbb{R}^{N}$. Let $\phi:\mathbb{R}^{N}\rightarrow\mathbb{R}$
be a Borel function. We say that $\phi\left(  X\right)  $ controls $X$ in
quadratic variation on the set $A$ if
\begin{description}
\item[i)] $\phi\left(  X\right)  $ is a continuous semimartingale

\item[ii)] there is a random constant $C_{A}>0$ such that
\[
\int_{0}^{T}1_{X_{t}\in A}\left\vert \varphi\left(  t\right)  \right\vert
\eta_{X}\left(  dt\right)  \leq C_{A}\int_{0}^{T}1_{X_{t}\in A}\left\vert
\varphi\left(  t\right)  \right\vert d\left\langle \phi\left(  X\right)
\right\rangle _{t}%
\]
for all $\varphi\in C\left(  \left[  0,T\right]  \right)  $.
\end{description}
\end{definition}

The condition that $\phi(X)$ controls $X$ in quadratic variation will be the
main assumption of the multidimensional occupation time formula of the next
section. Now we want to give a very general sufficient condition for it, that we
use several times in the paper.

By $Lip_{loc}\left(  A\right)  $ of an open set $A$ we denote the set
of locally Lipschitz continuous functions on $A$ and we recall that such
functions are differentiable almost everywhere. Moreover, we shall say that
$\left\langle X\right\rangle _{t}$ is Lipschitz continuous if, for each
$j,k=1,...,N$, there is an adapted process with bounded paths $g_{t}^{jk}$
such that, with probability one,
\[
\left\langle X^{j},X^{k}\right\rangle _{t}=\int_{0}^{t}g_{s}^{jk}ds.
\]
If $X$ solves a differential equation of the form $sX_t=b(t,X_t)dt+\sigma(t,X_t)dW_t$ then $g_t=\sigma\sigma^{T}$ and certain assumptions we
impose on $g_{t}$ correspond to usual non-degeneracy assumptions on $\sigma$.

\begin{definition}
\label{definition non-degenerate}Let $X$ be a continuous semimartingale in
$\mathbb{R}^{N}$ and $\phi:\mathbb{R}^{N}\rightarrow\mathbb{R}$ be a Borel
function such that $\phi\left(  X\right)  $ is a continuous semimartingale. Assume that $\left\langle X\right\rangle _{t}$ is Lipschitz continuous and set
$$g_{t}^{jk}=d\left\langle X^{j},X^{k}\right\rangle _{t}/dt$$
such that
$$d\langle \phi(X)\rangle_t=\sum_{i,j=1}^{N}\partial
_{j}\phi\left(  X_{t}\right)  \partial_{k}\phi\left(  X_{t}\right)  g_{t}%
^{jk}dt.$$
Let $A$ be an open set in $\mathbb{R}^{N}$ such that $\phi\in Lip_{loc}\left(  A\right)  $
and let $D_{\phi}\subset A$ be a set of full measure where $\phi$ is
differentiable; assume that, for $P$-a.e. $\omega\in\Omega$, for a.e.
$t\in\left[  0,T\right]  $ the property $X_{t}\left(  \omega\right)  \in A$
implies $X_{t}\left(  \omega\right)  \in D_{\phi}$. We say that $\left\langle
\phi\left(  X\right)  \right\rangle $ is non-degenerate on $A$ if
\[
\inf_{t\in\left[  0,T\right]  :X_{t}\in D_{\phi}}\sum_{i,j=1}^{N}\partial
_{j}\phi\left(  X_{t}\right)  \partial_{k}\phi\left(  X_{t}\right)  g_{t}%
^{jk}>0
\]
with probability one. Notice that this sum is always a non-negative quantity.
\end{definition}

\begin{proposition}
\label{Prop suff cond for control}Let $X$ be a continuous semimartingale in
$\mathbb{R}^{N}$ and $\phi:\mathbb{R}^{N}\rightarrow\mathbb{R}$ be a Borel
function, such that $\langle \phi(X)\rangle$ is non-degenerate on an open set $A$ as described in Definition
\ref{definition non-degenerate}. Then $\phi\left(  X\right)  $
controls $X$ in quadratic variation on $A$.
\end{proposition}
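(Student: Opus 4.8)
The argument is essentially elementary: I would reduce the claimed integral inequality to a pointwise comparison, valid for a.e.\ $t\in[0,T]$ and a.e.\ $\omega$, between the $dt$-densities of $\eta_X$ and of $\langle\phi(X)\rangle$ on the set $\{X_t\in A\}$. Since item i) of Definition \ref{def controlled} is already part of the standing hypotheses (it is assumed that $\phi(X)$ is a continuous semimartingale), only item ii) of that definition has to be produced.

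First I would rewrite both measures through $g_t$. Because $\langle X\rangle$ is Lipschitz continuous, $\langle X^j,X^k\rangle_t=\int_0^t g_s^{jk}\,ds$, so $d\langle X^i\rangle_t=g_t^{ii}\,dt$ and $d\langle X^i+X^j\rangle_t=(g_t^{ii}+2g_t^{ij}+g_t^{jj})\,dt$; hence $\eta_X(dt)=h_t\,dt$ with
\[
h_t:=\sum_{i=1}^N g_t^{ii}+\sum_{i,j=1}^N\bigl(g_t^{ii}+2g_t^{ij}+g_t^{jj}\bigr),
\]
a process that is nonnegative for a.e.\ $t$ (since $\eta_X$ is a positive measure) and whose paths are bounded, say by a random constant $K(\omega):=\sup_{t\le T}h_t<\infty$ a.s., because each $g^{jk}$ has bounded paths. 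On the other side, the identity for $d\langle\phi(X)\rangle_t$ postulated in Definition \ref{definition non-degenerate} reads $d\langle\phi(X)\rangle_t=q_t\,dt$ with $q_t:=\sum_{j,k=1}^N\partial_j\phi(X_t)\,\partial_k\phi(X_t)\,g_t^{jk}\ge 0$.

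Next I would invoke the non-degeneracy assumption. On the full-probability event on which (a) for a.e.\ $t$ one has $X_t\in A\Rightarrow X_t\in D_\phi$, and (b) $\delta:=\inf\{q_t:t\in[0,T],\ X_t\in D_\phi\}>0$, we get that for a.e.\ $t$ with $X_t\in A$ the inequality $q_t\ge\delta$ holds, and therefore, using $0\le h_t\le K$,
\[
1_{X_t\in A}\,h_t\ \le\ K\,1_{X_t\in A}\ \le\ \frac{K}{\delta}\,1_{X_t\in A}\,q_t\qquad\text{for a.e.\ }t\in[0,T].
\]
Multiplying by $|\varphi(t)|$ and integrating in $dt$ over $[0,T]$ gives, simultaneously for all $\varphi\in C([0,T])$,
\[
\int_0^T 1_{X_t\in A}\,|\varphi(t)|\,\eta_X(dt)\ \le\ C_A\int_0^T 1_{X_t\in A}\,|\varphi(t)|\,d\langle\phi(X)\rangle_t,\qquad C_A:=K/\delta,
\]
which is exactly item ii) of Definition \ref{def controlled}; so $\phi(X)$ controls $X$ in quadratic variation on $A$.

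The only point deserving care is that $C_A=K/\delta$ should be a bona fide random variable. The supremum $K$ is harmless, but $\delta$ is an infimum over the uncountable, random time set $\{t:X_t\in D_\phi\}$ and $q_t$ need not be continuous in $t$ (since $\nabla\phi$ is only defined a.e.\ on $A$). I would circumvent this by noting that it suffices to have a measurable, a.s.-positive lower bound for $q_t$ on $\{X_t\in A\}$: the map $(t,\omega)\mapsto 1_{X_t\in A}\,q_t$ is jointly measurable, so the essential infimum $\tilde\delta(\omega)$ of $q_t$ over $\{t\in[0,T]:X_t\in A\}$ (with respect to Lebesgue measure on $[0,T]$) is a random variable with $\tilde\delta\ge\delta>0$ a.s.\ and $1_{X_t\in A}\,q_t\ge\tilde\delta\,1_{X_t\in A}$ for a.e.\ $t$; one then takes $C_A:=K/\tilde\delta$. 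Everything else is the elementary density comparison above.
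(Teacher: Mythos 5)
Your proof is correct and follows essentially the same route as the paper: bound the $dt$-density of $\eta_X$ above via the boundedness of $g$, bound the $dt$-density of $\langle\phi(X)\rangle$ below via the non-degeneracy infimum, and take the ratio as $C_A$ (the paper does this component-by-component via a multiply-and-divide by $q_t$, you do it all at once, but the two ingredients are identical). Your closing remark on replacing the pathwise infimum by an essential infimum to guarantee measurability of $C_A$ is a genuine refinement the paper glosses over.
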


\begin{proof}
We have only to check condition (ii)\ of Definition \ref{def controlled}. We
treat separately each component of $\eta_{X}$ and restrict our proof to a
component of the form $d\left\langle X^{i}+X^{i^{\prime}}\right\rangle _{t}$,
the others being similar. We have%
\begin{align*}
&  \int_{0}^{T}1_{X_{t}\in A}\left\vert \varphi\left(  t\right)  \right\vert
d\left\langle X^{i}+X^{i^{\prime}}\right\rangle _{t}\\
&  =\int_{0}^{T}1_{X_{t}\in A}\left\vert \varphi\left(  t\right)  \right\vert
\frac{\sum_{j,k=1}^{N}\partial_{j}\phi\left(  X_{t}\right)  \partial_{k}%
\phi\left(  X_{t}\right)  g_{t}^{jk}}{\sum_{j,k=1}^{N}\partial_{j}\phi\left(
X_{t}\right)  \partial_{k}\phi\left(  X_{t}\right)  g_{t}^{jk}}\left(
g_{t}^{ii}+g_{t}^{i^{\prime}i^{\prime}}+2g_{t}^{ii^{\prime}}\right)  dt\\
&  \leq C\int_{0}^{T}1_{X_{t}\in A}\left\vert \varphi\left(  t\right)
\right\vert \left\vert \sum_{j,k=1}^{N}\partial_{j}\phi\left(  X_{t}\right)
\partial_{k}\phi\left(  X_{t}\right)  g_{t}^{jk}\right\vert dt
\end{align*}
for a suitable random constant $C>0$ (here we use that $g_{t}^{ii}$ are
bounded and the non-degeneracy assumption for $\left\langle \phi\left(
X\right)  \right\rangle $ on $A$)%
\[
=C\int_{0}^{T}1_{X_{t}\in A}\left\vert \varphi\left(  t\right)  \right\vert
\sum_{j,k=1}^{N}\partial_{j}\phi\left(  X_{t}\right)  \partial_{k}\phi\left(
X_{t}\right)  g_{t}^{jk}dt
\]
(we have used the non-negativity of $\sum_{j,k=1}^{N}\partial_{j}\phi\left(
X_{t}\right)  \partial_{k}\phi\left(  X_{t}\right)  g_{t}^{jk}$)%
\[
=C\int_{0}^{T}1_{X_{t}\in A}\left\vert \varphi\left(  t\right)  \right\vert
d\left\langle \phi\left(  X\right)  \right\rangle _{t}%
\]
and the proof is complete.
\end{proof}

\begin{corollary}\label{cor ipo}
If $cI_{N}\leq g_{t}\leq CI_{N}$ a.s. in $\left(  t,\omega\right)  $, for some constants $C\geq c>0$, $\phi\in
C^{2}\left(  \mathbb{R}^{N}\right)$, and $\inf_{x\in A}\left\vert \nabla
\phi\left(  x\right)  \right\vert >0$ on an open set $A\subset\mathbb{R}^{N}$,
then $\phi\left(  X\right)  $ controls $X$ in quadratic variation on $A$.
\end{corollary}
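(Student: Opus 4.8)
The plan is to deduce this directly from Proposition \ref{Prop suff cond for control}: under the stated hypotheses I would verify that $\langle\phi(X)\rangle$ is non-degenerate on $A$ in the precise sense of Definition \ref{definition non-degenerate}, and then invoke that proposition.

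First I would record the structural facts. Since $\phi\in C^{2}(\mathbb R^N)$, it is in particular in $Lip_{loc}(A)$ and differentiable at every point of $A$, so one may take $D_{\phi}=A$; then the full-measure and measurability requirements in Definition \ref{definition non-degenerate} are trivially met and the implication ``$X_t\in A\Rightarrow X_t\in D_{\phi}$'' is automatic. The bound $cI_{N}\le g_{t}\le CI_{N}$ forces the entries of the bracket density to be bounded, so (choosing an adapted version of the Radon--Nikodym derivative of the adapted, absolutely continuous process $\langle X^{j},X^{k}\rangle_t$) $\langle X\rangle_t$ is Lipschitz continuous and $g_{t}^{jk}=d\langle X^{j},X^{k}\rangle_t/dt$ is well defined with bounded paths. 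Applying It\^o's formula to $\phi(X_t)$ shows that $\phi(X)$ is a continuous semimartingale and that its martingale part satisfies $d\langle\phi(X)\rangle_t=\sum_{j,k=1}^{N}\partial_{j}\phi(X_t)\partial_{k}\phi(X_t)g_{t}^{jk}\,dt$, the finite-variation term contributing nothing to the bracket.

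The only point requiring a (short) argument is the uniform lower bound. For each $t$ with $X_t\in A$ the integrand is the quadratic form $\nabla\phi(X_t)^{\top}g_{t}\,\nabla\phi(X_t)$; using $g_{t}\ge cI_{N}$ and then $|\nabla\phi(X_t)|\ge\inf_{x\in A}|\nabla\phi(x)|=:m>0$ gives
\[
\sum_{j,k=1}^{N}\partial_{j}\phi(X_t)\partial_{k}\phi(X_t)g_{t}^{jk}\ \ge\ c\,|\nabla\phi(X_t)|^{2}\ \ge\ c\,m^{2}>0 .
\]
Hence $\inf_{t\in[0,T]:X_t\in D_{\phi}}\sum_{j,k}\partial_{j}\phi(X_t)\partial_{k}\phi(X_t)g_{t}^{jk}\ge cm^{2}>0$ with probability one, which is exactly the non-degeneracy condition. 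Proposition \ref{Prop suff cond for control} then yields that $\phi(X)$ controls $X$ in quadratic variation on $A$.

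I do not expect any genuine obstacle: the statement is essentially a bookkeeping reduction to Proposition \ref{Prop suff cond for control}. The only mildly delicate points are (i) passing to an adapted bounded version of $g_t$ so that $\langle X\rangle$ is Lipschitz continuous in the sense used there, and (ii) ensuring the constant $m$ is strictly positive, which is guaranteed precisely because the hypothesis is $\inf_{x\in A}|\nabla\phi(x)|>0$ and not merely $|\nabla\phi(x)|>0$ for each $x\in A$.
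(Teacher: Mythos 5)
Your proof is correct and follows essentially the same route as the paper: verify the non-degeneracy condition of Definition~\ref{definition non-degenerate} (taking $D_{\phi}=A$ and using It\^o's formula to identify $d\langle\phi(X)\rangle_t$) and then invoke Proposition~\ref{Prop suff cond for control}. If anything your version is tidier than the paper's: you correctly insert the factor $c$ from $g_t\ge cI_N$ to get $\nabla\phi^{\top}g_t\nabla\phi\ge c|\nabla\phi|^2\ge cm^2$, whereas the paper's displayed chain writes an equality $\sum\partial_j\phi\,\partial_k\phi\,g_t^{jk}=|\nabla\phi(X_t)|^2$ (valid only for $g_t=I_N$) and then bounds by a $\min$ over $\overline{A}$ of an undefined $\phi_A$, both of which read as slips; your direct use of $m:=\inf_{x\in A}|\nabla\phi(x)|>0$ is the clean way to close the argument.
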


\begin{proof}
We have that
\begin{align*}
\inf_{t\in\left[  0,T\right]  :X_{t}\in A}\sum_{i,j=1}^{N}\partial_{j}%
\phi\left(  X_{t}\right)  \partial_{k}\phi\left(  X_{t}\right)  g_{t}^{jk}
&=\inf_{t\in\left[  0,T\right]  :X_{t}\in A}\left\vert \nabla\phi\left(
X_{t}\right)  \right\vert ^{2}\\
=\inf_{t\in\left[  0,T\right]  :X_{t}\in A}\left\vert \nabla\phi_{A}\left(
X_{t}\right)  \right\vert ^{2}&\geq\min_{x\in\overline{A}}\left\vert \nabla\phi_{A}\left(  x\right)
\right\vert >0.
\end{align*}
Hence the hypotheses of non-degeneracy of Definition \ref{definition non-degenerate} are satisfied taking $D_\phi=A$ and using the It\^o formula.
\end{proof}

\subsection{Local occupation time formula}\label{2_3}
\begin{theorem}
\label{Thm occupation time formula localized}Let $X$ be a continuous
semimartingale in $\mathbb{R}^{N}$, $\phi:\mathbb{R}^{N}\rightarrow\mathbb{R}$
be a Borel function, $A$ be an open set of $\mathbb{R}^{N}$. Assume that
$\phi\left(  X\right)  $ controls $X$ in quadratic variation on $A$. Then
there exists a random bounded compact support non-negative function
$\mathcal{L}_{i,A,\phi}^{a}$ and random probability measures $Q_{A,\phi}^{i}\left(
a,dx\right)  $, such that $Q_{A,\phi}^{i}\left(  a,\cdot\right)$ is concentrated on
$\Gamma_{a}=\left\{  x\in A:\phi\left(  x\right)  =a\right\}  $ for a.e.
$a\in\mathbb{R}$, and that%
\begin{align*}
\int_{0}^{T}1_{X_{t}\in A}f\left(  X_{t}\right)  d\left\langle
X^{i}\right\rangle _{t}=\int_{\mathbb{R}}\left(  \int_{\Gamma_{a}}f\left(  x\right)  Q_{A,\phi}%
^{i}\left(  a,dx\right)  \right)  \mathcal{L}_{i,A,\phi}^{a}da.
\end{align*}
Moreover we have
\[
\mathcal{L}_{i,A,\phi}^{a}=\lim_{\varepsilon\rightarrow0}\frac{1}{2\varepsilon}%
\int_{0}^{T}1_{X_{t}\in A}1_{(a-\varepsilon,a+\varepsilon)}(\phi
(X_{t}))d\left\langle X^{i}\right\rangle _{t}%
\]
for a.e. $a$. Similar results hold for $\int_{0}^{T}f\left(  X_{t}\right)
d\left\langle X^{i}+X^{j}\right\rangle _{t}$.
\end{theorem}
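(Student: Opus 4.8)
The plan is to reduce the $N$-dimensional statement to the classical one-dimensional occupation time formula applied to the real semimartingale $Y_t := \phi(X_t)$. First I would introduce, for each $i\in\{1,\dots,N\}$, the random finite Borel measure $\mu_i$ on $\mathbb R^N$ defined by $\mu_i(f)=\int_0^T 1_{X_t\in A}\,f(X_t)\,d\langle X^i\rangle_t$; this is finite because $\langle X^i\rangle_T<\infty$. Its total mass is $\mu_i(\mathbb R^N)=\int_0^T 1_{X_t\in A}\,d\langle X^i\rangle_t$. The control hypothesis (Definition \ref{def controlled}(ii)) is precisely what guarantees that $\mu_i$ is absolutely continuous, with a bounded density, with respect to the "occupation measure along $\phi$": writing $\kappa(\varphi)=\int_0^T 1_{X_t\in A}\,\varphi(t)\,d\langle Y\rangle_t$ for the corresponding measure in the time variable, we have $\int_0^T 1_{X_t\in A}|\varphi(t)|\,d\langle X^i\rangle_t\le C_A\int_0^T 1_{X_t\in A}|\varphi(t)|\,d\langle Y\rangle_t$, and symmetric estimates from below do \emph{not} hold but are not needed.

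Next I would push everything forward by $\phi$. Set $\nu_i := \phi_\sharp\mu_i$, a random finite Borel measure on $\mathbb R$. By Proposition \ref{rohlin} there is a jointly measurable family of conditional probabilities $Q^i_{A,\phi}(a,\cdot)=\mu_{i,a}$, concentrated on $\Gamma_a=\{x\in A:\phi(x)=a\}$ for $\nu_i$-a.e.\ $a$, with
\[
\mu_i(f)=\int_{\mathbb R}\Bigl(\int_{\Gamma_a} f(x)\,Q^i_{A,\phi}(a,dx)\Bigr)\nu_i(da).
\]
The heart of the argument is to show that $\nu_i$ is absolutely continuous with respect to Lebesgue measure on $\mathbb R$, and to identify its density. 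For this I would use the one-dimensional occupation time formula for the semimartingale $Y=\phi(X)$: for every bounded Borel $h:\mathbb R\to\mathbb R$,
\[
\int_0^T h(Y_t)\,1_{X_t\in A}\,d\langle X^i\rangle_t
\]
is, by the change-of-variables identity $\nu_i(h)=\mu_i(h\circ\phi)$, exactly $\nu_i(h)$. On the other hand, because $\langle Y\rangle$ is absolutely continuous in $t$ with a density controlling $d\langle X^i\rangle_t 1_{X_t\in A}$ (again the control hypothesis), the signed measure $B\mapsto\int_0^T 1_B(Y_t)1_{X_t\in A}\,d\langle X^i\rangle_t$ is absolutely continuous with respect to the occupation measure $B\mapsto\int_0^T 1_B(Y_t)\,d\langle Y\rangle_t$, which by the classical occupation time formula equals $\int_B L^a_T(Y)\,da$. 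Hence $\nu_i(da)=\mathcal L^a_{i,A,\phi}\,da$ for a Radon–Nikodym density $\mathcal L^a_{i,A,\phi}\ge 0$, which is bounded (by $C_A$ times the bound on the local time's role, more precisely because the Radon–Nikodym derivative of $\mu_i$-occupation w.r.t.\ $\langle Y\rangle$-occupation is bounded by $C_A$ and $a\mapsto L^a_T(Y)$ is a.e.\ finite with compact support in $a$). Compact support of $\mathcal L^a_{i,A,\phi}$ follows since $Y([0,T])$ is a bounded random interval. Substituting $\nu_i(da)=\mathcal L^a_{i,A,\phi}\,da$ into the disintegration identity yields the displayed formula.

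Finally, for the limit representation I would test the formula against $f=1_{(a-\varepsilon,a+\varepsilon)}\circ\phi$, giving
\[
\frac{1}{2\varepsilon}\int_0^T 1_{X_t\in A}1_{(a-\varepsilon,a+\varepsilon)}(\phi(X_t))\,d\langle X^i\rangle_t
=\frac{1}{2\varepsilon}\int_{a-\varepsilon}^{a+\varepsilon}\mathcal L^b_{i,A,\phi}\,db,
\]
using that $Q^i_{A,\phi}(b,\Gamma_b)=1$; Lebesgue's differentiation theorem then gives the limit for a.e.\ $a$. The case of $\langle X^i+X^j\rangle$ is identical, replacing $d\langle X^i\rangle_t$ by $d\langle X^i+X^j\rangle_t$ throughout, which is still controlled by $\eta_X$ and hence by $d\langle Y\rangle_t$ on $A$. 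The main obstacle, and the step requiring the most care, is the simultaneous handling of measurability in $\omega$ — one must run the one-dimensional occupation-time argument $\omega$-wise but then ensure that the resulting density $\mathcal L^a_{i,A,\phi}$ and the conditional measures $Q^i_{A,\phi}(a,\cdot)$ are jointly measurable in $(a,\omega)$; Proposition \ref{rohlin} supplies this for the disintegration, and for the density one can either invoke the jointly measurable construction of local time for $Y$ or define $\mathcal L^a_{i,A,\phi}$ directly via the $\varepsilon\to0$ limit above along a countable sequence and check it agrees a.e.
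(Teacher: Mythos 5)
Your proposal is correct and is essentially the paper's own argument: both push the measure $\mu_i$ forward by $\phi$, invoke Proposition \ref{rohlin} for the disintegration, and then use the control hypothesis together with the one-dimensional occupation time formula for $\phi(X)$ to show the transversal measure $\nu_i$ has a bounded density (the paper phrases this as $L^1$--$L^\infty$ duality for the functional $F_{i,A}(\theta)=\int_0^T 1_{X_t\in A}\theta(\phi(X_t))\,d\langle X^i\rangle_t$, you phrase it as a two-step Radon--Nikodym domination, but these are the same estimate). The final limit identification via testing against $\theta=1_{(a-\varepsilon,a+\varepsilon)}$ and Lebesgue differentiation is likewise identical.
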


\begin{proof}
Consider the random Borel measure $\mu_{A}^{i}$ on $\mathbb{R}^{N}$ defined as%
\[
\mu_{A}^{i}\left(  f\right)  =\int_{0}^{T}1_{X_{t}\in A}f\left(  X_{t}\right)
d\left\langle X^{i}\right\rangle _{t}%
\]
for all $f\in C_{b}\left(  \mathbb{R}^{N}\right)  $. We have
\[
\mu_{A}^{i}\left(  A^{c}\right)  =\int_{0}^{T}1_{X_{t}\in A}1_{X_{t}\notin
A}d\left\langle X^{i}\right\rangle _{t}=0
\]
namely $\mu_{A}^{i}$ is concentrated on $A$. From Proposition \ref{rohlin},
there exists a family of probability measures $Q_{A,\phi}^{i}\left(  a,\cdot
\right)  $ concentrated on $\Gamma_{a}$, for $\nu$-a.e. $a\in\mathbb{R}$, and
a Borel measure $\nu_{A,\phi}^{i}$ on $\mathbb{R}$, such that%
\[
\mu_{A}^{i}\left(  f\right)  =\int_{\mathbb{R}}\left(  \int_{\Gamma_{a}%
}f\left(  x\right)  Q_{A,\phi}^{i}\left(  a,dx\right)  \right)  \nu_{A,\phi}^{i}\left(
da\right)
\]
for all positive Borel functions $f:\mathbb{R}^{N}\rightarrow\mathbb{R}$. We
want to prove that $\nu_{A,\phi}^{i}$ has a bounded density with respect to
Lebesgue measure.

If we choose $f$ of the form $f\left(  x\right)  =\theta\left(  \phi\left(
x\right)  \right)  $ with a positive Borel function $\theta:\mathbb{R}%
\rightarrow\mathbb{R}$, we get
\[
\int_{0}^{T}1_{X_{t}\in A}\theta\left(  \phi\left(  X_{t}\right)  \right)
d\left\langle X^{i}\right\rangle _{t}=\int_{\mathbb{R}}\theta\left(  a\right)
\nu_{A,\phi}^{i}\left(  da\right)  .
\]
Thus, let us consider the random linear functional
\[
F_{i,A}\left(  \theta\right)  =\int_{0}^{T}1_{X_{t}\in A}\theta\left(
\phi\left(  X_{t}\right)  \right)  d\left\langle X^{i}\right\rangle _{t}.
\]
We have, by the main assumption,%
\begin{align*}
\left\vert F_{i,A}\left(  \theta\right)  \right\vert  &  \leq\int_{0}%
^{T}1_{X_{t}\in A}\left\vert \theta\left(  \phi\left(  X_{t}\right)  \right)
\right\vert d\left\langle X^{i}\right\rangle _{t}\\
&  \leq C_{A}\int_{0}^{T}1_{X_{t}\in A}\left\vert \theta\left(  \phi\left(
X_{t}\right)  \right)  \right\vert d\left\langle \phi\left(  X\right)
\right\rangle _{t}\leq C_{A}\int_{0}^{T}\left\vert \theta\left(  \phi\left(  X_{t}\right)
\right)  \right\vert d\left\langle \phi\left(  X\right)  \right\rangle _{t}.
\end{align*}
By the occupation time formula for $\phi\left(  X\right)  $,
\[
=C_{A}\int_{\mathbb{R}}\left\vert \theta\left(  a\right)  \right\vert
L_{T}^{a}\left(  \phi\left(  X\right)  \right)  da
\]
where $L_{T}^{a}\left(  \phi\left(  X\right)  \right)  $ is the local time at
$a$ of the continuous semimartingale $\phi\left(  X\right)  $ on $\left[
0,T\right]  $. This local time, as a function of $a$, is, with probability
one, c\`{a}dl\`{a}g and bounded support. Hence%
\[
\leq C_{A}^{\prime}\int_{\mathbb{R}}\left\vert \theta\left(  a\right)
\right\vert da
\]
for a new random constant $C_{A}^{\prime}>0$. The functional $F_{i,A}$ is thus
($\omega$-wise) bounded continuous on $L^{1}\left(  \mathbb{R}\right)  $, and
it is non-negative on non-negative functions, and thus there exists a bounded
non-negative function $\mathcal{L}_{i,A,\phi}^{a}$ such that
\[
F_{i,A}\left(  \theta\right)  =\int_{\mathbb{R}}\theta\left(  a\right)
\mathcal{L}_{i,A,\phi}^{a}da.
\]
This proves the first claim of the theorem.

If we use $\theta=1_{(a-\varepsilon,a+\varepsilon)}$ as a test function, with
a given $a$, we obtain that%
\[
\frac{1}{2\varepsilon}\int_{0}^{T}1_{X_{t}\in A}1_{(a-\varepsilon
,a+\varepsilon)}\left(  \phi\left(  X_{t}\right)  \right)  d\left\langle
X^{i}\right\rangle _{t}=\frac{1}{2\varepsilon}\int_{a-\varepsilon
}^{a+\varepsilon}\mathcal{L}_{i,A,\phi}^{a^{\prime}}da^{\prime}.
\]
Thanks to the Lebesgue theorem, we get
\[
\mathcal{L}_{i,A,\phi}^{a}=\lim_{\varepsilon\rightarrow0}\frac{1}{2\varepsilon}%
\int_{0}^{T}1_{X_{t}\in A}1_{(a-\varepsilon,a+\varepsilon)}\left(  \phi\left(
X_{t}\right)  \right)  d\left\langle X^{i}\right\rangle _{t}
\]
for a.e$.$ $a$. The proof is complete.
\end{proof}

\paragraph{Proof of Theorem \ref{general occup}.}
\begin{proof2}
It readily follows from Corollary \ref{cor ipo} and Theorem \ref{Thm occupation time formula localized}.
\end{proof2}
\refstepcounter{theorem1}

\section{Examples and applications\label{Sect applications}}
\subsection{Singular sets of the functions $\phi$}
The difficulty to apply Theorem \ref{general occup} on the full space $\mathbb{R}^{N}$ is in
the fact that the non-degeneracy assumption (replaced by $\left\vert
\nabla\phi\left(  x\right)  \right\vert >0$) is
quite restrictive. However, in some cases the singular set is
\textit{polar} for the process and the theory applies using Theorem \ref{Thm occupation time formula localized}. Let us see this
\textit{global }corollary.

The set $\left\{  x\in\mathbb{R}^{N}:\left\vert \nabla\phi\left(  x\right)
\right\vert =0\right\}  $ will be called the \textit{singular set} of $\phi$
and denoted by $S_{\phi}$. Recall that $g_t^{jk}:=d\langle X^j,X^k\rangle_t/dt$.

\begin{corollary}
\label{Corollary global}Let $X$ is a continuous semimartingale in $\mathbb R^N$ such that $cI_n\le g_t\le CI_N$ a.s$.$ in $(t,\omega)$ for some constants $C>c>0$. Let $\phi\in C^2(\mathbb R^N)$ be a function such that the singular set $S_{\phi}$ is polar for $X$. Then the results of Theorem \ref{Thm occupation time formula localized} hold.
\end{corollary}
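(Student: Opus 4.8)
The plan is to reduce Corollary \ref{Corollary global} to Theorem \ref{Thm occupation time formula localized} by choosing a suitable open set $A$ on which the non-degeneracy hypothesis holds, and then showing that the polar condition on $S_\phi$ makes the localization harmless. Concretely, I would set $A=\mathbb{R}^N\setminus S_\phi$, which is open because $S_\phi$ is closed (it is the zero set of the continuous map $x\mapsto|\nabla\phi(x)|$). On $A$ we do \emph{not} a priori have $\inf_{x\in A}|\nabla\phi(x)|>0$, so Corollary \ref{cor ipo} does not apply directly; instead I would work on an exhaustion of $A$ by open sets $A_n$ with $\overline{A_n}\subset A$ compact and $A_n\uparrow A$, for instance $A_n=\{x\in B_n(0): |\nabla\phi(x)|>1/n\}$. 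On each $A_n$ we have $\inf_{x\in A_n}|\nabla\phi(x)|\ge 1/n>0$, so Corollary \ref{cor ipo} gives that $\phi(X)$ controls $X$ in quadratic variation on $A_n$, and Theorem \ref{Thm occupation time formula localized} yields the occupation formula with objects $\mathcal{L}^a_{i,A_n,\phi}$ and $Q^i_{A_n,\phi}(a,dx)$.

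The next step is to pass to the limit $n\to\infty$ using the polarity of $S_\phi$. The hypothesis ``$S_\phi$ is polar for $X$'' should be read as $P(\exists\, t\in[0,T]:X_t\in S_\phi)=0$, or at least that the occupation measure of $X$ charges $S_\phi$ with zero mass; in either reading one gets
\[
\int_0^T 1_{X_t\in S_\phi}\,d\langle X^i\rangle_t=0\quad\text{a.s.}
\]
since $d\langle X^i\rangle_t=g_t^{ii}\,dt\le C\,dt$ is absolutely continuous in $t$. Combined with $\bigcup_n A_n=A=\mathbb{R}^N\setminus S_\phi$ and monotone convergence, this gives
\[
\int_0^T 1_{X_t\in A_n}f(X_t)\,d\langle X^i\rangle_t\;\xrightarrow[n\to\infty]{}\;\int_0^T 1_{X_t\in A}f(X_t)\,d\langle X^i\rangle_t=\int_0^T f(X_t)\,d\langle X^i\rangle_t
\]
for non-negative Borel $f$. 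On the left-hand side, the formula for $A_n$ reads $\int_{\mathbb{R}}\bigl(\int_{\Gamma^{(n)}_a}f\,dQ^i_{A_n,\phi}(a,dx)\bigr)\mathcal{L}^a_{i,A_n,\phi}\,da$, where $\Gamma^{(n)}_a=\{x\in A_n:\phi(x)=a\}$.

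To package the limit as an occupation formula of the form asserted, I would define $\mathcal{L}^a_{i,\phi}:=\lim_n \mathcal{L}^a_{i,A_n,\phi}$ (the sequence is monotone nondecreasing in $n$ for a.e.\ $a$, directly from the $\varepsilon\to0$ characterization in Theorem \ref{Thm occupation time formula localized}, since $1_{X_t\in A_n}\uparrow 1_{X_t\in A}$), and correspondingly disintegrate the limiting measure $\mu^i(f)=\int_0^T f(X_t)\,d\langle X^i\rangle_t$ against $\phi$ using Proposition \ref{rohlin} to extract the probability kernels $Q^i_\phi(a,dx)$ concentrated on $\Gamma_a=\{x\in\mathbb{R}^N:\phi(x)=a\}$. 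The identity $\nu^i_\phi=\mathcal{L}^a_{i,\phi}\,da$, i.e.\ that the transversal measure still has a bounded density, follows because $\nu^i_\phi$ is the monotone limit of $\nu^i_{A_n,\phi}=\mathcal{L}^a_{i,A_n,\phi}\,da$ and, by the chain of inequalities in the proof of Theorem \ref{Thm occupation time formula localized} (with $C_A$ replaced by the global constant coming from $cI_N\le g_t\le CI_N$ and $\phi\in C^2$ with $|\nabla\phi|$ vanishing only on the polar set $S_\phi$), we have $\nu^i_{A_n,\phi}(\mathbb{R})=\int_0^T 1_{X_t\in A_n}\,d\langle X^i\rangle_t\le \int_0^T d\langle X^i\rangle_t\le CT$, uniformly in $n$; more precisely the density bound $\mathcal{L}^a_{i,A_n,\phi}\le C'$ is uniform in $n$ because the bounding local time $L_T^a(\phi(X))$ and the control constant are independent of $n$.

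The main obstacle I anticipate is the last point: controlling $F_{i}(\theta)=\int_0^T\theta(\phi(X_t))\,d\langle X^i\rangle_t$ (without the indicator $1_{X_t\in A_n}$) by $C\int_{\mathbb{R}}|\theta(a)|L_T^a(\phi(X))\,da$ requires that $\phi(X)$ control $X$ in quadratic variation on all of $A=\mathbb{R}^N\setminus S_\phi$, not merely on each $A_n$; the bound $(g^{ii}_t+\dots)\le C'\sum_{j,k}\partial_j\phi\,\partial_k\phi\,g^{jk}_t$ degenerates as $X_t$ approaches $S_\phi$. The resolution is exactly the polarity hypothesis: since $\int_0^T 1_{X_t\in S_\phi}\,d\langle X^i\rangle_t=0$ a.s., the problematic region contributes nothing, so one may insert $1_{X_t\in A}$ for free and then the estimate is valid $\omega$-wise with a finite (random) constant, because on $\{t:X_t\in A\}$ the quantity $\sum_{j,k}\partial_j\phi(X_t)\partial_k\phi(X_t)g^{jk}_t=|\nabla\phi(X_t)|^2\ge \inf_{t:X_t\in A}|\nabla\phi(X_t)|^2$, and this infimum is a.s.\ strictly positive (it equals $\min$ over the compact random path, which avoids $S_\phi$). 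Hence $\langle\phi(X)\rangle$ is non-degenerate on $A$ in the sense of Definition \ref{definition non-degenerate} with $D_\phi=A$, Proposition \ref{Prop suff cond for control} applies, and we recover the control on $A$ directly, so in fact the exhaustion argument collapses and one may simply invoke Theorem \ref{Thm occupation time formula localized} with $A=\mathbb{R}^N\setminus S_\phi$ together with $\int_0^T 1_{X_t\in A}f(X_t)\,d\langle X^i\rangle_t=\int_0^T f(X_t)\,d\langle X^i\rangle_t$.
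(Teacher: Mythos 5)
Your final paragraph, where the exhaustion collapses, is essentially the paper's proof: the paper takes $A=\mathbb{R}^N$ directly, observes that for a.e.\ $\omega$ the map $t\mapsto|\nabla\phi(X_t(\omega))|^2$ is continuous and nowhere zero on the compact interval $[0,T]$ (since the path avoids the polar set $S_\phi$), hence has a strictly positive infimum, so $\langle\phi(X)\rangle$ is non-degenerate on $\mathbb{R}^N$ in the sense of Definition \ref{definition non-degenerate}, and then invokes Proposition \ref{Prop suff cond for control} and Theorem \ref{Thm occupation time formula localized}. The whole earlier apparatus of your proposal (the exhaustion $A_n\uparrow A$, the monotone limit of $\mathcal{L}^a_{i,A_n,\phi}$, the re-disintegration of the limit measure, and the uniform density bound) is unnecessary scaffolding: once you note that compactness of the path together with a.s.\ avoidance of $S_\phi$ yields a strictly positive random infimum of $|\nabla\phi(X_t)|^2$ over $[0,T]$, the non-degeneracy holds globally and no limiting argument is needed, as you yourself conclude. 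Your choice $A=\mathbb{R}^N\setminus S_\phi$ rather than $A=\mathbb{R}^N$ is immaterial, since $1_{X_t\in A}\equiv 1$ a.s.
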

\begin{proof}
The quadratic variation $\left\langle X\right\rangle _{t}$ is obviously
Lipschitz continuous. We have only to check that $\left\langle \phi\left(
X\right)  \right\rangle $ is non-degenerate on the full $\mathbb{R}^{N}$. We
have%
\[
\sum_{i,j=1}^{N}\partial_{i}\phi\left(  X_{t}\right)  \partial_{j}\phi\left(
X_{t}\right)  g_{t}^{ij}\geq c\left\vert \nabla\phi\left(  X_{t}\right)
\right\vert ^{2}.
\]
Given a.s. $\omega$, the function $t\mapsto\left\vert \nabla\phi\left(
X_{t}\left(  \omega\right)  \right)  \right\vert ^{2}$ is continuous
(composition of continuous functions) and different from zero at each point,
since $X_{t}\left(  \omega\right)  $ does not touch the polar set $S_{\phi}$.
Thus, on $\left[  0,T\right]  $, the function $t\mapsto\left\vert \nabla
\phi\left(  X_{t}\left(  \omega\right)  \right)  \right\vert ^{2}$ is strictly positive.

The assumptions of Theorem \ref{Thm occupation time formula localized} hold true and
thus the result holds.
\end{proof}

\begin{example}
If $X$ is a Brownian motion in $\mathbb{R}^{N}$ with $X_{0}=x_{0}$ and $S_{\phi}$ is given by a
finite number of points, with $\nabla\phi\left(  x_{0}\right)  \neq0$, then
the assumptions of Corollary \ref{Corollary global} are satisfied. An example
is
\[
\phi\left(  x\right)  =\left\vert x\right\vert ^{2}%
\]
when $x_{0}\neq0$. If $x_{0}=0$, we have to localize as in Theorem
\ref{general occup}, just by taking $A=\mathbb{R}%
^{N}\backslash\left\{  x:\left\vert x\right\vert \leq\varepsilon\right\}  $,
for some $\varepsilon>0$.
\end{example}

\begin{example}
\label{Example graph 1}If $X$ is a Brownian semimartingale in $\mathbb{R}^{N}$ and the singular
set $S_{\phi}$ is empty, then the assumptions of Corollary
\ref{Corollary global} are satisfied. For example this happens for%
\[
\phi\left(  x\right)  =x^{N}-g\left(  x^{1},...,x^{N-1}\right)
\]
where $g:\mathbb{R}^{N-1}\rightarrow\mathbb{R}$ is a $C^{2}$ function.
Indeed,
\[
\left\vert \nabla\phi\left(  x\right)  \right\vert ^{2}=\left\vert \nabla
g\left(  x^{1},...,x^{N-1}\right)  \right\vert ^{2}+1>0
\]
everywhere. Thus, in the case of a Brownian semimartingale, we may apply the
theory. This is related to \cite{Peskir} (which is much more general). The
manifolds $\Gamma_{a}$ are translations of the graph of $g$.
\end{example}

\subsection{Integration of functions $f$ with singularities}
The question treated in this section is when, for $i=1,...,N$,%
\begin{equation}
P\left(  \int_{0}^{T}\left\vert f\left(  X_{t}\right)  \right\vert
d\left\langle X^{i}\right\rangle _{t}<\infty\right)  =1
\label{finite integral}%
\end{equation}
for functions $f$ which are not bounded. Let us distinguish two cases:
\begin{description}
\item[i)] we already have a foliation $\Gamma_{a}:=\{\phi(x)=a\}$, $a\in\mathbb{R}$,
and $f$ is not bounded only in the transversal direction to the foliation

\item[ii)] we have only one manifold $\Gamma$ and a function $f$ which is unbounded
only in the neighborhood of $\Gamma$.
\end{description}
In the first case we only need to apply the formula;\ in the second case we
have to construct a suitable foliation.

Concerning case (i), we give two examples:\ Example \ref{Ex 1} is
elementary and global, Example \ref{Ex 2} is its general version.

\begin{example}
\label{Ex 1} Let $X$ be a continuous semimartingale in $\mathbb R^N$. Assume $X$ has Lipschitz continuous quadratic variation with
$$P\left(  \inf_{t\in\left[  0,T\right]  }g_{t}^{NN}>0\right)  =1.$$
Let $f:\mathbb{R}^{N}\rightarrow\mathbb{R}$ be a function of the form
\[
f(x)=f_{1}(x)f_{2}(x_{N})
\]
$x=\left(  x_{1},...,x_{N}\right)  \in\mathbb{R}^{N}$, where $f_{1}\in
C(\mathbb{R}^{N})$ and $f_{2}:\mathbb{R}\rightarrow\mathbb R$ is a locally integrable function. Then
(\ref{finite integral}) holds. To prove this fact we use $\phi:\mathbb{R}%
^{N}\rightarrow\mathbb{R}$ defined as $\phi\left(  x\right)  =x_{N}$: one has
\[
\sum_{i,j=1}^{N}\partial_{j}\phi\left(  X_{t}\right)  \partial_{k}\phi\left(X_{t}\right)  g_{t}^{jk}=g_{t}^{NN}
\]
and thus the assumptions of Theorem \ref{Thm occupation time formula localized} hold with $A=\mathbb R^N$.
Therefore
\begin{align*}
\int_{0}^{T}\left\vert f\left(  X_{t}\right)  \right\vert d\left\langle
X^{i}\right\rangle _{t}  &  \leq C_{1}\int_{0}^{T}\left\vert f_{2}\left(
X_{t}^{N}\right)  \right\vert d\left\langle X^{i}\right\rangle _{t}\\
&  =C_{1}\int_{\mathbb{R}}\left(  \int_{\Gamma_{a}}\left\vert f_{2}\left(
x_{N}\right)  \right\vert Q^{i}_{A,\phi}\left(  a,dx\right)  \right)  \mathcal{L}%
_{i,A,\phi}^{a}da
\end{align*}
$$=C_{1}\int_{\mathbb{R}}\left\vert f_{2}(a)\right\vert \mathcal{L}_{i,A,\phi}^{a}da=C_{1}\int_{B_{i}}\left\vert f_{2}(a)\right\vert \mathcal{L}_{i,A,\phi}^{a}da\leq C_{1}C_{2}\int_{B_{i}}\left\vert f_{2}(a)\right\vert da<+\infty$$
where, denoted by $K$ and $B_{i}$ random compact sets containing respectively
the image of the curve $X$ and the support of $\mathcal{L}_{i,A,\phi}^{a}$, we have
set $C_{1}=\sup_{K}\left\vert f_{1}\right\vert $, $C_{2}=\sup_{B_{i}%
}\mathcal{L}_{i,A,\phi}^{\cdot}$.
\end{example}

The following simple application of the occupation time formula will be used below. For instance, in the case when $\langle X^i\rangle_t=t$ it implies that $X_t\notin A\cap \phi^{-1}(N)$ for a.e$.$ $t\in[0,T]$ with probability 1.

\begin{lemma}
\label{corollary measure zero on Gamma}Under the assumptions and with the
notations of Theorem \ref{Thm occupation time formula localized}, with probability one,
\[
\int_{0}^{T}1_{A\cap\phi^{-1}\left(  E\right)  }\left(  X_{t}\right)
d\left\langle X^{i}\right\rangle _{t}=0
\]
for every Borel set $E\subset\mathbb{R}$ of zero Lebesgue measure.
\end{lemma}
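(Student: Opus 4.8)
The plan is to apply the occupation time formula of Theorem~\ref{Thm occupation time formula localized} to the function $f(x) = 1_{\phi^{-1}(E)}(x) = 1_E(\phi(x))$, i.e.\ to the test function $\theta = 1_E$ in the one-dimensional reduction used in that proof. Writing $\mu_A^i$ for the random measure $\mu_A^i(f) = \int_0^T 1_{X_t\in A} f(X_t)\, d\langle X^i\rangle_t$, the quantity we must show is zero is exactly $\mu_A^i(\phi^{-1}(E)) = \int_{\mathbb R} 1_E(\phi(x))$ integrated against the disintegration, which collapses to the one-dimensional identity
\[
\int_0^T 1_{X_t\in A} 1_E(\phi(X_t))\, d\langle X^i\rangle_t = \int_{\mathbb R} 1_E(a)\, \mathcal L_{i,A,\phi}^a\, da.
\]
Since $E$ has zero Lebesgue measure and $\mathcal L_{i,A,\phi}^a$ is a (random) bounded function, the right-hand side vanishes for $P$-a.e.\ $\omega$. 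This is the whole argument; it is essentially a one-line consequence of the theorem once the right test function is chosen.

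There are two small points of care. First, the identity $F_{i,A}(\theta) = \int_{\mathbb R}\theta(a)\,\mathcal L_{i,A,\phi}^a\, da$ in Theorem~\ref{Thm occupation time formula localized} is stated for positive Borel $\theta$, and $1_E$ is such a function, so no extension is needed; but one should note the bound $\mathcal L_{i,A,\phi}^a \le C_A'$ comes with a random constant, so the conclusion ``equals zero'' holds $\omega$-wise on a full-probability set (independent of $E$, since $C_A'$ does not depend on $E$). Second, one may wish to phrase the conclusion uniformly in $E$: because the estimate $\int_0^T 1_{X_t\in A} 1_E(\phi(X_t))\, d\langle X^i\rangle_t \le C_A' \,\mathrm{Leb}(E)$ holds for every Borel $E$ on the same full-probability event, the exceptional null set can be chosen once and for all, so ``with probability one, for every Borel set $E$ of zero Lebesgue measure'' is legitimate without a measurable-selection subtlety.

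There is no real obstacle here; the lemma is a direct corollary. If anything, the only thing to be mildly careful about is making sure the chosen $\omega$-null set does not depend on $E$ — and this is immediate from the uniform bound by $\mathrm{Leb}(E)$ with a fixed random constant $C_A'$. The remark preceding the lemma (the special case $\langle X^i\rangle_t = t$, yielding $\int_0^T 1_{A\cap\phi^{-1}(E)}(X_t)\, dt = 0$, hence $X_t \notin A\cap\phi^{-1}(E)$ for a.e.\ $t$) then follows simply by taking $E$ to be a single point or, more relevantly, the set $N$ at which some subsequent degeneracy occurs.
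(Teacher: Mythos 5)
Your proof is correct and essentially coincides with the paper's: both apply Theorem~\ref{Thm occupation time formula localized} to the test function $1_{A\cap\phi^{-1}(E)}$ (equivalently $\theta=1_E$ precomposed with $\phi$), observe that because $Q^i_{A,\phi}(a,\cdot)$ is concentrated on $\Gamma_a\subset\phi^{-1}(a)$ the inner integral reduces to $1_E(a)$, and then conclude via boundedness of $\mathcal L^a_{i,A,\phi}$ and $\mathrm{Leb}(E)=0$. Your additional remark that the $\omega$-null set can be chosen once for all $E$ because of the uniform estimate $\int_0^T 1_{X_t\in A}1_E(\phi(X_t))\,d\langle X^i\rangle_t \le C_A'\,\mathrm{Leb}(E)$ is a sound and worthwhile clarification, though the paper leaves it implicit.
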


\begin{proof}
By the local occupation time formula of Theorem \ref{Thm occupation time formula localized} we have
\begin{align*}
\int_{0}^{T}1_{A\cap\phi^{-1}\left(  E\right)  }\left(  X_{t}\right)
d\left\langle X^{i}\right\rangle _{t}  & =\int_{\mathbb{R}}\left(  \int%
_{\phi^{-1}\left(  a\right)  }1_{A\cap\phi^{-1}\left(  E\right)  }\left(
x\right)  Q_{A,\phi}^{i}\left(  a,dx\right)  \right)  \mathcal{L}_{i,A,\phi}^{a}da\\
& \leq\int_{\mathbb{R}}1_{E}\left(  a\right)  \mathcal{L}_{i,A,\phi}^{a}da=0.
\end{align*}

\end{proof}

If $X$ is a Brownian semimartingale in $\mathbb{R}^{N}$ and we apply Lemma \ref{corollary measure zero on Gamma} we obtain
\[
\int_{0}^{T}1_{A\cap\phi^{-1}\left(  E\right)  }\left(  X_{t}\right)  dt=0
\]
for every Borel set $E\subset\mathbb{R}$ of zero Lebesgue measure. Due to
this, the assumption in Example \ref{Ex 1} that $f_{2}:\mathbb{R\rightarrow R}$ is a
locally integrable function may be replaced by the assumption $f_{2}\in
L_{loc}^{1}\left(  \mathbb{R}\right)  $;\ in other words, the result does not
change if we modify $f_{2}$ on a zero-measure set or in the case when $f_{2}$
is not even defined on a zero-measure set. Indeed if $f_{2}$ is not defined on the zero-measure set $E$, then $f\left(
x\right)  =1_{A}f_{1}\left(  x\right)  f_{2}\left(  \phi\left(  x\right)
\right)  +1_{A^{c}}g\left(  x\right)  $ is not defined on the set $A\cap
\phi^{-1}\left(  E\right)  $. But, with probability one, $X_{t}\in\left(
A\cap\phi^{-1}\left(  E\right)  \right)  ^{c}$ for a.e. $t\in\left[
0,T\right]  $ and thus $\int_{0}^{T}\left\vert f\left(  X_{t}\right)
\right\vert dt$ is well defined. This is the integral $\int_{0}^{T}\left\vert
f\left(  X_{t}\right)  \right\vert d\left\langle X^{i}\right\rangle _{t}$
examined by Example \ref{Ex 1}.

\begin{example}
\label{Ex 2}Let $f:\mathbb{R}^{N}\rightarrow\mathbb{R}$ be a function of the form%
\[
f(x)=1_{A}f_{1}(x)f_{2}\left(  \phi(x)\right)  +1_{A^{c}}g\left(  x\right)
\]
where $A$ is an open set, $f_{1}\in C(\overline{A})$, $f_{2}\in L_{loc}%
^{1}(\mathbb{R})$, $g\in C(A^{c})$, $\phi:\mathbb R^N\rightarrow \mathbb R$ is such that $\phi(X)$ controls $X$ in quadratic variation on $A$. Let $X$ have a Lipschitz continuous quadratic variation and assume that $\langle \phi(X)\rangle$ is non-degenerate on $A$. Then (\ref{finite integral})
holds. Indeed, from Theorem \ref{Thm occupation time formula localized},
\begin{align*}
&  \int_{0}^{T}\left\vert f\left(  X_{t}\right)  \right\vert d\left\langle
X^{i}\right\rangle _{t}\\
&  =\int_{0}^{T}1_{X_{t}\notin A}\left\vert f\left(  X_{t}\right)  \right\vert
d\left\langle X^{i}\right\rangle _{t}+\int_{\mathbb{R}}\left(  \int%
_{\Gamma_{a}}\left\vert f\left(  x\right)  \right\vert Q_{A,\phi}^{i}\left(
a,dx\right)  \right)  \mathcal{L}_{i,A,\phi}^{a}da\\
&  \leq C_{1}\left\langle X^{i}\right\rangle _{T}+C_{2}\int_{B_{i,A}%
}\left\vert f_{2}(a)\right\vert \mathcal{L}_{i,A,\phi}^{a}da
\end{align*}
where, denoted by $K$ and $B_{i,A}$ random compact sets containing
respectively the image of the curve $X$ and the support of $\mathcal{L}%
_{i,A,\phi}^{a}$, we have set $C_{1}=\sup_{K\cap A^{c}}\left\vert g\right\vert $,
$C_{2}=\sup_{K\cap\overline{A}}\left\vert f_{1}\right\vert $. We conclude as
in the previous example.
\end{example}

Let us see now examples of case (ii) above, namely when we have a function $f$
which is singular only along an $\left(  N-1\right)  $-dimensional manifold
$\Gamma$. The problem here is to construct a suitable function $\phi$. Let us
see first a case which relates to \cite{Peskir}.

\begin{example}
Let us continue Example \ref{Example graph 1}. We assume that $X$ is a
Brownian semimartingale and $f:\mathbb{R}^{N}\backslash\Gamma\rightarrow
\mathbb{R}$ is a continuous function, where $\Gamma$ is the graph of a $C^{2}$
function $g:\mathbb{R}^{N-1}\rightarrow\mathbb{R}$. Consider the function%
\[
\phi\left(  x\right)  =x^{N}-g\left(  x^{1},...,x^{N-1}\right)
\]
the associated sets $\Gamma_{a}$ and the numbers $M_{a}^{\phi}\left(  \left\vert f\right\vert \right)  =\max_{x\in\Gamma_{a}}\left\vert f\left(  x\right)  \right\vert$ for every $a\neq0$. If
\[
\int_{-1}^{1}M_{a}^{\phi}\left(  \left\vert f\right\vert \right)  da<\infty
\]
then (\ref{finite integral}) holds. To prove this claim it is sufficient to apply the result of Example \ref{Ex 2}, with
$$A=\left\{x:\ -1<\phi(x)<1\right\},\quad g=f|A^c,\quad f_2(a)= M^\phi_a(|f|)$$
$$f_1(x)=\left\{
  \begin{array}{ll}
    f(x)/f_2(\phi(x)) & \hbox{if}\quad f_2(\phi(x))>0 \\
    0 & \hbox{if}\quad f_2(\phi(x))=0
  \end{array}
\right.$$
\end{example}

In the proof of Theorem \ref{integ} we will use some geometric results that we will develop in Section \ref{Sect distance with sign}. It is presented here because of its conceptual unity with the previous. Let $\Gamma$ be a manifold in $\mathbb R^N$ and $f:\mathbb R^N\backslash \Gamma\rightarrow \mathbb R$ be a measurable function. For every $a, R>0$ define
$$M_{a,R}(|f|):=\sup_{x\in\Gamma^d_{a,R}}|f(x)|$$
where $\Gamma^d_{a,R}=\left\{x\in B(0,R):\ d(x,\Gamma)=a\right\}$ and $B(0,R)$ is a ball.
\begin{definition}
We say that $f:\mathbb R^N\backslash \Gamma\rightarrow \mathbb R$ is in $L^1_{loc}(\Gamma^\bot)$ if for every $R>0$
$$\int_0^1 M_{a,R}(|f|)da<\infty$$
and $f$ is bounded on every compact set in $\mathbb R^N\backslash\Gamma$.
\end{definition}

\paragraph{Proof of Theorem \ref{integ}.}
\begin{proof2}
Let $\mathcal{U}$ and $\delta_{\Gamma}$ be given by Proposition \ref{def dist sign}. Let
$\mathcal{V}$, with $\overline{\mathcal{V}}\subset\mathcal{U}$, and $\phi\in
C^{2}\left(  \mathbb{R}^{N}\right)  $ (extension of $\delta_{\Gamma}$)\ be
given by Corollary \ref{trivial}. We have that that $\phi\left(  X\right)
$ controls $X$ in quadratic variation on $\mathcal{V}$ and in particular on $A:=\mathcal V\cap\{x:\ -1<d(x,\Gamma)<1\}$. From Theorem \ref{general occup} we have
\begin{align*}
&  \int_{0}^{T}\left\vert f\left(  X_{t}\right)  \right\vert d\left\langle
X^{i}\right\rangle _{t}\\
&  =\int_{0}^{T}1_{X_{t}\notin A}\left\vert f\left(  X_{t}\right)  \right\vert
d\left\langle X^{i}\right\rangle _{t}+\int_{\mathbb{R}}\left(  \int%
_{\Gamma_{a}}\left\vert f\left(  x\right)  \right\vert Q_{A,\phi}^{i}\left(
a,dx\right)  \right)  \mathcal{L}_{i,A,\phi}^{a}da.
\end{align*}
For each fixed $\omega\in\Omega$ the trajectory of the process $X$ remains inside a compact ball $B(0,R_\omega)$ and
$$\int_{0}^{T}1_{X_{t}\notin A}\left\vert f\left(  X_{t}\right)  \right\vert d\left\langle X^{i}\right\rangle _{t}(\omega)\le\int_{0}^{T}1_{\overline{A^c\cap B(0,R_\omega)}}\left\vert f\left(  X_{t}\right)  \right\vert d\left\langle X^{i}\right\rangle _{t}(\omega)<\infty$$
because $\overline{A^c\cap B(0,R_\omega)}$ is a compact set in $\mathbb R^N\backslash\Gamma$. Moreover
$$\int_{\mathbb{R}}\left(  \int_{\Gamma_{a}}\left\vert f\left(  x\right)  \right\vert Q_{A,\phi}^{i}\left(a,dx\right)  \right)  \mathcal{L}_{i,A,\phi}^{a}da(\omega)=\int_{\mathbb{R_+}}\left(  \int_{\Gamma^d_{a,R}}\left\vert f\left(  x\right)  \right\vert Q_{A,\phi}^{i}\left(a,dx\right)  \right)  \mathcal{L}_{i,A,\phi}^{a}da(\omega)$$
using that for each $a>0$ we have $\Gamma_a\cup\Gamma_{-a}=\Gamma^d_{a,R}$. So it is
$$\le \int_{\mathbb{R_+}}\left(  \int_{\Gamma^d_{a,R}}M_{a,R}(|f|) Q_{A,\phi}^{i}\left(a,dx\right)  \right)  \mathcal{L}_{i,A,\phi}^{a}da(\omega)=\int_{\mathbb{R_+}}\left( M_{a,R}(|f|)\right)  \mathcal{L}_{i,A,\phi}^{a}da(\omega)<\infty$$
because $\mathcal{L}_{i,A,\phi}^{a}$ is bounded.
\end{proof2}
\refstepcounter{theorem1}

\begin{remark}\label{R integ}
In the case of a Brownian motion $B$, using its explicit Gaussian density one can show that $P\left(\int_0^T|f(B_t)|dt<\infty\right)=1$ is true for functions $f$ of class $L^q_{loc}(\mathbb R^N)$ with $q>\frac{N}{2}\vee 1$ (see for instance \cite{Kry-Ro}). In a sense, the previous theorem gives us a more precise result, valid for all Brownian semimartingales and when the singularity set of $f$ is of a special type.
\end{remark}

\subsection{SDEs with singular coefficients}
The idea of the following example is taken from Cerny-Engelbert \cite{Cerny
Engelbert}, where a similar case is treated in dimension one.

The problem from which the example arises is to construct an \textit{example
of non-existence} for an SDE in $\mathbb{R}^{N}$ of the form%
\[
dX_{t}=b\left(  X_{t}\right)  dt+dW_{t},\qquad X_{0}=x_{0}%
\]
outside the present classes of $b$'s where existence is known, in order to
test the sharpness of such classes. We refer to the very general
result of \cite{Kry-Ro} which states that strong (local) existence (and
pathwise uniqueness) is known when $b\in L_{loc}^{p}\left(  \mathbb{R}%
^{N},\mathbb{R}^{N}\right)  $ for some $p>N\vee2$. Also the result of
existence of weak solutions of \cite{BassChen} for distributional drift, when
particularized to distributions realized by functions, gives the same class.
Thus it looks optimal, even for weak existence.

The function%
\[
b\left(  x\right)  =C\left\vert x\right\vert ^{-2}x
\]
is of class $L_{loc}^{p}\left(  \mathbb{R}^{N},\mathbb{R}^{N}\right)  $ only
for $p<N$, thus it is outside the boundary of the previous theory. We prove
that, in the particular case $C=-\frac{1}{2}$ and $x_{0}=0$, no weak solution exists.

First, by weak solution $(X,W)$ on $\left[  0,T\right]  $ (on a local random
time interval the argument is similar) we mean that there is a filtered
probability space $\left(  \Omega,\mathcal{A},\mathcal{F}_{t},P\right)  $, an
$\mathcal{F}_{t}$-Brownian motion $W$ in $\mathbb{R}^{d}$, an $\mathcal{F}%
_{t}$-adapted continuous process $\left(  X_{t}\right)  _{t\geq0}$ in
$\mathbb{R}^{d}$, such that
\[
\int_{0}^{T}\left\vert b\left(  X_{t}\right)  \right\vert dt<\infty
\]
and, a.s.,
\[
X_{t}=\int_{0}^{t}b\left(  X_{s}\right)  ds+W_{t}.
\]
Hence $X$ is a continuous semimartingale, with quadratic covariation
$\left\langle X^{i},X^{j}\right\rangle _{t}=\delta_{ij}t$ between its
components. Take (to be more precise than above)%
\[
b\left(  x\right)  =\left\{
\begin{array}
[c]{ccc}%
0 & \text{if} & x=0\\
-\frac{1}{2\left\vert x\right\vert ^{2}}x & \text{if} & x\neq0
\end{array}
\right.  .
\]
We shall write $b\left(  x\right)  =-\frac{1_{x\neq0}}{2\left\vert
x\right\vert ^{2}}x$.

\begin{proposition}
The SDE\ with this vector field $b$ and $x_{0}=0$ does not have any weak solution $(X,W)$.
\end{proposition}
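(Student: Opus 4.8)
The plan is to derive a contradiction by showing that a putative weak solution $X$ would have to spend an impossible amount of "occupation time" near the origin, using the occupation time formula of Theorem \ref{general occup} applied with the radial function $\phi(x)=|x|$ away from $0$. First I would observe that, by definition of weak solution, we are given $\int_0^T |b(X_t)|\,dt<\infty$ a.s., i.e.
\[
\int_0^T \frac{1_{X_t\neq 0}}{2|X_t|}\,dt<\infty\qquad\text{a.s.}
\]
(note $|b(x)|=\tfrac{1}{2|x|}$ for $x\neq 0$). So the strategy is to show that for the drift $b(x)=-\tfrac{1}{2|x|^2}x$ with $x_0=0$, this integral is in fact infinite with positive probability, which is the desired contradiction.

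Next I would apply It\^o's formula to the radial process. Set $R_t=|X_t|$; since $X^i$ has $\langle X^i,X^j\rangle_t=\delta_{ij}t$, a formal computation gives, for $R_t>0$,
\[
dR_t = \frac{X_t}{|X_t|}\cdot dX_t + \frac{N-1}{2R_t}\,dt
      = \frac{X_t}{|X_t|}\cdot\big(b(X_t)\,dt+dW_t\big)+\frac{N-1}{2R_t}\,dt.
\]
Since $\frac{X_t}{|X_t|}\cdot b(X_t) = -\frac{1}{2|X_t|} = -\frac{1}{2R_t}$, the drift of $R_t$ is $\frac{N-1}{2R_t}-\frac{1}{2R_t}=\frac{N-2}{2R_t}$, so $R_t$ behaves like a Bessel process of dimension $N-1$ (the radial drift of an $N$-dimensional Brownian motion is $\frac{N-1}{2R_t}$, and the extra drift subtracts one unit, lowering the effective dimension by one). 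This is precisely the one-dimensional mechanism of Cerny--Engelbert, lifted to $\mathbb R^N$ via the radial coordinate. I would make this rigorous by working on the set $\{R_t>0\}$ and controlling the behavior at $0$; here is where care is needed, since $X_0=0$ and we need to know whether $X$ leaves $0$ at all and, if it returns, how it behaves. The cleanest route is: either $R_t\equiv 0$ for all $t$, in which case $X\equiv 0$, but then $X_t=W_t$ (the drift integral is $0$) contradicts $W\not\equiv 0$; or $R$ is positive on some random interval, on which the SDE comparison with a Bessel process of dimension $N-1<2$ applies.

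The key point — and the main obstacle — is to extract divergence of $\int_0^T \frac{dt}{2R_t}$ from this picture. A Bessel process of dimension strictly less than $2$ hits $0$, and near its zeros the integral $\int \frac{dt}{R_t}$ diverges: indeed for $\delta$-dimensional Bessel with $\delta<2$ one has $\int_0^\epsilon \frac{dt}{R_t}=\infty$ a.s. whenever $R_0=0$, because $R_t\sim t^{1/2}$ in distribution and the Bessel process is "too small" near its zero set. To tie this to the occupation formula as hinted by the paper's placement: I would alternatively use Theorem \ref{general occup} with $A=\mathbb R^N\setminus\overline{B(0,\varepsilon)}$ and $\phi(x)=|x|$, for which $|\nabla\phi|=1$ on $A$, to write the occupation measure of $1/(2|X_t|)$ as $\int_\varepsilon^\infty \tfrac{1}{2a}\,\mathcal L^a_{1,A,\phi}\,da$; but since $\mathcal L^a_{1,A,\phi}$ is related to the local time of $R_t=|X_t|=\phi(X_t)$ at level $a$, and the local time of a sub-two-dimensional Bessel process at $0^+$ does not vanish fast enough as $a\to0$, letting $\varepsilon\to0$ forces the integral to diverge. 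Concretely, by the occupation time formula for the one-dimensional semimartingale $R_t$,
\[
\int_0^T \frac{1_{R_t>0}}{2R_t}\,dt = \int_0^\infty \frac{1}{2a}\,L_T^a(R)\,da,
\]
and the continuity (or at least the non-degeneracy as $a\downarrow 0$) of $a\mapsto L_T^a(R)$ for the Bessel-type process $R$ — which is positive at $a=0^+$ precisely because the effective dimension is $<2$ so $R$ accumulates local time at its zeros — makes $\int_0^{1}\frac{1}{2a}L_T^a(R)\,da=\infty$ with positive probability. This contradicts $\int_0^T|b(X_t)|\,dt<\infty$ a.s., completing the proof. The delicate steps are (a) justifying the It\^o computation for $R_t$ up to and including the zero set of $R$, i.e.\ showing no local-time correction at $0$ survives or handling it, and (b) the quantitative lower bound on $L_T^a(R)$ as $a\to0$, for which I would invoke the known behavior of low-dimensional Bessel processes or argue directly via the scaling $R_t\approx \sqrt t$.
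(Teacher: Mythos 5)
The paper's proof takes a much more elementary route than you do, and avoids the difficulties you flag. The paper applies It\^{o} to $|X_t|^2$ directly: the radial drift kills the quadratic correction, giving $d|X_t|^2 = 1_{X_t=0}\,dt + 2X_t\cdot dW_t$. Then Theorem \ref{general occup} with the trivial choice $A=\mathbb R^N$, $\phi(x)=x_1$ shows $\int_0^T 1_{X_t=0}\,dt=0$. Hence $|X_t|^2=\int_0^t 2X_s\cdot dW_s$ is a \emph{nonnegative local martingale null at $0$}, so $X\equiv 0$ --- which contradicts $\langle X^i,X^j\rangle_t=\delta_{ij}t$. Notice which assumption the paper contradicts: it is the quadratic covariation coming from $W$, \emph{not} the integrability $\int_0^T|b(X_t)|\,dt<\infty$ that you target. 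The occupation formula plays a modest role (eliminating the Lebesgue-null indicator term), not the central quantitative role you propose.

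Your route, via the radial process $R_t=|X_t|$ and divergence of $\int_0^T dt/R_t$, has a genuine gap. You assert that for a Bessel process of dimension $\delta<2$ started at zero, $\int_0^\varepsilon dt/R_t=\infty$ a.s. This is \emph{false} on the range $\delta\in(1,2)$: although such a process hits zero, its occupation density at level $a$ decays like $a^{\delta-1}$ near $a=0$, so $\int_0^1 a^{-1} L_T^a(R)\,da \sim \int_0^1 a^{\delta-2}\,da$ converges for $\delta>1$. The divergence you want holds only for $\delta\le 1$, i.e.\ only for $N\le 2$. (Your heuristic ``$R_t\sim t^{1/2}$, hence the integral diverges'' actually points the other way: $\int_0^T t^{-1/2}\,dt<\infty$; the true mechanism in the low-dimensional case is accumulated local time at the zero set, not behavior at the single initial zero.) Moreover, even for $N\le 2$ your plan requires the fine theory of Bessel processes and a careful treatment of the It\^{o} decomposition of $R_t$ across its zero set --- exactly the technical issues you flag as delicate --- whereas the paper's argument sidesteps all of this by working with $|X_t|^2$, which is smooth, and by invoking the elementary fact that a nonnegative local martingale started at zero is identically zero. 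In short: different target contradiction, a dimension restriction you do not acknowledge, and a Bessel estimate stated with the wrong threshold.
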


\begin{proof}
Assume, by contradiction, that $\left(  X,W\right)  $ is a weak solution. By
It\^{o} formula,
\begin{align*}
d\left\vert X_{t}\right\vert ^{2}  &  =-1_{X_{t}\neq0}dt+2X_{t}\cdot
dW_{t}+dt=1_{X_{t}=0}dt+2X_{t}\cdot dW_{t}.
\end{align*}
From Theorem \ref{general occup} with $A=\mathbb{R}^{N}$
and $\phi\left(  x\right)  =x_{1}$, we get%
\[
\int_{0}^{T}1_{X_{t}=0}dt=\int_{\mathbb{R}}\left(  \int_{\Gamma_{a}%
}1_{\left\{  0\right\}  }\left(  x\right)  Q_{A,\phi}\left(  a,dx\right)  \right)
\mathcal{L}_{A,\phi}^{a}da\leq\int_{\mathbb{R}}\eta\left(  a\right)  \mathcal{L}%
_{A,\phi}^{a}da=0
\]
where $\eta\left(  0\right)  =Q_{A,\phi}\left(  0,\left\{  0\right\}  \right)
\leq1$, $\eta\left(  a\right)  =0$ for $a\neq0$ and we omitted the (identical) dependence by $i\in 1,\dots, N$. Hence%
\[
\left\vert X_{t}\right\vert ^{2}=\int_{0}^{t}2X_{s}\cdot dW_{s}.
\]
Therefore $\left\vert X_{t}\right\vert ^{2}$ is a positive local martingale,
null at $t=0$. This implies that $\left\vert X_{t}\right\vert ^{2}\equiv0$
hence $X_{t}\equiv0$. But this contradicts the fact that $\left\langle
X^{i},X^{j}\right\rangle _{t}=\delta_{ij}t$.
\end{proof}

\begin{remark}
The property $\int_{0}^{T}1_{X_{t}=0}dt=0$, where we have used our multidimensional occupation
time formula, can be proved also in other ways. The point of this example is
not to show a striking application where the occupation time formula is
strictly necessary but an example where it can be used to prove something
useful and non-trivial in a line.
\end{remark}

\section{Embedding of a manifold $\Gamma$ in a foliation}\label{Sect embed}
In some examples we have a process $X$ in $\mathbb{R}^{N}$, a
function $f:\mathbb{R}^{N}\rightarrow\mathbb{R}$ for which we want to consider
$\int_{0}^{T}f\left(  X_{t}\right)  d\left\langle X^{i}\right\rangle _{t}$,
and an $(N-1)-$dimensional manifold $\Gamma$ in $\mathbb{R}^{N}$ where $f$ is singular (see Section \ref{Sect applications}).

In this section we pose the problem to embed $\Gamma$ in a foliation $\left\{
\Gamma_{a};\ a\in\mathbb{R}\right\}$ given by level sets of some function
$\phi:\mathbb{R}^{N}\rightarrow\mathbb{R}$, satisfying the hypotheses of Theorem \ref{Thm occupation time formula localized}. In order to solve this problem, we propose to use as $\phi$ a smooth
extension of the \textit{signed distance function}. This requires that
$\Gamma$ is an orientable manifold with some other simple properties. The
signed distance function then exists of class $C^{2}$ in a neighborhood of
$\Gamma$ and we may extend it on the full space, still of class $C^{2}$, equal
to zero outside a larger neighborhood.

This construction is however insufficient to prove Theorem \ref{Thm L=L=L}, which is the second aim of this section. An extension
equal to zero does not work because it affects the sets $\phi^{-1}\left(
a\right)  $ for small $a$ (to be precise, Proposition \ref{good}, part (iii) would
fail and it plays a basic role in the proof of Theorem \ref{Thm L=L=L}, see identity (\ref{identityA})).
We then choose to work with the \textit{distance function} $d\left(
\cdot,\Gamma\right)  $, suitably extended outside a neighborhood $\mathcal{V}$
of $\Gamma$. Since it is not smooth on $\Gamma$, we have to overcome some
difficulties. At the end we develop the necessary geometric preliminaries to
prove Theorem \ref{Thm L=L=L} and later on Theorem \ref{Thm L=L} in Section \ref{2}.

\subsection{Construction of $\phi$ given a manifold $\Gamma$ of class $C^2$}\label{2_4}
Given the manifold $\Gamma$, there are several functions $\phi$ such that $\left\{
\phi=0\right\}=\Gamma$. Basic requisite for us is that $\phi\left(
X\right)  $ is a continuous semimartingale. This is clearly achieved if $\phi$
is of class $C^{2}$ but there are interesting cases in which, in order to have
other properties of $\phi$, we have to give up with a full $C^{2}$-regularity.

The second requisite on $\phi$ is a form on non-degeneracy in order to have
that $\phi\left(  X\right)  $ controls $X$ in quadratic variation on a set of
interest $A$ (typically a neighborhood of $\Gamma$). The key for non-degeneracy is
that $\nabla\phi$ should not vanish (not too much)\ in $A$. Along with the
requirement $\left\{  \phi=0\right\}  =\Gamma$, this means that we have to
look for non-trivial functions $\phi$.

Finally, in Section \ref{Sect distance with sign}, we shall see the advantage
of having further properties, related to the \textit{eikonal equation}%
\[
\left\vert \nabla\phi\left(  x\right)  \right\vert =1
\]
(aimed to hold at least locally around $\Gamma$). Thus we pose in this section
the following question: given a manifold $\Gamma$, construct a function
$\phi:\mathbb{R}^{N}\rightarrow\mathbb{R}$ such that
\begin{itemize}
  \item $\left\{  \phi=0\right\}=\Gamma$,
  \item $\phi\left(  X\right)  $ is a continuous semimartingale,
  \item $\left\vert \nabla\phi\right\vert =1$ in a neighborhood of $\Gamma$.
\end{itemize}

Natural candidates are the distance function, $x\mapsto d\left(  x,\Gamma\right)  $
and the signed distance function, when defined. The advantage of the distance
function is that it is globally and elementary defined in full generality on
$\Gamma$, but when $x$ crosses $\Gamma$ it is not differentiable (it is also
non-differentiable far from $\Gamma$, but this is less relevant). The drawback
of the signed distance function is first of all the difficulty to define it,
but then it has the advantage of some smoothness also around $\Gamma$. But let us first mention a case when the signed distance
has an obvious definition.

\begin{example}
\label{Example boundary}Let $D$ be a non-empty open set in $\mathbb{R}^{N}$
with non-empty complementary set $D^{c}$. Let $\Gamma$ be the boundary of $D$.
We call signed distance function from $\Gamma$ the function%
\[
\delta_{\Gamma}\left(  x\right)  =\left\{
\begin{array}
[c]{ccc}%
d\left(  x,\Gamma\right)   & \text{if} & x\in D\\
-d\left(  x,\Gamma\right)   & \text{if} & x\in D^{c}%
\end{array}
\right.  .
\]
If $\Gamma$ is piecewise smooth, then $\delta_{\Gamma}$ is differentiable a.e.
and, where it is differentiable, it satisfies $\left\vert \nabla\delta
_{\Gamma}\left(  x\right)  \right\vert =1$.\ If $\Gamma$ is of class $C^{2}$,
then there is a neighborhood $\mathcal{U}$ of $\Gamma$ where $\delta_{\Gamma}$ is
of class $C^{2}$, and the neighborhood can be taken of the form%
\[
\mathcal{U}_{\varepsilon}=\left\{  x\in\mathbb{R}^{N}:d\left(  x,\Gamma
\right)  <\varepsilon\right\}
\]
for some $\varepsilon>0$ if $D$ is bounded. A discussion of these and other
properties can be found in \cite{GilbTrud}.
\end{example}

Inspired by these properties, let us axiomatize some properties of a signed
distance function from a general manifold, so that it will be more clear what
we use in each general statement. For the definition of embedded manifold see \cite{Kosinski}.

\begin{notation}
If $\Gamma$ is an $(N-1)$-dimensional orientable manifold of class $C^{2}$, closed and without boundary and embedded in $\mathbb R^N$, then we call it a leaf-manifold.
\end{notation}

\begin{proposition}\label{def dist sign}
Let $\Gamma$ be a leaf-manifold. Then there exist an open neighborhood $\mathcal{U}$ of $\Gamma$ and a function $\delta_{\Gamma}%
:\mathcal{U}\rightarrow\mathbb{R}$ such that:
\begin{description}
  \item[i)] $\delta_{\Gamma}\in C^{2}\left(  \mathcal{U}\right)  $
  \item[ii)] $\left\vert \delta_{\Gamma}\left(  x\right)  \right\vert =d\left(x,\Gamma\right)  $ for all $x\in\mathcal{U}$.
\end{description}
This (not unique) function is called signed distance function $\delta_{\Gamma}$ on the open neighborhood $\mathcal{U}$ of $\Gamma$.
\end{proposition}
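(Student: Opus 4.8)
The plan is to build $\delta_{\Gamma}$ and $\mathcal{U}$ using the tubular neighborhood theorem for the orientable hypersurface $\Gamma$. Since $\Gamma$ is a leaf-manifold — orientable, closed, without boundary, of class $C^2$ and embedded in $\mathbb{R}^N$ — there is a globally defined unit normal vector field $n:\Gamma\to\mathbb{R}^N$ of class $C^1$ (this is where orientability is used). First I would invoke the tubular neighborhood theorem: the map $\Psi:\Gamma\times(-\varepsilon_0,\varepsilon_0)\to\mathbb{R}^N$, $\Psi(p,s)=p+s\,n(p)$, is a $C^1$-diffeomorphism onto an open neighborhood of $\Gamma$ for $\varepsilon_0$ small enough (small depending on curvature bounds, which exist because $\Gamma$ is $C^2$ and closed, hence compact or at least has locally uniform geometry; in the closed-without-boundary case one typically gets a uniform $\varepsilon_0$). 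I then set $\mathcal{U}_0=\Psi\bigl(\Gamma\times(-\varepsilon_0,\varepsilon_0)\bigr)$ and define $\delta_{\Gamma}:=\pi_2\circ\Psi^{-1}$, i.e.\ $\delta_{\Gamma}(x)=s$ where $x=\Psi(p,s)$.

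Next I would verify property (ii), that $|\delta_{\Gamma}(x)|=d(x,\Gamma)$ on $\mathcal{U}_0$. This is the standard fact that, inside a tubular neighborhood, the foot-point $p$ realizing the distance is unique and $d(x,\Gamma)=|x-p|=|s|$; it follows from the choice of $\varepsilon_0$ below the ``reach'' of $\Gamma$, and one proves it by a short first-variation argument showing any nearest point must lie along the normal line through $x$. Then for property (i), the $C^2$-regularity of $\delta_{\Gamma}$: a priori the construction via $\Psi$ only gives $C^1$, because $n$ is only $C^1$ when $\Gamma$ is $C^2$. The cleaner route is to note that in the tubular neighborhood the eikonal equation $|\nabla\delta_{\Gamma}|=1$ holds and $\nabla\delta_{\Gamma}(x)=n(p(x))$, and then to bootstrap: writing the foot-point map $x\mapsto p(x)$ and using the formula $\delta_{\Gamma}(x)= \langle x-p(x), n(p(x))\rangle$ together with the fact that $\Gamma$ being $C^2$ makes the second fundamental form $C^0$, one checks $D\delta_{\Gamma}=n\circ p$ is $C^1$, hence $\delta_{\Gamma}\in C^2$. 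Alternatively, and perhaps more efficiently, one can cite the classical result (e.g.\ Gilbarg–Trudinger, as the paper already does in Example \ref{Example boundary}) that for a $C^k$ hypersurface with $k\ge 2$ the signed distance is $C^k$ in a one-sided-or-two-sided neighborhood; here one applies it to each side.

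Finally, I would shrink if necessary: take $\mathcal{U}\subseteq\mathcal{U}_0$ to be any open neighborhood of $\Gamma$ on which the above properties hold (if the uniform-$\varepsilon_0$ argument is delicate for non-compact $\Gamma$, one can take $\mathcal{U}=\{x:\ d(x,\Gamma)<\rho(x)\}$ for a suitable positive continuous function $\rho$, but since a closed manifold without boundary in $\mathbb{R}^N$ is in fact compact, $\varepsilon_0$ can be taken constant and $\mathcal{U}=\mathcal{U}_{\varepsilon_0}$). The non-uniqueness remark in the statement is automatic: the extension/choice of sign orientation and the size of $\varepsilon_0$ are not canonical.

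The main obstacle I expect is the $C^2$-regularity claim (property (i)): establishing that the unit normal $n$ is genuinely $C^1$ on a $C^2$ manifold and that this propagates through the foot-point map to give $\delta_{\Gamma}\in C^2$, without losing a derivative, requires care — the honest options are either a careful implicit-function-theorem computation on $\Psi$ combined with the chain rule, or a direct appeal to the known regularity theory for the distance function to a $C^2$ hypersurface. Everything else (existence of the tubular neighborhood, the distance identity) is standard differential geometry.
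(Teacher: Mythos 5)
Your construction (a global unit normal field $n$, the tubular map $\Psi(p,s)=p+s\,n(p)$, the signed distance as the second coordinate of $\Psi^{-1}$, and Gilbarg--Trudinger or a bootstrap for the $C^{2}$ regularity) is in the same spirit as the paper's proof, which also builds $\delta_{\Gamma}$ from tubular neighborhoods and uses orientability to fix the sign. The genuine problem is the step where you take $\varepsilon_{0}$ constant on the grounds that ``a closed manifold without boundary in $\mathbb{R}^{N}$ is in fact compact.'' In this paper ``closed'' means closed as a subset of $\mathbb{R}^{N}$, not compact: Section \ref{unif} explicitly treats the graph of $y=\sin x^{2}$ in $\mathbb{R}^{2}$ as a leaf-manifold which does satisfy Proposition \ref{def dist sign} but admits no uniform neighborhood $\mathcal{U}_{\varepsilon}(\Gamma)$ (the neighborhood must shrink at infinity), and compactness or positive reach is introduced there as an \emph{additional} hypothesis precisely because it does not follow from the definition. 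Consequently the uniform tubular neighborhood theorem you invoke is not available in general, $\mathcal{U}=\mathcal{U}_{\varepsilon_{0}}(\Gamma)$ cannot be taken, and your main-line argument fails for non-compact $\Gamma$.

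The fix is the variable-width alternative you mention only parenthetically, and it is essentially what the paper does: for each $x\in\Gamma$ one uses the compactness of $\Gamma\cap\overline{B(x,\varepsilon_{0})}$ to obtain a local tubular neighborhood $U_{x}$ of positive width on which the distance to $\Gamma\cap B(x,\varepsilon_{1})$ coincides with the distance to $\Gamma$, defines a local signed distance $\delta^{x}_{\Gamma}$ there via the orientation of the normal bundle, observes that orientability makes the local signs compatible on overlaps, and finally sets $\mathcal{U}=\bigcup_{x\in\Gamma}U_{x}$ with $\delta_{\Gamma}$ obtained by gluing. If you rewrite your argument with a width function $\rho(x)$ (or with this local-to-global gluing) in place of a constant $\varepsilon_{0}$, the remainder of what you wrote --- uniqueness of the foot point below the local reach, the identity $\left\vert\delta_{\Gamma}\right\vert=d(\cdot,\Gamma)$, and the $C^{2}$ regularity via the classical result for the signed distance to a $C^{2}$ hypersurface --- goes through; in fact your discussion of the $C^{2}$ issue is more explicit than the paper's, which simply asserts property (i) for the local $\delta^{x}_{\Gamma}$ with a reference to tubular neighborhoods.
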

Proof in Appendix A.

\begin{lemma}\label{prop dist}
Following Proposition \ref{def dist sign} assume that the manifold $\Gamma$ has a signed distance function $\delta_{\Gamma}$ on
$\mathcal{U}$. Then one has:
\begin{description}
  \item[iii)] on every connected component of $\mathcal{U}\backslash\Gamma$, the
function $\delta_{\Gamma}\left(  \cdot\right)  $ is either identically equal
to $d\left(  \cdot,\Gamma\right)  $ or to $-d\left(  \cdot,\Gamma\right)  $
  \item[iv)] $d\left(  x,\Gamma\right)  $ is of class $C^{2}$ on $\mathcal{U}%
\backslash\Gamma$
  \item[v)] each $x\in\mathcal{U}$ has a unique point
$P_{\Gamma}\left(  x\right)  \in\Gamma$ of minimal distance.
  \item[vi)] $\left\vert \nabla\delta_{\Gamma}\left(  x\right)  \right\vert =1$ for all
$x\in\mathcal{U}$
\end{description}
\end{lemma}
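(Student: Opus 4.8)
The statement to prove is Lemma~\ref{prop dist}, establishing properties (iii)--(vi) of the signed distance function $\delta_\Gamma$ on the neighborhood $\mathcal{U}$ guaranteed by Proposition~\ref{def dist sign}. I have properties (i) $\delta_\Gamma \in C^2(\mathcal{U})$ and (ii) $|\delta_\Gamma| = d(\cdot,\Gamma)$ on $\mathcal{U}$.

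The plan is to deduce (iii)--(vi) from the two properties already in hand, namely (i) $\delta_\Gamma\in C^2(\mathcal U)$ and (ii) $|\delta_\Gamma|=d(\cdot,\Gamma)$ on $\mathcal U$ from Proposition~\ref{def dist sign}, together with the elementary geometry of the Euclidean distance to a closed set.

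First I would prove (iii). Since $\Gamma$ is a closed subset of $\mathbb R^N$, we have $d(x,\Gamma)>0$ for every $x\notin\Gamma$, so by (ii) the continuous function $\delta_\Gamma$ never vanishes on $\mathcal U\backslash\Gamma$; on each connected component of $\mathcal U\backslash\Gamma$ it therefore has constant sign, and there it equals $|\delta_\Gamma|=d(\cdot,\Gamma)$ if that sign is positive and $-d(\cdot,\Gamma)$ if it is negative. Property (iv) is then immediate: around any point of $\mathcal U\backslash\Gamma$ a sufficiently small ball avoids the closed set $\Gamma$, is contained in $\mathcal U$, and lies in a single component, so on that ball $d(\cdot,\Gamma)=\pm\delta_\Gamma$, which is of class $C^2$ by (i).

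The core of the argument is an elementary fact which I would isolate first. If $x_0\in\mathcal U\backslash\Gamma$ and $p\in\Gamma$ realizes the distance, i.e. $|x_0-p|=d(x_0,\Gamma)$ (such a $p$ exists by a minimizing-sequence argument, using that $\Gamma$ is closed), then, $d(\cdot,\Gamma)$ being differentiable at $x_0$ by (iv), one has $\nabla d(x_0,\Gamma)=(x_0-p)/|x_0-p|$; in particular $|\nabla d(x_0,\Gamma)|=1$. Indeed, the function $h(x)=|x-p|-d(x,\Gamma)$ is non-negative everywhere (since $p\in\Gamma$), vanishes at $x_0$, and is differentiable at $x_0$ (both terms are, the first because $x_0\neq p$, the second by (iv)), so $\nabla h(x_0)=0$, which is exactly the asserted identity.

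From this, (v) and (vi) follow quickly. For (vi): on $\mathcal U\backslash\Gamma$ we have $|\nabla\delta_\Gamma|=|\nabla d(\cdot,\Gamma)|=1$ by (iii) and the preliminary fact; since $\Gamma$, being an $(N-1)$-dimensional embedded submanifold, has empty interior in $\mathbb R^N$, the set $\mathcal U\backslash\Gamma$ is dense in $\mathcal U$, and continuity of $\nabla\delta_\Gamma$ (guaranteed by (i)) extends the identity $|\nabla\delta_\Gamma|=1$ to all of $\mathcal U$. For (v): a point $x_0\in\Gamma$ trivially has $x_0$ itself as its only point of minimal distance; for $x_0\in\mathcal U\backslash\Gamma$, if $p_1,p_2\in\Gamma$ both realize the distance then the preliminary fact gives $(x_0-p_1)/|x_0-p_1|=\nabla d(x_0,\Gamma)=(x_0-p_2)/|x_0-p_2|$, and since $|x_0-p_1|=|x_0-p_2|=d(x_0,\Gamma)$ this forces $p_1=p_2$; we call the unique minimizer $P_\Gamma(x_0)$. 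I do not expect a genuine obstacle: the only point requiring a little care is the preliminary fact itself — existence of a nearest point (closedness of $\Gamma$) and the passage from "$h$ has a minimum at $x_0$" to the gradient identity, legitimately using the $C^2$-regularity of $d(\cdot,\Gamma)$ on $\mathcal U\backslash\Gamma$ provided by (iv) — since (v) and (vi) are then immediate.
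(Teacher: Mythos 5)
Your proof is correct, and it reaches (v) and (vi) by a route that is genuinely different from the paper's. For (iii) and (iv) you do the same thing as the authors: the locally constant sign of $\delta_\Gamma$ on $\mathcal U\backslash\Gamma$ (they phrase it as the ratio $\delta_\Gamma/d$ being a continuous $\pm1$-valued function), and then (iv) is immediate. The divergence is in how the gradient of the distance function is identified. The paper invokes the Mises theorem formula (\ref{ceco}) for one-sided directional derivatives of $d(\cdot,F)$ to a closed set $F$, substitutes appropriate unit vectors, and derives a contradiction with differentiability to get (v), then takes a maximum over directions to get (vi). You instead isolate the elementary fact that if $p$ realizes $d(x_0,\Gamma)$ then $h(x)=|x-p|-d(x,\Gamma)$ is nonnegative, vanishes at $x_0$, and is differentiable at $x_0$ (by (iv) and $x_0\neq p$), so its gradient there vanishes and $\nabla d(x_0,\Gamma)=(x_0-p)/|x_0-p|$; both (v) and (vi) on $\mathcal U\backslash\Gamma$ drop out of this single identity, and (vi) on $\Gamma$ follows by density and continuity of $\nabla\delta_\Gamma$ exactly as in the paper. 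Your version is more self-contained — it does not lean on the Mises directional-derivative formula, which the paper states and uses elsewhere — and it avoids the slightly awkward Cauchy--Schwarz bookkeeping in the paper's proof of (v). The paper's version has the virtue of reusing machinery (\ref{ceco}) it has already introduced for other purposes. Both arguments ultimately hinge on the same geometric input: that at a differentiability point of $d(\cdot,\Gamma)$ the gradient is the unit vector pointing away from the (necessarily unique) nearest point.
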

Proof in Appendix A.

\begin{remark}
In Proposition \ref{def dist sign} we imposed that $\Gamma$ has no boundary since it would be incompatible
with the required properties just in the simplest case of $\Gamma$ equal to the closed $(N-1)$-dimensional disk. Indeed in general if $\mathcal U\backslash \Gamma$ has only a single connected component, by property (iii) we would have that $\delta_\Gamma$ is equal to $d\left(  \cdot,\Gamma\right)$ (or to $-d\left(  \cdot,\Gamma\right)$) in the whole $\mathcal U\backslash \Gamma$, so by continuity it is true also in $\mathcal U$, and this is a contradiction with its $C^2$ regularity.
\end{remark}

The first aim of this subsection was to construct a function $\phi$, out of a
manifold $\Gamma$, in order to apply Theorem \ref{general occup}. If $\Gamma$ is a leaf
manifold, we have solved this problem. Indeed, let $\mathcal{U}$ and
$\delta_{\Gamma}$ be given by Proposition \ref{def dist sign}. Let $\mathcal{V}$ be a
neighborhood of $\Gamma$ such that $\overline{\mathcal{V}}\subset\mathcal{U}$.
There exists a $C^{2}$ function $\phi:\mathbb{R}^{N}\rightarrow\mathbb{R}$
such that $\phi=\delta_{\Gamma}$ on $\mathcal{V}$ and $\phi=0$ outside
$\mathcal{U}$. Take $A=\mathcal{V}$. Then, by Lemma \ref{prop dist} part (vi) we have
$\inf_{x\in A}\left\vert \nabla\phi\left(  x\right)  \right\vert >0$. All the
assumptions of Theorem \ref{general occup} hold true. To summarize:

\begin{corollary}\label{trivial}
Let $X$ is a continuous semimartingale in $\mathbb R^N$ such that $cI_n\le g_t\le CI_N$ a.s$.$ in $(t,\omega)$ for some constants $C>c>0$. Let $\Gamma$ be leaf-manifold. Let $\mathcal U$ be given by Proposition \ref{def dist sign} and let $\mathcal V$ be an open neighborhood of $\Gamma$ such that $\overline{\mathcal V}\subset \mathcal U$. Let $\phi\in C^{2}\left(  \mathbb{R}^{N}\right)  $ be an extension of $\delta_{\Gamma}$ from $\overline {\mathcal V}$. Then the assumptions of Theorem \ref{general occup} are satisfied with $A=\mathcal V$.
\end{corollary}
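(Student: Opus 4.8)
The plan is to verify directly the three hypotheses of Theorem \ref{general occup} with the open set $A=\mathcal V$. Two of them come for free: the uniform ellipticity bound $cI_N\le g_t\le CI_N$ a.s$.$ in $(t,\omega)$ is part of the hypotheses, and $\phi\in C^2(\mathbb R^N)$ is given. So the only thing I need to check is the non-degeneracy condition $\inf_{x\in\mathcal V}\left\vert\nabla\phi(x)\right\vert>0$.

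For this I would exploit that $\phi$ coincides with $\delta_\Gamma$ on $\overline{\mathcal V}$ and that the gradient is a local operator: since $\mathcal V$ is open, for every $x\in\mathcal V$ the functions $\phi$ and $\delta_\Gamma$ agree on the open neighborhood $\mathcal V$ of $x$, whence $\nabla\phi(x)=\nabla\delta_\Gamma(x)$. Now $\mathcal V\subset\overline{\mathcal V}\subset\mathcal U$, so Lemma \ref{prop dist}, part (vi), applies and yields $\left\vert\nabla\delta_\Gamma(x)\right\vert=1$ for all $x\in\mathcal U$, hence for all $x\in\mathcal V$. Therefore $\left\vert\nabla\phi(x)\right\vert=1$ on $\mathcal V$ and $\inf_{x\in\mathcal V}\left\vert\nabla\phi(x)\right\vert=1>0$, so all three assumptions of Theorem \ref{general occup} hold with $A=\mathcal V$, which is the claim. (Equivalently, one may phrase the last step through Corollary \ref{cor ipo}.)

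The only point that requires a word is the existence of the $C^2$ extension $\phi$ of $\delta_\Gamma$ from $\overline{\mathcal V}$. I would construct it in the usual way: after slightly shrinking $\mathcal V$ if necessary so that $\overline{\mathcal V}$ is a closed set at positive distance from the complement of $\mathcal U$, choose a $C^\infty$ cutoff $\chi\colon\mathbb R^N\to[0,1]$ with $\chi\equiv1$ on $\overline{\mathcal V}$ and support contained in $\mathcal U$, and set $\phi:=\chi\,\delta_\Gamma$ on $\mathcal U$ and $\phi:=0$ on $\mathbb R^N\setminus\mathcal U$. The product of a $C^\infty$ function with $\delta_\Gamma\in C^2(\mathcal U)$ is $C^2$, and gluing with the zero function across $\partial\mathcal U$ preserves $C^2$-regularity since $\chi$ vanishes near $\partial\mathcal U$; thus $\phi\in C^2(\mathbb R^N)$ and $\phi=\delta_\Gamma$ on $\overline{\mathcal V}$. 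There is really no serious obstacle here: the argument is pure bookkeeping, the only mild care being the locality of the identity $\nabla\phi=\nabla\delta_\Gamma$ on $\mathcal V$ and the construction of the cutoff, both entirely routine.
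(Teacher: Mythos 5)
Your proposal is correct and follows essentially the same route as the paper: it reduces the claim to the non-degeneracy condition $\inf_{x\in\mathcal V}|\nabla\phi(x)|>0$, which it obtains from Lemma~\ref{prop dist}, part (vi), after observing that $\nabla\phi=\nabla\delta_\Gamma$ on the open set $\mathcal V$ by locality of the gradient. The only superfluous part is the discussion of how to construct the $C^2$ extension $\phi$, since in the statement of the corollary $\phi$ is given as part of the hypotheses; but this is harmless and mirrors the paper's own preceding remarks.
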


\subsubsection{Uniform neighborhoods $\mathcal U_\varepsilon(\Gamma)$}\label{unif}
We investigate a slightly more restrictive condition since it will turn out to be relevant in Section \ref{2}, and useful in some of the next proofs.

\begin{notation}
Given $\varepsilon>0$ and a set $S$, we denote by $\mathcal{U}_{\varepsilon}\left(
S\right)  $ the open neighborhood of $S$ or radius $\varepsilon$: the set
of all $x\in\mathbb{R}^{N}$ such that $d\left(  x,S\right)  <\varepsilon$.
\end{notation}

\begin{remark}
When $\mathcal U=\mathcal U_\varepsilon(\Gamma)$ in Proposition \ref{def dist sign}, we may choose $\mathcal V=\mathcal U_{\varepsilon_1}(\Gamma)$ with $\varepsilon_1<\varepsilon$, in Corollary \ref{trivial}. We shall always make this choice, below.
\end{remark}

Not all $C^2$ orientable manifolds $\Gamma$ have a ``uniform'' neighborhood of the form
$\mathcal U_\varepsilon (\Gamma)$ satisfying the conditions of Proposition \ref{def dist sign}. For instance, in $\mathbb R^2$, the
graph of the function $y = \sin x^2$ has a neighborhood $\mathcal U$ as in Proposition \ref{def dist sign} but not
a neighborhood of the form $\mathcal U_\varepsilon (\Gamma)$ ($\mathcal U$ has to shrink at infinity). Anyways if $\Gamma$ is compact, we can always take a neighborhood $\mathcal V$ of $\Gamma$ such that it is bounded and $\overline V\subseteq \mathcal U$; hence we can define $\varepsilon_0:=\max_{x\in \overline{\mathcal V}} d(x,\Gamma)$ and restrict the signed distance on the set $\mathcal U_{\varepsilon_0} (\Gamma)\subset \mathcal U$.

We discuss now a general class of manifolds which fulfill Proposition \ref{def dist sign} with $\mathcal U=\mathcal U_\varepsilon(\Gamma)$. A subset $S$ of $\mathbb R^N$ is called proximally smooth, or with positive reach, if exists $\varepsilon>0$ such that for each $x\in\mathcal{U}_{\varepsilon}(S)$ (defined above) there exists a unique minimizer of the distance function from $x$ to $S$ (see also \cite{Crasta}). This number $\varepsilon$ is called the reach of the set $S$.

We remind here the Mises theorem (see \cite{Zajiek}): it states that for each closed set $F\in \mathbb R^N$ the one-sided directional derivatives $D_v d(x,F):=\lim_{\varepsilon\rightarrow 0+} \frac{d(x+\varepsilon v,F)-d(x,F)}{\varepsilon}$ are well defined for all $x\in\mathbb R^N\backslash F$, $v\in\mathbb R^N$. In particular if we call $P_F(x)$ the set of metric projections of $x\notin F$ on $F$, we have that $\forall v\in\mathbb R^N$,
\begin{align}\label{ceco}
D_v d(x,F)=\inf\left\{\frac{v\cdot(x-y)}{|x-y|},\ y\in P_F(x)\right\}.
\end{align}

Using (\ref{ceco}) we obtain that the distance from a set $S$ with positive reach is of class $C^1$ in $\mathcal U_\varepsilon(S)\backslash S$ (indeed the function $P_F$ is continuous). But in the case of $S$ also being a leaf-manifold, we can prove $C^2$ regularity.

\begin{proposition}\label{posit}
Let $\Gamma$ be a leaf-manifold with positive reach $\varepsilon$. Then it is possible to define a signed distance on the open neighborhood $\mathcal U_\varepsilon(\Gamma)$.
\end{proposition}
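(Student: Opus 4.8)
The plan is to upgrade the $C^1$ regularity of $d(\cdot,\Gamma)$ on $\mathcal U_\varepsilon(\Gamma)\setminus\Gamma$ (already available from the Mises formula and continuity of the metric projection, as noted before the statement) to $C^2$ regularity, and then to construct the sign so that the resulting function $\delta_\Gamma$ extends across $\Gamma$ with the required $C^2$ smoothness. The essential geometric input is that a $C^2$ leaf-manifold with positive reach $\varepsilon$ admits a well-defined \emph{normal tubular neighborhood} of radius $\varepsilon$: since $\Gamma$ is orientable, closed, and without boundary, there is a global unit normal field $n\in C^1(\Gamma,\mathbb R^N)$, and the normal exponential map $\Psi(p,s)=p+s\,n(p)$, defined on $\Gamma\times(-\varepsilon,\varepsilon)$, is a $C^1$ diffeomorphism onto $\mathcal U_\varepsilon(\Gamma)$. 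Injectivity on this range is exactly the positive-reach hypothesis, because if $\Psi(p_1,s_1)=\Psi(p_2,s_2)=x$ with $p_1\ne p_2$ then $x$ would have two distinct metric projections onto $\Gamma$, contradicting $d(x,\Gamma)<\varepsilon\le\mathrm{reach}(\Gamma)$; that $\Psi$ is a local $C^1$ diffeomorphism follows from computing its differential (it is invertible away from focal points, and the focal distance is bounded below by the reach).

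Granting this, I would \emph{define} $\delta_\Gamma:=s\circ\Psi^{-1}$, i.e. $\delta_\Gamma(x)$ is the signed normal coordinate of $x$; equivalently $\delta_\Gamma(x)=\langle x-P_\Gamma(x),n(P_\Gamma(x))\rangle$. By construction $|\delta_\Gamma(x)|=\|x-P_\Gamma(x)\|=d(x,\Gamma)$ (property (ii) of Proposition \ref{def dist sign}), so only property (i), namely $\delta_\Gamma\in C^2(\mathcal U_\varepsilon(\Gamma))$, remains. The classical way to get this is to show $\Psi$ is in fact a $C^2$ object in the relevant sense: $n$ is only $C^1$ because $\Gamma$ is $C^2$, so $\Psi$ is $C^1$, which naively gives only $\delta_\Gamma\in C^1$. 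The standard fix, which I would follow, is to work with the signed distance directly and use the identity, valid on $\mathcal U_\varepsilon(\Gamma)\setminus\Gamma$, that $\nabla d(\cdot,\Gamma)(x)=\dfrac{x-P_\Gamma(x)}{d(x,\Gamma)}$ together with the fact that $P_\Gamma$ and hence $x\mapsto\nabla d$ are $C^1$ there — this is precisely the $C^2$ conclusion on $\mathcal U_\varepsilon(\Gamma)\setminus\Gamma$, i.e. part (iv) of Lemma \ref{prop dist}, and it is already in hand. The point that needs new work is the behavior across $\Gamma$: one must show that $\nabla\delta_\Gamma(x)=n(P_\Gamma(x))$ on all of $\mathcal U_\varepsilon(\Gamma)$ (including on $\Gamma$, where it equals $n$) and that this vector field is $C^1$ as a function of $x$, because $x\mapsto P_\Gamma(x)$ is $C^1$ on the full tube (again by inverting the $C^1$ diffeomorphism $\Psi$, whose inverse is $C^1$ by the inverse function theorem, the relevant Jacobian being nonsingular on the tube of radius less than the reach) and $n\in C^1(\Gamma)$. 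Since $\delta_\Gamma$ is Lipschitz with $\nabla\delta_\Gamma$ of class $C^1$ on a neighborhood, $\delta_\Gamma\in C^2$.

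Concretely the steps are: (1) from orientability produce the global $C^1$ unit normal $n$; (2) define $\Psi(p,s)=p+sn(p)$ on $\Gamma\times(-\varepsilon,\varepsilon)$, compute $d\Psi_{(p,s)}$ and show it is invertible for $|s|<\varepsilon$ using that the principal curvatures of $\Gamma$ are bounded by $1/\varepsilon$ (a consequence of positive reach $\varepsilon$); (3) use positive reach to upgrade "local diffeo" to "global diffeo onto $\mathcal U_\varepsilon(\Gamma)$", so $\Psi^{-1}=(P_\Gamma,\delta_\Gamma)\in C^1$; (4) set $\delta_\Gamma=s\circ\Psi^{-1}$ and check $|\delta_\Gamma|=d(\cdot,\Gamma)$ and $\nabla\delta_\Gamma=n\circ P_\Gamma$ everywhere on the tube; (5) conclude $\delta_\Gamma\in C^2$ from $n\circ P_\Gamma\in C^1$. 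The main obstacle is step (2)–(3): proving that the positive-reach constant is exactly the injectivity (and regularity) radius of the normal exponential map, i.e. that no focal point of $\Gamma$ occurs within distance $\varepsilon$, and that $\Psi$ maps \emph{onto} the whole $\varepsilon$-tube. This is where the link between the metric notion (positive reach, via the Mises formula (\ref{ceco}) and uniqueness of the projection) and the differential-geometric notion (nonvanishing Jacobian of $\Psi$, $C^1$-invertibility of $P_\Gamma$) must be made rigorous; once $P_\Gamma\in C^1(\mathcal U_\varepsilon(\Gamma))$ is established, the $C^2$ regularity of $\delta_\Gamma$ and the verification of properties (i)–(ii) of Proposition \ref{def dist sign} are routine, so that Proposition \ref{posit} follows.
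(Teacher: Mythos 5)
Your proposal is correct and follows essentially the same route as the paper's proof: both use positive reach to show that the normal segments of length $\varepsilon$ partition $\mathcal U_\varepsilon(\Gamma)$ into a global tubular neighborhood, and both then use the orientation of $\Gamma$ to give the distance a sign. The paper compresses the regularity argument to a citation of Kosinski's tubular neighborhood theorem, whereas you spell out the bootstrap $\nabla\delta_\Gamma = n\circ P_\Gamma \in C^1$ explicitly (one small caution: invoking Lemma \ref{prop dist} part (iv) here risks circularity, since that lemma presupposes the signed distance, but your direct argument via the $C^1$-invertibility of $\Psi$ and $P_\Gamma\in C^1$ stands on its own).
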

Proof in Appendix A.

The proximally smooth sets were introduced in 1959 in the seminal paper \cite{Federer} by Federer, who also proved many of their most relevant properties. A proximally smooth set must be closed, and the class contains the convex sets, as well as those sets which can be defined locally by means of finitely many equations $f(x)=0$ and inequalities $f(x)\le 0$ using real valued continuously differentiable functions $f$ whose gradients are Lipschitz continuous and satisfy a certain independence condition (Thm 4.12, \cite{Federer}).

 \begin{theorem}[Federer] Suppose $f_i, \dots, f_m$ are continuously differentiable real valued functions on an open subset of $\mathbb R^N$, $\nabla f_i, \dots, \nabla f_m$ are Lipschitz continuous for each $0\le k\le m$, and
$$A=\bigcap^k_{i=1}\left\{x\ :f_i(x)=0\right\}\ \cap\ \bigcap^m_{i=k+1} \left\{x\ :f_i(x)\le 0\right\}.$$
If $\forall a\in A$, we take $J=\{i:\ f_i(a)=0\}$, and there do not exist real numbers $t_i$, corresponding to $i\in J$, such that $t_i\neq 0$ for some $i\in J$, $t_i\ge 0$ whenever $i\in J$, $i>k$, and
$$\sum_{i\in J} t_i\nabla f_i(a)=0$$
then $A$ has positive reach.
\end{theorem}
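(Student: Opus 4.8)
The plan is to show that every point of $A$ has a \emph{positive local reach} and then to patch these local radii together using compactness of $A$ (valid in our applications, where $A=\Gamma$ is a leaf-manifold). The engine is a quantitative version of the constraint qualification together with the quadratic Taylor remainder furnished by the Lipschitz gradients. \textbf{Step 1 (uniform constraint qualification).} Fix $a\in A$, set $J=\{i:f_i(a)=0\}$, and call $(\lambda_i)_{i\in J}$ \emph{admissible} if $\lambda_i\ge 0$ for each $i\in J$ with $i>k$. The hypothesis says exactly that the only admissible $\lambda$ with $\sum_{i\in J}\lambda_i\nabla f_i(a)=0$ is $\lambda=0$, i.e.\ the Mangasarian--Fromovitz qualification holds at $a$. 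A compactness argument (suppose not: take $x_n\to a$, admissible $\lambda^{(n)}$ with $\sum_i|\lambda_i^{(n)}|=1$ and $\sum_i\lambda_i^{(n)}\nabla f_i(x_n)\to 0$, then pass to a limit using continuity of the $\nabla f_i$) gives a convex ball $U\ni a$ and $c>0$ with $\bigl|\sum_{i\in J}\lambda_i\nabla f_i(x)\bigr|\ge c\sum_{i\in J}|\lambda_i|$ for all $x\in U$ and all admissible $\lambda$; shrinking $U$ so that $f_i\neq 0$ on $U$ for $i\notin J$, the active set $J(p):=\{i:f_i(p)=0\}$ satisfies $J(p)\subseteq J$ for every $p\in A\cap U$.

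\textbf{Step 2 (normal cone) and Step 3 (supporting-ball estimate).} Since Mangasarian--Fromovitz implies the Abadie qualification, for $p\in A\cap U$ the contingent tangent cone $\mathrm{Tan}(A,p)$ equals the linearized cone $\{w:\nabla f_i(p)\cdot w=0\ (i\le k,\ i\in J(p)),\ \nabla f_i(p)\cdot w\le0\ (i>k,\ i\in J(p))\}$, so by Farkas duality $\mathrm{Nor}(A,p)=\{\sum_{i\in J(p)}\lambda_i\nabla f_i(p):\lambda_i\in\mathbb R\ (i\le k),\ \lambda_i\ge0\ (i>k)\}$; by Step 1 every unit $v\in\mathrm{Nor}(A,p)$ has such a representation with $\sum_i|\lambda_i|\le 1/c$. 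Let $L$ bound the Lipschitz constants of the $\nabla f_i$ on $U$; for $p,q\in A\cap U$ and unit $v=\sum_{i\in J(p)}\lambda_i\nabla f_i(p)$, the $C^{1,1}$ estimate gives $|\nabla f_i(p)\cdot(q-p)|\le\tfrac L2|q-p|^2$ when $i\le k$ (using $f_i(p)=f_i(q)=0$) and $\nabla f_i(p)\cdot(q-p)\le\tfrac L2|q-p|^2$ when $i>k$ (using $f_i(p)=0$, $f_i(q)\le0$, and there $\lambda_i\ge0$), so
\[
(q-p)\cdot v=\sum_{i\in J(p)}\lambda_i\,\nabla f_i(p)\cdot(q-p)\le\Bigl(\sum_i|\lambda_i|\Bigr)\frac L2|q-p|^2\le\frac1{2r}|q-p|^2,\qquad r:=\frac cL.
\]
Equivalently, the open ball $B(p+rv,r)$ misses $A\cap U$ for every $p\in A\cap U$ and every unit normal $v$ at $p$.

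\textbf{Step 4 (uniqueness of projections and globalization).} If $x$ is so close to $a$ that all its nearest points in $A$ lie in $U$ and $|x-p|<r$ for a nearest point $p$, then any two nearest points $p,p'$ are unit-normal base points for $v=(x-p)/|x-p|$ and $v'=(x-p')/|x-p'|$ (by minimality of $|x-\cdot|$ over $A$), and applying the Step 3 inequality at $p$ with $q=p'$ and at $p'$ with $q=p$ and adding gives $|p-p'|^2\le(|x-p|/r)\,|p-p'|^2$, hence $p=p'$. Thus $\mathrm{reach}(A,a)>0$ for each $a\in A$. When $A$ is compact, cover it by finitely many such balls $U$, let $r_0$ be the least of the associated radii, and use a Lebesgue-number argument to find $\varepsilon\in(0,r_0/2)$ such that every $x$ with $d(x,A)<\varepsilon$ has a nearest point $p$ with $B(p,2\varepsilon)$ inside one of the balls; the previous sentence then applies, so $A$ has positive reach. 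The Lipschitz hypothesis is used only for the quadratic remainder in Step 3 — with merely continuous gradients one would get $(q-p)\cdot v=o(|q-p|)$ and no uniform radius. I expect Step 1 (making the constraint qualification uniform with controlled multipliers, and checking $J(p)\subseteq J$) to be the main obstacle, with the identification in Step 2 of the Federer normal cone as the cone of KKT combinations a close second.
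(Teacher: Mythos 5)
The paper does not prove this result: it is quoted verbatim from Federer's 1959 \emph{Curvature measures} paper (Theorem 4.12 there), and is used only as a checkable sufficient condition for positive reach. So there is no paper proof to compare against.

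Judged on its own, your sketch reconstructs what is essentially the standard ``nonsmooth-analysis'' route to Federer's theorem, and I believe it is correct. The key chain is sound: (Step~1) the hypothesis is exactly Mangasarian--Fromovitz at $a$, and the compactness argument (normalizing $\sum_{i\in J}|\lambda_i|=1$, passing to a subsequential limit using continuity of $\nabla f_i$) gives a ball $U\ni a$, a constant $c>0$ with $|\sum_{i\in J}\lambda_i\nabla f_i(x)|\ge c\sum_{i\in J}|\lambda_i|$, and $J(p)\subseteq J$ on $A\cap U$; (Step~2) any proximal normal $v=(x-p)/|x-p|$ lies in the polar of the contingent tangent cone, which under MFCQ $\Rightarrow$ Abadie equals the linearized cone, so Farkas gives $v=\sum_{i\in J(p)}\lambda_i\nabla f_i(p)$ with $\sum|\lambda_i|\le 1/c$; (Step~3) the $C^{1,1}$ Taylor remainder gives $(q-p)\cdot v\le \frac{1}{2r}|q-p|^2$ with $r=c/L$; (Step~4) the two-projection doubling argument gives $p=p'$ whenever $d(x,A)<r$, i.e.\ $\mathrm{reach}(A,a)>0$. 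Two remarks. First, be aware that Federer's Theorem~4.12 actually concludes only $\mathrm{reach}(A,a)>0$ for each $a\in A$, i.e.\ local positive reach; the paper's wording ``$A$ has positive reach'' is stronger and, without compactness or uniform constants, can fail (take $A=\{y=0\}\cup\{y=1/(1+x^2)\}$, defined by $f(x,y)=y\bigl(y-1/(1+x^2)\bigr)=0$: MFCQ holds pointwise but degenerates as $x\to\infty$, and $\mathrm{reach}(A)=0$). Your explicit globalization under compactness of $A$ is exactly the right way to recover the global statement in the regime the paper cares about (compact leaf-manifolds with positive reach). Second, the two implications you outsource --- MFCQ $\Rightarrow$ Abadie, and Farkas identifying the polar of the linearized cone with the KKT cone --- are standard but should be cited explicitly if this were to be written out in full; similarly ``Lebesgue number'' here means the uniform version of the covering lemma for the open cover $\{U_j\}$ of $A$ in $\mathbb R^N$, namely a $\delta>0$ with $B(p,\delta)\subseteq U_{j(p)}$ for all $p\in A$.
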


\subsubsection{Good extension of $d\left(\cdot,\Gamma\right)$}\label{goodex}
As we said at the beginning of the section, the extension of $\delta_{\Gamma
}$ equal to zero used before Corollary \ref{trivial} does not allow us to prove Theorem
\ref{Thm L=L=L}. Thus we extend the distance function, since this extension will have better properties.

\begin{proposition}\label{good}
Let $\Gamma$ be a leaf-manifold. Let $\mathcal U$ be given by Proposition \ref{def dist sign} and let $\mathcal V$ be an open neighborhood of $\Gamma$ such that $\overline{\mathcal V}\subset \mathcal U$. Then there exists a function $\phi:\mathbb R^N\rightarrow \mathbb R$, such that
\begin{description}
  \item[i)] $\phi(x)=d\left(\cdot,\Gamma\right)$ on $\overline{\mathcal{V}}$
  \item[ii)] the process $\phi\left(X_{t}\right)$ is a continuous semimartingale
  \item[iii)] for each compact ball $B\subset\mathbb R^N$ there exists $\varepsilon_1>0$ such that for each $\varepsilon<\varepsilon_1$ we have $\left\{x\in B:\ \phi(x)<\varepsilon\right\}=\left\{x\in B:\ d(x,\Gamma)<\varepsilon\right\}.$
\end{description}
Such a function $\phi$ will be called good extension of $d\left(\cdot,\Gamma\right)$.
\end{proposition}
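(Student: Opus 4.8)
The plan is to build $\phi$ by gluing the genuine distance function near $\Gamma$ to a suitable smooth extension further out, using a cutoff, while being careful that the transition region is placed \emph{outside} a slightly larger neighborhood of $\Gamma$ so that property (iii) survives. First I would fix, via Proposition \ref{def dist sign}, the neighborhood $\mathcal U$ and the signed distance $\delta_\Gamma\in C^2(\mathcal U)$, and recall from Lemma \ref{prop dist}(iv) that $d(\cdot,\Gamma)$ is $C^2$ on $\mathcal U\setminus\Gamma$. Choose nested open neighborhoods $\mathcal V\Subset\mathcal W\Subset\mathcal U$ of $\Gamma$ (so $\overline{\mathcal V}\subset\mathcal W$, $\overline{\mathcal W}\subset\mathcal U$); since $d(\cdot,\Gamma)=|\delta_\Gamma|$ is continuous and vanishes exactly on $\Gamma$, I can shrink things so that $d(\cdot,\Gamma)<\varepsilon_*$ on $\mathcal W$ for some small $\varepsilon_*>0$ and, conversely, $d(\cdot,\Gamma)\ge\varepsilon_*'$ on $\mathbb R^N\setminus\mathcal W$ for some $\varepsilon_*'>0$ on any prescribed compact ball — this quantitative separation is what will drive (iii).

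Next I would construct the extension. On $\mathcal W\setminus\overline{\mathcal V}$ (an open set on which $d(\cdot,\Gamma)$ is $C^2$ and bounded away from $0$) I can pick any $C^2$ function $h:\mathbb R^N\to\mathbb R$ that agrees with $d(\cdot,\Gamma)$ on a neighborhood of $\overline{\mathcal V}$ and is, say, identically a large constant outside $\mathcal U$; concretely, take a $C^\infty$ cutoff $\chi$ equal to $1$ on a neighborhood of $\overline{\mathcal V}$ and supported in $\mathcal W$, extend the $C^2$ function $d(\cdot,\Gamma)\big|_{\mathcal U\setminus\Gamma}$ arbitrarily but $C^2$-ly past $\mathcal U$ (Whitney/Tietze-type $C^2$ extension), call it $d_{\mathrm{ext}}$, and set
\[
\phi(x)=\chi(x)\,d(x,\Gamma)+(1-\chi(x))\,d_{\mathrm{ext}}(x)+\bigl(1-\chi(x)\bigr)\,\rho(x),
\]
where $\rho$ is a nonnegative $C^\infty$ bump, supported away from $\mathcal W$, large enough that $\phi>\varepsilon_*$ on $\mathbb R^N\setminus\mathcal W$ (this is a routine one-line estimate using compactness on each ball). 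Then $\phi\in C^2$ away from $\Gamma$ and $\phi=d(\cdot,\Gamma)$ on $\overline{\mathcal V}$, giving (i). For (ii): $\phi=d(\cdot,\Gamma)=|\delta_\Gamma|$ near $\Gamma$ with $\delta_\Gamma\in C^2$, so $\phi(X_t)=|\delta_\Gamma(X_t)|$ there, and by Itô's formula $\delta_\Gamma(X_t)$ is a continuous semimartingale; the absolute value of a continuous semimartingale is again a continuous semimartingale (Tanaka's formula), and away from $\Gamma$, $\phi$ is globally $C^2$, so patching the two (they overlap on the $C^2$ region $\mathcal W\setminus\Gamma$) shows $\phi(X_t)$ is a continuous semimartingale on $[0,T]$.

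Finally (iii): fix a compact ball $B$. By the separation above, $\{x\in B:\phi(x)<\varepsilon_*\}\subseteq\mathcal W$, and on $\mathcal W$ we have $\phi=\chi\,d(\cdot,\Gamma)+(1-\chi)(d_{\mathrm{ext}}+\rho)$; shrinking further, on the subneighborhood where $\chi\equiv1$ we have $\phi\equiv d(\cdot,\Gamma)$ exactly, and that subneighborhood contains $\{x\in B:d(x,\Gamma)<\varepsilon_1\}$ for $\varepsilon_1$ small, by continuity and compactness. For such $\varepsilon<\varepsilon_1$, both sides $\{\phi<\varepsilon\}\cap B$ and $\{d(\cdot,\Gamma)<\varepsilon\}\cap B$ lie inside this region where the two functions coincide, hence are equal. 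The main obstacle, and the only delicate point, is the placement of the gluing: I must guarantee the cutoff transition and the bump $\rho$ live strictly outside a neighborhood of $\Gamma$ that is uniform enough on each compact ball, so that the ``$\varepsilon_1$ depending on $B$'' in (iii) genuinely works — this is exactly where compactness of $B$ and the quantitative bound $d(\cdot,\Gamma)\ge\varepsilon_*'$ on $B\setminus\mathcal W$ are used, and it is the reason we extend the \emph{distance} and not the \emph{signed} distance (the latter would change sign and the level sets $\{\phi<\varepsilon\}$ would pick up the far region, breaking (iii)).
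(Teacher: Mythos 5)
Your construction is essentially the paper's: glue $d(\cdot,\Gamma)$ near $\Gamma$ to a $C^2$ extension further out via a cutoff, and place the transition strictly inside $\mathcal U$ so that, on each compact ball, the small sublevel sets of $\phi$ are untouched by the gluing. The extra bump $\rho$ is harmless (the paper instead requires the regularization $\tilde d$ of $d(\cdot,\Gamma)$ to stay quantitatively close to the true distance, which serves the same purpose for (iii)).

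The real gap is in your justification of (ii). ``Near $\Gamma$ it's $|\delta_\Gamma|$, hence a semimartingale by Tanaka; away from $\Gamma$ it's $C^2$, hence a semimartingale by It\^o; patch'' is not a valid argument: the semimartingale property is a global-in-time property, and the random set of times at which $X_t$ sits in one region or the other is not an interval and cannot be handled by a simple stopping-time localization, since $X$ may cross $\Gamma$ infinitely often. You need a single, global decomposition. The paper gets one by observing that the cutoff term is literally the absolute value of a globally $C^2$ function: with $\theta$ the cutoff, $\theta(x)\,d(x,\Gamma)=|\theta(x)\,\delta_\Gamma(x)|=|f(x)|$, where $f:=\theta\,\delta_\Gamma$ extended by $0$ outside $\mathcal U$ is $C^2(\mathbb R^N)$. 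Thus $\phi=d_1+|f|$ with $d_1=(1-\theta)\tilde d\in C^2(\mathbb R^N)$, and It\^o applied to $d_1$ together with It\^o--Tanaka applied to $f$ give (ii) in one step, no patching. Your own $\phi$ admits exactly this form --- $\phi=(1-\chi)(d_{\mathrm{ext}}+\rho)+|\chi\,\delta_\Gamma|$ with $\chi\,\delta_\Gamma$ globally $C^2$ since $\chi$ is supported in $\mathcal W\Subset\mathcal U$ --- so the fix is to write this decomposition explicitly and drop the patching claim. As a minor point, the phrase ``extend $d(\cdot,\Gamma)|_{\mathcal U\setminus\Gamma}$ past $\mathcal U$ by Whitney'' should be applied to $d(\cdot,\Gamma)|_{\mathcal U\setminus\overline{\mathcal V}}$ or similar, since $d(\cdot,\Gamma)$ is not $C^2$ up to $\Gamma$; the behaviour near $\Gamma$ is irrelevant because $(1-\chi)$ vanishes there, but the extension step should be stated on a set where the function really is $C^2$ up to the boundary.
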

Proof in Appendix B.

\begin{lemma}\label{newlemma}
Let $X$ be a continuous semimartingale in $\mathbb{R}^{N}$, $\phi$ a good extension of $d(\cdot,\Gamma)$ and $\mathcal{V}$ an open neighborhood of $\Gamma$ satisfying Proposition \ref{good}. Then one has:
\begin{description}
\item[iv)] for each $t$ such that $X_t\in \mathcal{V}$ we have that
$$d\langle \phi(X)\rangle_t=\sum_{i,j=1}^{N}\partial_{i}\delta_\Gamma\left(  X_{t}\right)  \partial_{j}%
\delta_\Gamma\left(  X_{t}\right)  d\left\langle X^{i},X^{j}\right\rangle _{t}.$$
Moreover if $X$ is a Brownian semimartingale, $d\langle \phi(X)\rangle_t=dt$.
\item[v)] If $X$ is a Brownian semimartingale, then $\phi$ controls $X$ in quadratic variation on $\mathcal{V}$.
\end{description}
\end{lemma}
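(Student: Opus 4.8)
The plan is to prove parts (iv) and (v) of Lemma \ref{newlemma} by reducing everything to the local situation where $\phi$ coincides with the signed distance $\delta_\Gamma$, which is $C^2$, and then invoking It\^o's formula together with the eikonal equation $|\nabla\delta_\Gamma|=1$ from Lemma \ref{prop dist} (vi).

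For part (iv), the first step is to observe that by Proposition \ref{good}(i) we have $\phi = d(\cdot,\Gamma)$ on $\overline{\mathcal V}$, and by Lemma \ref{prop dist}(iii) the function $d(\cdot,\Gamma)$ agrees on each connected component of $\mathcal V\setminus\Gamma$ with $\pm\delta_\Gamma$; since $\delta_\Gamma$ is $C^2$ on $\mathcal U\supset\overline{\mathcal V}$, the composition $\delta_\Gamma(X_t)$ is a continuous semimartingale with, by It\^o's formula, $d\langle\delta_\Gamma(X)\rangle_t = \sum_{i,j}\partial_i\delta_\Gamma(X_t)\,\partial_j\delta_\Gamma(X_t)\,d\langle X^i,X^j\rangle_t$. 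The quadratic variation is insensitive to the sign ambiguity (it involves products of two first derivatives, so the signs cancel) and to the behavior of $\phi$ outside $\mathcal V$, because the bracket of a continuous semimartingale is a local functional: on the (random, open) time set $\{t : X_t\in\mathcal V\}$ the increments of $\langle\phi(X)\rangle$ and of $\langle\delta_\Gamma(X)\rangle$ coincide. This can be made rigorous by a standard localization argument using a sequence of stopping times exhausting $\{t:X_t\in\mathcal V\}$, or equivalently by noting $d\langle\phi(X)\rangle_t = 1_{X_t\in\mathcal V}\,d\langle\delta_\Gamma(X)\rangle_t + 1_{X_t\notin\mathcal V}\,d\langle\phi(X)\rangle_t$ and reading off the value on $\{X_t\in\mathcal V\}$. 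For the Brownian-semimartingale case $\langle X^i,X^j\rangle_t=\delta_{ij}t$, so the sum collapses to $\sum_i(\partial_i\delta_\Gamma(X_t))^2\,dt = |\nabla\delta_\Gamma(X_t)|^2\,dt = dt$ by the eikonal equation (vi), valid since $X_t\in\mathcal V\subset\mathcal U$.

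For part (v), the goal is to verify that $\phi(X)$ controls $X$ in quadratic variation on $\mathcal V$ in the sense of Definition \ref{def controlled}. The clean route is to apply Proposition \ref{Prop suff cond for control}: I would check that $\langle\phi(X)\rangle$ is non-degenerate on $\mathcal V$ in the sense of Definition \ref{definition non-degenerate}. Taking $D_\phi = \mathcal V\setminus\Gamma$ (a set of full measure in $\mathcal V$ on which $\phi=d(\cdot,\Gamma)$ is $C^2$ by Lemma \ref{prop dist}(iv)), and noting that for a Brownian semimartingale $X_t\in\Gamma$ for only a Lebesgue-null set of times $t$ — this follows from Lemma \ref{corollary measure zero on Gamma} applied with $E=\{0\}$, exactly as remarked after that lemma — the hypothesis "$X_t\in A \Rightarrow X_t\in D_\phi$ for a.e. $t$" holds. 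Then, on $\{t:X_t\in D_\phi\}$, the density of $\langle\phi(X)\rangle$ computed in part (iv) equals $|\nabla d(X_t,\Gamma)|^2 = |\nabla\delta_\Gamma(X_t)|^2 = 1 > 0$ by (vi), so the infimum in Definition \ref{definition non-degenerate} is $1>0$ almost surely. Proposition \ref{Prop suff cond for control} then yields the control property; alternatively, in the Brownian case one can argue directly since $d\langle\phi(X)\rangle_t = dt$ on $\mathcal V$ and each component of $\eta_X$ is of the form $c\,dt$ with $c$ bounded, so the required inequality with a deterministic constant $C_{\mathcal V}$ is immediate.

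The main obstacle, and the only point requiring genuine care, is the localization/gluing argument in part (iv): justifying that the quadratic-variation density of $\phi(X)$ on the time set where $X$ visits $\mathcal V$ depends only on $\phi|_{\mathcal V}$ (hence equals the $\delta_\Gamma$ formula), despite $\phi$ being merely a good extension with unspecified, possibly non-smooth, behavior outside $\mathcal U$. One must also handle the fact that $\phi$ restricted to $\mathcal V\setminus\Gamma$ is $C^2$ but on $\mathcal V$ it is only known to make $\phi(X)$ a semimartingale (Proposition \ref{good}(ii)), so the It\^o computation is legitimately performed for $\delta_\Gamma$ and transported to $\phi$ via equality of brackets on $\{X_t\in\mathcal V\}$; the sign ambiguity between components of $\mathcal V\setminus\Gamma$ is harmless precisely because only $\partial_i\delta_\Gamma\,\partial_j\delta_\Gamma$ enters. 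Everything else is a direct substitution using the eikonal equation and the results already established earlier in the paper.
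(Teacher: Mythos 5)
Your part (iv) follows the same route as the paper: the paper takes an arbitrary $C^{2}(\mathbb{R}^{N})$ extension $\psi$ of $\delta_{\Gamma}$ from $\overline{\mathcal{V}}$, notes that $\phi(X_t)=|\psi(X_t)|$ when $X_t\in\mathcal{V}$, and applies the It\^o--Tanaka formula to $|\psi(X)|$ so that $d\langle\phi(X)\rangle_t=d\langle|\psi(X)|\rangle_t=\sum_{i,j}\partial_i\psi(X_t)\partial_j\psi(X_t)\,d\langle X^i,X^j\rangle_t$ for such $t$. Your phrasing ``the signs cancel'' is an informal shorthand for the fact that $\langle|\psi(X)|\rangle=\langle\psi(X)\rangle$, and the locality-of-brackets step you describe is exactly what the paper implicitly uses; you should, however, first replace $\delta_{\Gamma}$ by a global $C^2$ extension, since $\delta_{\Gamma}$ is defined only on $\mathcal{U}$ and $\delta_{\Gamma}(X_t)$ is not a well-defined process.

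In part (v) your primary argument has a genuine circularity. You invoke Lemma \ref{corollary measure zero on Gamma} (with $E=\{0\}$) to show that the time set $\{t:X_t\in\Gamma\}$ is Lebesgue-null, but that lemma is stated ``under the assumptions of Theorem \ref{Thm occupation time formula localized},'' and those assumptions include precisely that $\phi(X)$ controls $X$ in quadratic variation on $A$ --- which is the conclusion of part (v) you are trying to establish. You cannot use the multidimensional occupation time formula for $(X,\phi,\mathcal{V})$ to verify one of its own hypotheses. The paper avoids this by arguing directly: if on a non-negligible event the set $\Theta=\{t:\phi(X_t)=0\}$ had positive Lebesgue measure, the one-dimensional occupation time formula for the semimartingale $\phi(X)$ (which requires only that $\phi(X)$ be a continuous semimartingale, Proposition \ref{good}(ii)) gives $\int_{\Theta}d\langle\phi(X)\rangle_s\le\int_{\mathbb{R}}1_{\{0\}}(a)L_T^a(\phi(X))\,da=0$, while part (iv) yields $\int_{\Theta}d\langle\phi(X)\rangle_s=\int_{\Theta}ds>0$, a contradiction.

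Your final ``alternatively'' sentence does give a valid and in fact simpler proof of (v): by part (iv), $1_{X_t\in\mathcal{V}}\,d\langle\phi(X)\rangle_t=1_{X_t\in\mathcal{V}}\,dt$, and since $X$ is a Brownian semimartingale $\eta_X(dt)$ is a deterministic constant multiple of $dt$, so condition (ii) of Definition \ref{def controlled} holds with a deterministic constant; condition (i) is Proposition \ref{good}(ii). This bypasses Proposition \ref{Prop suff cond for control} and the null-time-on-$\Gamma$ argument altogether. I would recommend promoting that alternative to the main argument and dropping the appeal to Lemma \ref{corollary measure zero on Gamma}.
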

Proof in Appendix B.

\subsection{The density $\mathcal L_{A,\phi}^a$ when $X$ is a Brownian semimartingale and $\phi=\delta_\Gamma$\label{Sect distance with sign}}
Here we will restrict ourselves to Brownian semimartingales, since they
mix well with the properties of $d(\cdot, \Gamma)$. Thus $\mathcal{L}_{i,A,\phi}%
^{a}$ is independent of $i$ and will be denoted by $\mathcal{L}_{A,\phi}^{a}$. With this choice we may identify $\mathcal{L}_{i,A,\phi}^{a}$
as the the local time of the 1D semimartingale $\phi\left(  X\right)  $ and
deduce the existence of a c\`{a}dl\`{a}g version. Denote by $L_{T}^{a}\left(  Y\right)$ the c\`{a}dl\`{a}g modification of the local time at $a$ on $\left[
0,T\right]  $ of a continuous semimartingale $Y$ (see \cite{RevuzYor}, Theorem 1.7, Chapter VI).

\begin{definition}
Let $Y$ be a continuous semimartingale in $\mathbb R$ and $a\in \mathbb R$, then we define the symmetrical local time of $Y$ in $a$ as
$$\tilde L^a_T(Y):=\lim_{\varepsilon\rightarrow0}\frac{1}{2\varepsilon}\int_{0}^{T}%
1_{(a-\varepsilon,a+\varepsilon)}(Y_t)d\langle Y\rangle_t.$$
\end{definition}

For each fixed $\omega \in\Omega$, it coincides with $L^a_T(Y)(\omega)$ except if $a$ is a point of discontinuity for it, and in that case
$$\tilde L^a_T(Y)(\omega)=\frac{L^a_T(Y)(\omega)+ L^{a-}_T(Y)(\omega)}{2}$$
where $L^{a-}_T(Y)(\omega)$ is the left limit of the local time in $a$ (see \cite{RevuzYor}, Chapter VI). In general $\tilde L^a_T(Y)$ is a modification of $L^a_T(Y)$, and whenever the local time is continuous they coincide.

\begin{theorem1}
\label{Thm L=L=L}Let $X$ be a Brownian semimartingale in $\mathbb{R}^{N}$. Let $\Gamma$ be a leaf-manifold and $\mathcal U$ be an open neighborhood of $\Gamma$ as in Proposition \ref{def dist sign}. Let $\phi:\mathbb{R}^{N}\rightarrow\mathbb{R}$ be a good
extension of $d(\cdot,\Gamma)$ and $\mathcal{V}\subset \mathcal U$ be an open neighborhood of $\Gamma$ satisfying Proposition \ref{good}. Then the assumptions of Theorem
\ref{Thm occupation time formula localized} are satisfied with $A=\mathcal{V}$.  Moreover for each fixed $\omega\in\Omega$ there exists $\varepsilon_1(\omega)>0$ such that%
\begin{equation}
\mathcal{L}_{A,\phi}^{a}=\tilde L_{T}^{a}\left(  \phi\left(  X\right)  \right)
\label{main claim}%
\end{equation}
for a.e. $a<\varepsilon_1(\omega)$, and they are both null if $a<0$.

In particular, on the random interval $(-\infty,\varepsilon_1]$ the process $(\omega,a)\mapsto \mathcal L^a_{A,\phi}(\omega)$ is the modification of a c\`{a}dl\`{a}g process.
\end{theorem1}

\begin{proof}
Using Lemma \ref{newlemma}, part (v), we have that the assumptions of Theorem \ref{Thm occupation time formula localized}
are satisfied. Let us prove (\ref{main claim}). Using part (iv) of the same Corollary we have%
$$d\left\langle \phi\left(  X\right)  \right\rangle _{t}=dt$$
because $X$ is a Brownian semimartingale, and $A=\mathcal{V}$. Hence for a.e. $a$ (we use the formula for $\mathcal{L}_{A,\phi}^{a}$ given by
Theorem \ref{Thm occupation time formula localized})
\begin{align*}
\mathcal{L}_{A,\phi}^{a}  &  =\lim_{\varepsilon\rightarrow0}\frac{1}{2\varepsilon
}\int_{0}^{T}1_{X_{t}\in A}1_{(a-\varepsilon,a+\varepsilon)}(\phi(X_{t}))dt\\
&  =\lim_{\varepsilon\rightarrow0}\frac{1}{2\varepsilon}\int_{0}^{T}%
1_{X_{t}\in A}1_{(a-\varepsilon,a+\varepsilon)}(\phi(X_{t}))d\left\langle
\phi\left(  X\right)  \right\rangle _{t}.
\end{align*}
For each fixed $\omega\in\Omega$ the trajectory of $X_t$, $t\in[0,T]$ remains inside a compact ball $B(\omega)$; then we can use Proposition \ref{good}, part (iii) and obtain that there exists $\varepsilon_1(\omega)>0$ such that for each $\varepsilon<\varepsilon_1(\omega)$ we have $\left\{x\in B:\ \phi(x)<\varepsilon_1\right\}=\left\{x\in B:\ d(x,\Gamma)<\varepsilon_1\right\}$. In particular if $a<\varepsilon_1(\omega)$ and $\varepsilon<|a-\varepsilon_1(\omega)|$ then
\begin{align}\label{identityA}
1_{X_{t}\in A}1_{(a-\varepsilon,a+\varepsilon)}(\phi(X_{t}))=1_{(a-\varepsilon
,a+\varepsilon)}(\phi(X_{t}))
\end{align}
hence, for a.e. $a<\varepsilon_1(\omega)$,
\begin{align*}
\mathcal{L}_{A,\phi}^{a}  &  =\lim_{\varepsilon\rightarrow0}\frac{1}{2\varepsilon
}\int_{0}^{T}1_{(a-\varepsilon,a+\varepsilon)}(\phi(X_{t}))d\left\langle
\phi\left(  X\right)  \right\rangle _{t}=\tilde L_{T}^{a}\left(  \phi\left(  X\right)  \right)  ,
\end{align*}
that is a modification of the c\`{a}dl\`{a}g process $L_{T}^{a}\left(  \phi\left(  X\right)  \right)$.
The proof is complete.
\end{proof}

\begin{corollary}
Under the hypotheses of the previous theorem, if there exists $\varepsilon>0$ such that $\mathcal U=\mathcal U_\varepsilon(\Gamma)$, we can take $\varepsilon_1(\omega)=\varepsilon_2<\frac{\varepsilon}{2}$ for each $\omega\in\Omega$. Hence we have that $\mathcal{L}_{A,\phi}^{a}$ is the modification of a c\`{a}dl\`{a}g process for a.e$.$ $a<\varepsilon_2$.
\end{corollary}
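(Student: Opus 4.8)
The statement to be proven is a corollary of Theorem \ref{Thm L=L=L}: under the extra hypothesis that the neighborhood $\mathcal U$ given by Proposition \ref{def dist sign} is of the uniform form $\mathcal U_\varepsilon(\Gamma)$, the $\omega$-dependent threshold $\varepsilon_1(\omega)$ can be replaced by a single deterministic constant $\varepsilon_2<\varepsilon/2$, so that $(\omega,a)\mapsto\mathcal L^a_{A,\phi}(\omega)$ is a càdlàg modification on the fixed interval $a<\varepsilon_2$ rather than only on the random interval $(-\infty,\varepsilon_1(\omega)]$. The plan is to trace through the proof of Theorem \ref{Thm L=L=L} and observe that the only place where the ball $B(\omega)$ (and hence the randomness of $\varepsilon_1$) enters is in the application of Proposition \ref{good}, part (iii); when $\mathcal U=\mathcal U_\varepsilon(\Gamma)$ this part holds with a $B$-independent constant.

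First I would recall from the proof of Theorem \ref{Thm L=L=L} that for a.e.\ $a$ and every $\omega$,
\[
\mathcal L_{A,\phi}^a=\lim_{\varepsilon\to0}\frac1{2\varepsilon}\int_0^T 1_{X_t\in A}1_{(a-\varepsilon,a+\varepsilon)}(\phi(X_t))\,d\langle\phi(X)\rangle_t,
\]
and that the identification with $\tilde L_T^a(\phi(X))$ rests on identity (\ref{identityA}), namely $1_{X_t\in A}1_{(a-\varepsilon,a+\varepsilon)}(\phi(X_t))=1_{(a-\varepsilon,a+\varepsilon)}(\phi(X_t))$, which in turn follows from $\{x\in B:\phi(x)<\varepsilon_1\}=\{x\in B:d(x,\Gamma)<\varepsilon_1\}$. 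Now I would argue that when $\mathcal U=\mathcal U_\varepsilon(\Gamma)$ the conclusion of Proposition \ref{good}(iii) is in fact uniform in $B$: indeed $\phi=d(\cdot,\Gamma)$ on $\overline{\mathcal V}$, and for any $\varepsilon_2<\varepsilon/2$ (so that $\mathcal U_{\varepsilon_2}(\Gamma)\subset\mathcal V$, using the Remark that allows $\mathcal V=\mathcal U_{\varepsilon_1}(\Gamma)$ with $\varepsilon_1<\varepsilon$) one has the set equality
\[
\{x\in\mathbb R^N:\phi(x)<\varepsilon_2\}\supseteq\{x:d(x,\Gamma)<\varepsilon_2\}=\mathcal U_{\varepsilon_2}(\Gamma),
\]
and conversely, since the good extension $\phi$ can be (and is, by the construction before Corollary \ref{trivial} together with Proposition \ref{good}) taken so that $\phi\ge\varepsilon_2$ off $\mathcal U_{\varepsilon_2}(\Gamma)$, these two sets coincide with no reference to any compact ball. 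Thus I can take $\varepsilon_1(\omega)\equiv\varepsilon_2$ for every $\omega$.

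With this uniform version in hand, I would simply repeat the final chain of equalities in the proof of Theorem \ref{Thm L=L=L}: for a.e.\ $a<\varepsilon_2$ and every $\omega$, for $\varepsilon<|a-\varepsilon_2|$ the indicator $1_{X_t\in A}$ may be dropped, so $\mathcal L^a_{A,\phi}=\tilde L^a_T(\phi(X))$, and the latter is a modification of the càdlàg process $L^a_T(\phi(X))$ of Revuz–Yor; since the exceptional null set of $a$ does not depend on $\omega$ and the threshold $\varepsilon_2$ is deterministic, $(\omega,a)\mapsto\mathcal L^a_{A,\phi}(\omega)$ is a modification of a càdlàg process on the fixed interval $a<\varepsilon_2$, which is the assertion. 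The only mildly delicate point — and the one I would be careful to justify rather than wave at — is the claim that the good extension $\phi$ can be arranged so that $\{\phi<\varepsilon_2\}=\mathcal U_{\varepsilon_2}(\Gamma)$ globally (not merely on each ball): this is exactly where the uniformity of $\mathcal U=\mathcal U_\varepsilon(\Gamma)$ is used, and it amounts to checking that the construction in Proposition \ref{good} pushes the values of $\phi$ up to at least $\varepsilon_2$ outside $\mathcal V\supset\mathcal U_{\varepsilon_2}(\Gamma)$, which holds by choosing the cutoff radii in that construction appropriately.
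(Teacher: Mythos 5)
Your proof is correct and takes essentially the same approach as the paper's, which consists of a one-line instruction: repeat the proof of Proposition \ref{good}(iii) with $B=\mathbb{R}^N$ and then rerun the proof of Theorem \ref{Thm L=L=L}. You have simply unpacked that instruction, correctly locating the only place where the compact ball $B(\omega)$ (and hence the $\omega$-dependence of $\varepsilon_1$) enters, and observing that the assumption $\mathcal U=\mathcal U_\varepsilon(\Gamma)$ renders Proposition \ref{good}(iii) uniform so that a deterministic $\varepsilon_2$ suffices.
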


\begin{proof}
Repeat the proof of Proposition \ref{good}, part (iii), taking $B=\mathbb R^N$ and then the proof of the previous theorem.
\end{proof}

\subsection{Manifolds with singularities}
Until now in this section we have solved (in two ways, namely with the signed
distance and the distance function) the problem of the construction of a
suitable function $\phi$, given a $C^{2}$ manifold $\Gamma$ (with suitable
additional properties). To show that, potentially, the theory developed in
this paper may adapt to manifolds with singularities, we give here two
examples of construction of $\phi$ when the set $\Gamma$ is less regular:
first a manifold with some Lipschitz point, then the transversal union of
smooth manifolds. Writing general statements in such cases
turns out to be particularly annoying; thus we limit ourselves to show some
particular examples, that the reader will easily conceptualize.

\begin{example}
Let $D\subset\mathbb{R}^{2}$ be the square
$$D=\left[  0,1\right]^{2}=\left\{  x=\left(  x_{1},x_{2}\right)  :x_{1}\in\left[  0,1\right],x_{2}\in\left[  0,1\right]  \right\}.$$
Let $\Gamma=\partial D$. It is a
piecewise smooth manifold. The function $\delta_{\Gamma}$ defined in Example \ref{Example boundary} is
smooth except on the following sets:%
\begin{align*}
& \left\{  x_{1}\in\left[  0,1\right]  ,x_{2}=x_{1}\right\},\quad \left\{  x_{1}\in\left[  0,1\right]  ,x_{2}=1-x_{1}\right\}
\end{align*}
where it is continuous with side derivatives.
We do not have the properties of Proposition \ref{def dist sign} but still we may check directly the properties of Definition
\ref{definition non-degenerate}, for instance in the case when $X$ is a Brownian motion. Indeed we have that
$$|x_1|<|x_2| \Leftrightarrow (x_1^+<x_2^+) \vee ((-x_1)^+<(-x_2)^+).$$
Then we consider
$$x_1^+-\left(x_1^+-x_2^+\right)^+=\left\{
                                     \begin{array}{ll}
                                       x_1^+, & \hbox{if } x_1^+<x_2^+ \\
                                       x_2^+, & \hbox{if } x_1^+\ge x_2^+
                                     \end{array}
                                   \right.$$
and
$$(-x_1)^+-\left((-x_1)^+-(-x_2)^+\right)^+=\left\{
                                     \begin{array}{ll}
                                       (-x_1)^+, & \hbox{if } (-x_1)^+<(-x_2)^+ \\
                                       (-x_2)^+, & \hbox{if } (-x_1)^+\ge (-x_2)^+
                                     \end{array}
                                   \right.$$
So we obtain
$$\phi(x)=x_1^+-\left(x_1^+-x_2^+\right)^+-\left((-x_1)^+-\left((-x_1)^+-(-x_2)^+\right)^+\right).$$
In particular we can use It\^o-Tanaka theorem on the single functions that compose $\phi$: the process $\phi(X)$ is a semimartingale with quadratic variation equal a.s$.$ to
$$\sum_{ij}\partial_i\phi(X_t)\partial_j\phi(X_t)d\langle X^i,X^j\rangle_t=dt.$$
Thus we could apply Proposition \ref{Prop suff cond for control} and then Theorem \ref{Thm occupation time formula localized}.
\end{example}

\begin{example}
Let $\Gamma\subset\mathbb{R}^{2}$ be the union of the two lines:
\[
\Gamma=\left\{  x:x_{2}=x_{1}\right\}  \cup\left\{  x:x_{2}=-x_{1}\right\}  .
\]
We introduce the sets%
\begin{align*}
D_{1}=\left\{  x:x_{1}>0,-x_{1}<x_{2}<x_{1}\right\},\quad D_{2}=\left\{  x:x_{2}>0,-x_{2}<x_{1}<x_{2}\right\}
\end{align*}
$D_{3}=-D_{1}$, $D_{4}=-D_{2}$. We set%
\[
\phi\left(  x\right)  =\left\{
\begin{array}
[c]{ccc}%
d\left(  x,\Gamma\right)   & \text{if} & x\in D_{1}\cup D_{3}\\
-d\left(  x,\Gamma\right)   & \text{if} & x\in D_{2}\cup D_{4}%
\end{array}
\right.
\]
and $\phi\left(  x\right)  =0$ on $\Gamma$. It preserves some properties of
the signed distance function. It is Lipschitz continuous everywhere, but it is
not differentiable on the axes%
\[
\left\{  x:x_{1}=0\right\}  \cup\left\{  x:x_{2}=0\right\}  .
\]
If $X$ is a Brownian motion, Definition \ref{definition non-degenerate} applies and thus Proposition \ref{Prop suff cond for control} and Theorem \ref{Thm occupation time formula localized} hold. Indeed we consider that
$$x_1^+-\left(x_1^+-x_2^+\right)^+=\left\{
                                     \begin{array}{ll}
                                       x_1^+, & \hbox{if } x_1^+<x_2^+ \\
                                       x_2^+, & \hbox{if } x_1^+\ge x_2^+
                                     \end{array}
                                   \right.$$
Then we define
$$\psi(x)=-\left|\left((-x_1)^+,(-x_2)^+\right)\right|$$
and so
$$\phi(x)=x_1^+-\left(x_1^+-x_2^+\right)^++\psi(x).$$
We can apply again the It\^o-Tanaka theorem to all the single functions (remind that the modulus is convex) and get the same result
as in the previous example.
\end{example}

\section{Local times with respect to a codimension-1 manifold\label{2}}
In this section we shall introduce the notion of local time at an $(N-1)$-dimensional
manifold $\Gamma$, on $\left[0,T\right]$, of a continuous semimartingale
$X$ in $\mathbb{R}^{N}$;\ it will be denoted by $L_{T}^{\Gamma}\left(
X\right)  $. Then we collect here its relation with $\mathcal{L}_{A,\phi}^{a}$ and, in the special case that $\Gamma$ is globally a graph, with the notion used in \cite{Peskir}.

We assume that $X$ is a continuous semimartingale in
$\mathbb{R}^{N}$, defined on a filtered probability space $\left(\Omega,\mathcal{A},\mathcal{F}_{t},P\right)$.

\begin{theorem}
\label{Thm local time on manifolds} Let $\Gamma$ be a leaf-manifold. Let $\mathcal U$ be an open neighborhood of $\Gamma$ satisfying Proposition \ref{def dist sign}. Then the limit%
\[
L_{T}^{\Gamma}\left(  X\right)  :=\lim_{\varepsilon\rightarrow0}\frac
{1}{2\varepsilon}\sum_{i,j=1}^{N}\int_{0}^{T}1_{[0,\varepsilon)}\left(  d\left(  X_{t}%
,\Gamma\right)  \right)  \partial_{i}\delta_\Gamma\left(
X_{t}\right)  \partial_{j}\delta_\Gamma\left(  X_{t}\right)  d\left\langle
X^{i},X^{j}\right\rangle _{t}
\]
is well defined and a.s$.$ exists. It will be called geometric local time of $X$ at $\Gamma$ on $\left[  0,T\right]  $. If $X$ is a Brownian semimartingale, then%
\[
L_{T}^{\Gamma}\left(  X\right)  =\lim_{\varepsilon\rightarrow0}\frac
{1}{2\varepsilon}\int_{0}^{T}1_{[0,\varepsilon)}\left(  d\left(  X_{t}%
,\Gamma\right)  \right)  dt.
\]
\end{theorem}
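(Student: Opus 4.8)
The plan is to reduce the multidimensional statement to a one-dimensional occupation time formula applied to the semimartingale $\phi(X)$, where $\phi$ is a good extension of $d(\cdot,\Gamma)$ provided by Proposition \ref{good} (with $\mathcal V$ as in that proposition and $\mathcal U$ as in Proposition \ref{def dist sign}). First I would observe that, by Lemma \ref{newlemma} part (iv), for each $t$ with $X_t\in\mathcal V$ we have
\[
d\langle\phi(X)\rangle_t=\sum_{i,j=1}^N\partial_i\delta_\Gamma(X_t)\partial_j\delta_\Gamma(X_t)\,d\langle X^i,X^j\rangle_t,
\]
so the sum appearing inside the limit in the statement is, on the event $\{X_t\in\mathcal V\}$, exactly $d\langle\phi(X)\rangle_t$. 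Since $d(X_t,\Gamma)<\varepsilon$ forces $X_t\in\mathcal U_\varepsilon(\Gamma)\subseteq\mathcal V$ for $\varepsilon$ small enough (using $\overline{\mathcal V}\subset\mathcal U$ and, for the uniform part, that $\mathcal V$ contains a tube around $\Gamma$), the indicator $1_{[0,\varepsilon)}(d(X_t,\Gamma))$ already localizes us inside $\mathcal V$. Moreover, by Proposition \ref{good} part (iii), for each fixed $\omega$ the trajectory stays in a compact ball $B(\omega)$ on which $\{\phi<\varepsilon\}=\{d(\cdot,\Gamma)<\varepsilon\}$ for all $\varepsilon$ below some $\varepsilon_1(\omega)$; hence on $B(\omega)$, for such $\varepsilon$, $1_{[0,\varepsilon)}(d(X_t,\Gamma))=1_{[0,\varepsilon)}(\phi(X_t))$ since $\phi\ge 0$ on $\overline{\mathcal V}$ and is continuous.

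Putting these two reductions together, for $\varepsilon<\varepsilon_1(\omega)$ the quantity inside the limit equals
\[
\frac{1}{2\varepsilon}\int_0^T 1_{[0,\varepsilon)}(\phi(X_t))\,d\langle\phi(X)\rangle_t
=\frac{1}{2\varepsilon}\int_0^T 1_{(-\varepsilon,\varepsilon)}(\phi(X_t))\,d\langle\phi(X)\rangle_t,
\]
the last equality because $\phi\ge 0$ along the trajectory inside $\mathcal V$ (so the negative part of the interval contributes nothing), again using $\phi(X_t)\ge 0$ whenever $X_t\in\overline{\mathcal V}$ and that the integrand vanishes when $X_t\notin\mathcal V$. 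By the one-dimensional symmetric occupation time formula (the definition of $\tilde L^0_T$ given just before Theorem \ref{Thm L=L=L}, which exists a.s. for the continuous semimartingale $\phi(X)$ by Revuz--Yor), this limit exists a.s. and equals $\tilde L^0_T(\phi(X))$. This proves that $L_T^\Gamma(X)$ is well defined. For the Brownian case, Lemma \ref{newlemma} part (iv) additionally gives $d\langle\phi(X)\rangle_t=dt$, so the same chain of equalities yields
\[
L_T^\Gamma(X)=\lim_{\varepsilon\to 0}\frac{1}{2\varepsilon}\int_0^T 1_{[0,\varepsilon)}(d(X_t,\Gamma))\,dt,
\]
which is the second assertion.

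The main obstacle I anticipate is the careful bookkeeping of the three facts ``$1_{[0,\varepsilon)}(d(X_t,\Gamma))$ already restricts to $\mathcal V$'', ``$\phi$ and $d(\cdot,\Gamma)$ agree on the relevant sublevel sets along the trajectory'', and ``$\phi\ge 0$ there, so symmetrizing the interval costs nothing'' — all of which must hold simultaneously for $\varepsilon$ below a single $\varepsilon_1(\omega)$ that depends only on the (compact) trajectory, not on $\varepsilon$. One must check that the $\varepsilon_1(\omega)$ from Proposition \ref{good} part (iii) can be chosen small enough that $\{d(\cdot,\Gamma)<\varepsilon\}\cap B(\omega)\subset\mathcal V$ as well; this follows since $\Gamma\subset\mathcal V$ is open and $B(\omega)\cap\overline{\mathcal V}^{\,c}$ is at positive distance from $\Gamma$ when $\Gamma$ is compact (a leaf-manifold that is closed and without boundary is compact), so shrinking $\varepsilon_1(\omega)$ further handles it. A minor additional point is to justify exchanging the limit defining $L_T^\Gamma(X)$ with the almost-sure statement: since the reduction is pathwise (for each $\omega$ we pass to $\varepsilon<\varepsilon_1(\omega)$), no uniformity in $\omega$ is needed, and the a.s. existence of $\tilde L^0_T(\phi(X))$ transfers directly.
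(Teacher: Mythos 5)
Your proof is correct and follows essentially the same route as the paper: reduce to the one-dimensional problem via the good extension $\phi$ of Proposition \ref{good}, identify the integrand with $d\langle\phi(X)\rangle_t$ via Lemma \ref{newlemma}(iv), swap $1_{[0,\varepsilon)}(d(X_t,\Gamma))$ for $1_{[0,\varepsilon)}(\phi(X_t))$ via Proposition \ref{good}(iii) for $\varepsilon<\varepsilon_1(\omega)$, and then invoke the a.s.\ existence of a one-dimensional local-time limit. The only cosmetic difference is that you symmetrize the interval to $(-\varepsilon,\varepsilon)$ and identify the limit as $\tilde L^0_T(\phi(X))$, whereas the paper works with the one-sided interval and the L\'evy characterization (Revuz--Yor, Ch.\ VI, Cor.\ 1.9), concluding $L_T^\Gamma(X)=\tfrac12 L^0_T(\phi(X))$; since $\phi\ge 0$ throughout the construction, $L^{0-}_T(\phi(X))=0$, so $\tilde L^0_T(\phi(X))=\tfrac12 L^0_T(\phi(X))$ and the two normalizations agree.
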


\begin{remark}
Let $\psi$ be an arbitrary $C^2(\mathbb R^N)$ extension of the signed distance $\delta_\Gamma$. Then to be rigorous we should write $\partial_{i}\psi$ instead of $\partial_{i}\delta_\Gamma$, because $\delta_\Gamma$ is not defined outside $\mathcal U$. But the limit does not depend on the choice of the extension: indeed for each fixed $\omega\in\Omega$ the trajectory of the process $X$ remains inside a compact ball $B$, and so we may take an $\varepsilon_0(\omega)>0$ such that $\mathcal U\cap B\supseteq \mathcal U_{\varepsilon_0}(\Gamma)\cap B$. Hence for each $\varepsilon<\varepsilon_0(\omega)$ the limit depends only on $\delta_\Gamma$.
\end{remark}

\begin{proof}
Let $\phi$ be the good extension of $d(\cdot,\Gamma)$ defined in Proposition \ref{good}, by L\'{e}vy characterization of local times (see \cite{RevuzYor}, Corollary 1.9, Chapter VI) there exists a.s.
$$I:=\lim_{\varepsilon\rightarrow0}\frac{1}{\varepsilon}\int_{0}^{T}1_{[0,\varepsilon)}\left(  \phi\left(  X_{t}\right)
 \right)  d\left\langle  \phi\left(  X\right)\right\rangle _{t}.$$
The first claim is proved because for each fixed $\omega\in\Omega$ we have $L_{T}^{\Gamma}\left(  X\right)  =\frac{1}{2}I$. Indeed the trajectory $X_t(\omega)$, for each $t\in[0,T]$ remains inside a compact ball $B(\omega)$; then we can use Proposition \ref{good}, part (iii) and obtain that there exists $\varepsilon_1(\omega)>0$ such that for each $\varepsilon<\varepsilon_1(\omega)$ we have $\left\{x\in B:\ \phi(x)<\varepsilon_1\right\}=\left\{x\in B:\ d(x,\Gamma)<\varepsilon_1\right\}$; and in particular for each $\varepsilon<\varepsilon_1$ we have $1_{[0,\varepsilon)}\left(  \phi\left(  X_{t}\right)\right)=1_{[0,\varepsilon)}\left(  d\left(  X_{t},\Gamma\right)  \right)$. Moreover, following Lemma \ref{newlemma}, part (iv), for each $t$ such that $X_t\in \mathcal{V}$ we have that
$$\langle \phi(X)\rangle_t=\sum_{i,j=1}^{N}\partial_{i}\delta_\Gamma\left(  X_{t}\right)  \partial_{j}%
\delta_\Gamma\left(  X_{t}\right)  d\left\langle X^{i},X^{j}\right\rangle _{t}$$
and this is true whenever $d\left(  X_{t},\Gamma\right)<\varepsilon_1$. In particular, thanks to the same Lemma, if $X$ is a Brownian semimartingale we obtain also the second claim.
\end{proof}

\begin{remark}
The formula given above which defines $L_{T}^{\Gamma}\left(  X\right)  $ in
the general case may look strange at first sight. However, it is the natural
one if we think to the particular case of an hyperplane $\Gamma$. In that
case, the natural definition would be the classical local time (which includes
the time-change due to the quadratic variation) of the projection of $X$ along
the normal to $\Gamma$. This is the formula above, as we also show below in
Proposition \ref{Proposition example}.
\end{remark}

Here we will suppose that $X$ is a Brownian semimartingale and we will also assume that there exists an $\varepsilon>0$ such that $\mathcal U=\mathcal U_\varepsilon(\Gamma)$: this is because we need that for each $a<\varepsilon$ the level sets $\Gamma_a=\{x:\ d(x,\Gamma)=a\}$ are leaves-manifolds.

The following geometric lemma is about the relation between $d(\cdot,\Gamma)$ and $d(,\cdot,\Gamma_a)$.
\begin{lemma}\label{orto1}
Let be $a>0$ and $\varepsilon>0$, then the following properties are equivalent:

i) $d\left(  x,\Gamma_{a}\right)  \in\lbrack0,\varepsilon)$

ii) $d(x,\Gamma)  \in\left(a-\varepsilon,a+\varepsilon\right)  $.
\end{lemma}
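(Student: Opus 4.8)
The plan is to exploit the structure of the tubular neighborhood $\mathcal{U}_\varepsilon(\Gamma)$, using that $\Gamma$ has a well-defined nearest-point projection $P_\Gamma$ there and that $\delta_\Gamma$ satisfies the eikonal equation $|\nabla\delta_\Gamma|=1$ (Lemma \ref{prop dist}, part (vi)). The key geometric fact I would establish first is the following ``layering'' property: for $x\in\mathcal{U}_\varepsilon(\Gamma)$ with $d(x,\Gamma)=t_0\in(0,\varepsilon)$, the nearest point of $x$ on the leaf $\Gamma_a$ (for $a$ close to $t_0$, on the same side of $\Gamma$ as $x$) is obtained by flowing along the normal geodesic through $x$ and $P_\Gamma(x)$: explicitly, writing $n(x)=\nabla\delta_\Gamma(x)$ for the unit normal, the point $P_\Gamma(x)+a\,n(x)$ lies on $\Gamma_a$ and realizes the distance $|t_0-a|$ from $x$. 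This is the content, at the level of the leaves, of the fact that the integral curves of $\nabla\delta_\Gamma$ are distance-minimizing segments; I would cite Lemma \ref{prop dist} (parts (iv)--(vi)) and the standard tubular-neighborhood picture rather than re-derive it.

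Granting this, the proof of the equivalence is short. For (i)$\Rightarrow$(ii): if $d(x,\Gamma_a)<\varepsilon$, pick $y\in\Gamma_a$ with $|x-y|<\varepsilon$; then by the triangle inequality $|d(x,\Gamma)-d(y,\Gamma)|\le|x-y|<\varepsilon$, and since $d(y,\Gamma)=a$ we get $d(x,\Gamma)\in(a-\varepsilon,a+\varepsilon)$. Note this direction does not even need the geometric fact above, only $1$-Lipschitzness of $d(\cdot,\Gamma)$ and the definition $\Gamma_a=\{d(\cdot,\Gamma)=a\}$. For (ii)$\Rightarrow$(i): suppose $d(x,\Gamma)=t_0\in(a-\varepsilon,a+\varepsilon)$. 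If $t_0=0$ the conclusion $d(x,\Gamma_a)\le a<\varepsilon$ is immediate since then $x\in\Gamma$ and any point of $\Gamma_a$ at distance... more carefully, one checks $d(x,\Gamma_a)\le a$ by moving along a normal; and $a<\varepsilon$ from (ii) with $t_0=0$. If $t_0>0$, use the layering property: the point $z:=P_\Gamma(x)+a\,n(x)$ belongs to $\Gamma_a$ and satisfies $|x-z|=|t_0-a|<\varepsilon$, hence $d(x,\Gamma_a)\le|t_0-a|<\varepsilon$, i.e.\ (i) holds.

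The main obstacle is making the ``layering'' claim precise and correctly handling the sign/side issues: one must be sure that $P_\Gamma(x)+a\,n(x)$ actually lies in $\Gamma_a$ (not in $\Gamma_{-a}$) and that it is genuinely at distance $|t_0-a|$ from $x$ rather than merely an upper bound being needed — but since we only need the inequality $d(x,\Gamma_a)\le|t_0-a|$ for the direction we care about, the upper bound $|x-z|=|t_0-a|$ suffices and no minimality of $z$ over $\Gamma_a$ is required. The one genuine subtlety is that for (ii)$\Rightarrow$(i) we need $a<\varepsilon$ (so that the relevant leaf $\Gamma_a$ is nonempty and lies inside the region where $\delta_\Gamma$ is controlled); this is exactly guaranteed because, under (ii), $a-\varepsilon<t_0$ and $t_0\ge 0$ force $a<\varepsilon$ when $t_0$ is small, and more robustly it is part of the ambient hypothesis (recalled just before the lemma) that $\mathcal U=\mathcal U_\varepsilon(\Gamma)$ with $a<\varepsilon$ so that $\Gamma_a$ is a leaf-manifold. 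With that in hand the argument closes.
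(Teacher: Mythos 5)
Your (i)$\Rightarrow$(ii) matches the paper's argument, just streamlined: you invoke the $1$-Lipschitzness of $d(\cdot,\Gamma)$ directly, where the paper passes through an auxiliary point $z\in P_\Gamma(y)$ with $d(y,z)=a$; the content is the same, and you rightly note this direction needs nothing beyond $\Gamma_a\subset\{d(\cdot,\Gamma)=a\}$. Your (ii)$\Rightarrow$(i), however, is a genuinely different route. The paper stays purely metric: it takes $z_1\in P_\Gamma(x)$ and the point $y_1$ where the segment $[x,z_1]$ meets $\Gamma_a$, and reads off $d(x,\Gamma)\ge d(x,\Gamma_a)+a$ from additivity of lengths along the segment, hence $d(x,\Gamma_a)<\varepsilon$. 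You instead use the tubular/normal-flow structure to exhibit the explicit witness $z=P_\Gamma(x)+a\,n(x)\in\Gamma_a$ with $|x-z|=|d(x,\Gamma)-a|<\varepsilon$. What your route buys is a uniform treatment of both sides: along $[x,z_1]$ the distance to $\Gamma$ decreases linearly from $d(x,\Gamma)$ to $0$, so the paper's $y_1$ only exists when $d(x,\Gamma)\ge a$, and the case $a-\varepsilon<d(x,\Gamma)<a$ is silently skipped; your flow-based witness covers it. What it costs is the appeal to positive reach and to $x$ lying in the tube with $a$ below the reach, which you explicitly flag as a needed restriction --- and indeed some such restriction is unavoidable, since as literally stated the equivalence can fail (take $\Gamma$ to be two nearby parallel hyperplanes and $x$ between them with $d(x,\Gamma)<a$). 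These are exactly the standing hypotheses of Corollary \ref{orto}, where the lemma is used, so your proof is sound in context; the paper's is lighter on geometric machinery but shares the same hidden restriction, which you surface more honestly.
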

Proof in Appendix A.

\begin{corollary}\label{orto}
If $\varepsilon_0>a>0$ and $\varepsilon<|a-\varepsilon_0|$, then the following properties are equivalent:

i) $x\in\mathcal U_{\varepsilon_0}(\Gamma)$, $d\left(  x,\Gamma_{a}\right)  \in\lbrack0,\varepsilon)$

ii)\ $x\in\mathcal U_{\varepsilon_0}(\Gamma)$, $d(x,\Gamma)  \in\left(a-\varepsilon,a+\varepsilon\right)  $.
\end{corollary}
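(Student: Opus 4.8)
The plan is to read this off directly from Lemma \ref{orto1}, which already gives the equivalence of the two distance conditions ``$d(x,\Gamma_a)\in[0,\varepsilon)$'' and ``$d(x,\Gamma)\in(a-\varepsilon,a+\varepsilon)$'' with no reference to the neighborhood $\mathcal U_{\varepsilon_0}(\Gamma)$. The only thing to add is the observation that, under the standing hypothesis $\varepsilon<|a-\varepsilon_0|=\varepsilon_0-a$ (equivalently $a+\varepsilon<\varepsilon_0$), each of the two conditions already forces $x\in\mathcal U_{\varepsilon_0}(\Gamma)$, so appending that clause to both sides changes nothing.

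Concretely I would argue both implications by a one-line application of Lemma \ref{orto1}. Assume (ii), i.e.\ $x\in\mathcal U_{\varepsilon_0}(\Gamma)$ and $d(x,\Gamma)\in(a-\varepsilon,a+\varepsilon)$. Since $a>0$ and $\varepsilon>0$, Lemma \ref{orto1} applies and yields $d(x,\Gamma_a)\in[0,\varepsilon)$; combined with the assumed $x\in\mathcal U_{\varepsilon_0}(\Gamma)$ this is (i). Conversely, assume (i), i.e.\ $x\in\mathcal U_{\varepsilon_0}(\Gamma)$ and $d(x,\Gamma_a)\in[0,\varepsilon)$; by Lemma \ref{orto1} again $d(x,\Gamma)\in(a-\varepsilon,a+\varepsilon)$, and together with $x\in\mathcal U_{\varepsilon_0}(\Gamma)$ this is (ii). For completeness I would also note that in either case $d(x,\Gamma)<a+\varepsilon<\varepsilon_0$ by the hypothesis on $\varepsilon$, so $x\in\mathcal U_{\varepsilon_0}(\Gamma)$ holds automatically from the distance conditions alone.

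There is essentially no obstacle here: the corollary is a bookkeeping consequence of Lemma \ref{orto1}, and the extra hypothesis $\varepsilon<|a-\varepsilon_0|$ serves only to ensure that the localization to $\mathcal U_{\varepsilon_0}(\Gamma)$ does not interfere with the equivalence. It is stated in this localized form because that is how it will be used later, namely to rewrite indicator functions of the type $1_{\mathcal U_{\varepsilon_0}(\Gamma)}(x)\,1_{[0,\varepsilon)}\!\left(d(x,\Gamma_a)\right)$ as $1_{\mathcal U_{\varepsilon_0}(\Gamma)}(x)\,1_{(a-\varepsilon,a+\varepsilon)}\!\left(d(x,\Gamma)\right)$ in the comparison of $\mathcal L_{A,\phi}^a$ with the geometric local time at the leaf $\Gamma_a$. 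The only point deserving a line of care is checking that Lemma \ref{orto1} is invoked within its stated range $a>0$, $\varepsilon>0$, which is part of the present hypotheses.
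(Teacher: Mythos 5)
Your proof is correct and matches the paper's own argument: both simply invoke Lemma \ref{orto1} for the equivalence of the distance conditions, and note that the hypothesis $\varepsilon<|a-\varepsilon_0|$ makes the localization to $\mathcal U_{\varepsilon_0}(\Gamma)$ automatic. The paper phrases this as ``both the neighborhoods $\mathcal U_\varepsilon(\Gamma_a)$ and $\{x:\ d(x,\Gamma)\in(a-\varepsilon,a+\varepsilon)\}$ are subsets of $\mathcal U_{\varepsilon_0}(\Gamma)$'', which is exactly your remark that each distance condition already forces $x\in\mathcal U_{\varepsilon_0}(\Gamma)$.
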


\begin{proof}
We apply the previous Lemma considering that for each $x\in\mathcal U_{\varepsilon_0}(\Gamma)$, both the neighborhoods $\mathcal U_{\varepsilon}(\Gamma_a)$ and $\left\{x:\ d(x,\Gamma)  \in\left(a-\varepsilon,a+\varepsilon\right)\right\}$ are subsets of $\mathcal U_{\varepsilon_0}(\Gamma)$.
\end{proof}

\begin{theorem1}\label{Thm L=L}
Let $X$ be a Brownian semimartingale in $\mathbb{R}^{N}$. Let $\Gamma$ be a leaf-manifold and $\mathcal U$ be an open neighborhood of $\Gamma$ as in Proposition \ref{def dist sign}. Let $\phi:\mathbb{R}^{N}\rightarrow\mathbb{R}$ be a good extension of $d(\cdot,\Gamma)$ and $\mathcal{V}\subset \mathcal U$ be an open neighborhood of $\Gamma$ satisfying Proposition \ref{good}. Then the assumptions of Theorem \ref{Thm occupation time formula localized} are satisfied with $A=\mathcal{V}$. Moreover suppose there exists $\varepsilon_0>0$ such that $\mathcal U=\mathcal{U}_{\varepsilon_{0}}(\Gamma)$ so that we can take also $\mathcal V=\mathcal{U}_{\varepsilon_{1}}(\Gamma)$ for a fixed $\varepsilon_1\in(0,\varepsilon_0)$. Then we have
\begin{equation*}
\mathcal{L}_{A,\phi}^{a}=L_{T}^{\Gamma_{a}}\left(  X\right)
\end{equation*}
for a.e. $a\in\left[ 0,\varepsilon_{0}\right)  $, where
$\Gamma_{a}=\left\{  x\in\mathcal V:d(x,\Gamma)=a\right\}$.
\end{theorem1}

\begin{proof}
The set $\mathcal V=\mathcal U_{\varepsilon_1}(\Gamma)$ has a tubular neighborhood structure (see \cite{Kosinski}) that we can define following Proposition \ref{posit}, indeed the regularity of $\delta_\Gamma$ implies that the manifold has positive reach. Hence for each $a<\varepsilon_1$, the set $\Gamma_a$ is a leaf-manifold. Thus we can apply Theorem \ref{Thm local time on manifolds} and define%
\[
L_{T}^{\Gamma_{a}}\left(  X\right)  :=\lim_{\varepsilon\rightarrow0}\frac
{1}{2\varepsilon}\int_{0}^{T}1_{[0,\varepsilon)}(d(X_{t},\Gamma_{a}))dt.
\]
From Corollary \ref{orto}, for each $a\in\left[
0,\varepsilon_{1}\right)  $ and very small $\varepsilon$ we
have%
\begin{align*}
1_{[0,\varepsilon)}(d(x,\Gamma_{a}))  &  =1_{x\in A}1_{(a-\varepsilon
,a+\varepsilon)}(d(x,\Gamma))=1_{x\in A}1_{(a-\varepsilon,a+\varepsilon)}(\phi(x)),
\end{align*}
because $A=\mathcal{V}$. Hence
\begin{align*}
L_{T}^{\Gamma_{a}}\left(  X\right)   &  =\lim_{\varepsilon\rightarrow0}%
\frac{1}{2\varepsilon}\int_{0}^{T}1_{x\in A}1_{(a-\varepsilon,a+\varepsilon
)}(\phi(x))dt=\mathcal{L}_{A,\phi}^{a}.
\end{align*}
\end{proof}

\subsection{The graph local time}
The geometric local time introduced above is intrinsic, in the sense that it
is independent of the coordinate system of $\mathbb{R}^{N}$:\ indeed, it is
defined only in terms of $\Gamma$, $X$, and the function $\delta_{\Gamma}$. In
this section we compare the geometric local times with the graph local times defined below and we show in
particular that they are different; moreover, $L_{X,T}^{\Gamma,graph}$ may
change value if we change the coordinate system used to describe $\Gamma$ as a
graph. We explain this by the simple example of Proposition \ref{Proposition example} below.

Let $g:\mathbb{R}^{N-1}\rightarrow\mathbb{R}$ be a $C^{2}$ function and let
$\Gamma$ be its graph:
\[
\Gamma=\left\{  \left(  x^{1},...,x^{N}\right)  \in\mathbb{R}^{N}%
:x^{N}=g\left(  x^{1},...,x^{N-1}\right)  \right\}  .
\]
Following \cite{Peskir}, let us define the \textit{graph local
time\footnote{This name is given here only to distinguish the notion from the
geometric local time given above.} of }$X$ at $\Gamma$ as
\[
L_{T}^{\Gamma,graph}\left(  X\right)  =\lim_{\varepsilon\rightarrow0}\frac
{1}{2\varepsilon}\int_{0}^{T}1_{[0,\varepsilon)}\left(  \left\vert
Y_{t}\right\vert \right)  d\left\langle Y\right\rangle _{t}%
\]
where
\[
Y_{t}=X_{t}^{N}-g\left(  X_{t}^{1},...,X_{t}^{N-1}\right)  .
\]

Given $v\in\mathbb{R}^{N}$, by $v\cdot X$ we mean the process $\sum_{i=1}^{N}v^{i}X^{i}$.

\begin{proposition}
\label{Proposition example}In $\mathbb{R}^{N}$, let $\Gamma$ be the $\left(
N-1\right)  $-dimensional subspace orthogonal to a given unitary vector $v$.
Then%
\[
L_{T}^{\Gamma}\left(  X\right)  =\lim_{\varepsilon\rightarrow0}\frac
{1}{2\varepsilon}\int_{0}^{T}1_{[0,\varepsilon)}\left(  d\left(  X_{t}%
,\Gamma\right)  \right)  d\left\langle v\cdot X\right\rangle _{t}%
\]
Given a system of coordinates in $\mathbb{R}^{N}$ (we write $x=\left(
x^{1},...,x^{N}\right)  $), let $a=\left(  a^{1},...,a^{N-1}\right)
\in\mathbb{R}^{N}$ be the vector such that $\Gamma$ is defined by the
equation $x^{N}=\sum_{i=1}^{N-1}a^{i}x^{i}$. Then%
\[
L_{T}^{\Gamma,graph}\left(  X\right)  =\sqrt{1+\left\vert a\right\vert ^{2}%
}L_{T}^{\Gamma}\left(  X\right)  .
\]

\end{proposition}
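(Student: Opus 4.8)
The plan is to express both the geometric and the graph local time as the one–dimensional symmetric local time at $0$ of the projection of $X$ onto the normal direction of $\Gamma$, and then to conclude from the elementary scaling of local time under multiplication of a semimartingale by a positive constant.

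For the first identity I would note that, since $\Gamma=v^{\perp}$ with $|v|=1$, one has $d(x,\Gamma)=|v\cdot x|$ and $\delta_{\Gamma}(x)=v\cdot x$, which is globally of class $C^{\infty}$; in particular $\mathcal U=\mathbb R^{N}$ is admissible in Proposition \ref{def dist sign} and $\partial_{i}\delta_{\Gamma}(x)=v^{i}$ for all $x,i$. (Although $\Gamma$ is a non-compact hyperplane rather than a leaf-manifold, this global smoothness is all that the definition of $L_{T}^{\Gamma}(X)$ in Theorem \ref{Thm local time on manifolds} uses.) Inserting $\partial_{i}\delta_{\Gamma}=v^{i}$ into that defining formula, the factors $\partial_{i}\delta_{\Gamma}\partial_{j}\delta_{\Gamma}$ are constants and can be pulled out, and $\sum_{i,j}v^{i}v^{j}\,d\langle X^{i},X^{j}\rangle_{t}=d\langle v\cdot X\rangle_{t}$ by bilinearity of the quadratic covariation; since moreover $1_{[0,\varepsilon)}(d(X_{t},\Gamma))=1_{(-\varepsilon,\varepsilon)}(v\cdot X_{t})$, this yields at once
\[
L_{T}^{\Gamma}(X)=\lim_{\varepsilon\to0}\frac{1}{2\varepsilon}\int_{0}^{T}1_{[0,\varepsilon)}\bigl(d(X_{t},\Gamma)\bigr)\,d\langle v\cdot X\rangle_{t}=\tilde L_{T}^{0}(v\cdot X),
\]
which is the first claim.

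For the graph local time I would use the coordinate description: $\Gamma=\{x:x^{N}-\sum_{i=1}^{N-1}a^{i}x^{i}=0\}$ is the hyperplane through the origin with (non-unit) normal $w=(-a^{1},\dots,-a^{N-1},1)$, so $|w|=\sqrt{1+|a|^{2}}$ and the unit normal of the first part is $v=w/|w|$. The process appearing in the definition of $L_{T}^{\Gamma,graph}$ is $Y_{t}=X_{t}^{N}-\sum_{i=1}^{N-1}a^{i}X_{t}^{i}=w\cdot X_{t}$, whence $L_{T}^{\Gamma,graph}(X)=\tilde L_{T}^{0}(w\cdot X)$. It then remains to compare $\tilde L_{T}^{0}(w\cdot X)$ with $\tilde L_{T}^{0}(v\cdot X)$. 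Since $w\cdot X=|w|\,(v\cdot X)$, I would invoke the scaling relation $\tilde L_{T}^{0}(cZ)=c\,\tilde L_{T}^{0}(Z)$, valid for any continuous semimartingale $Z$ and any $c>0$: it follows from $d\langle cZ\rangle_{t}=c^{2}\,d\langle Z\rangle_{t}$ and $1_{(-\varepsilon,\varepsilon)}(cZ_{t})=1_{(-\varepsilon/c,\varepsilon/c)}(Z_{t})$ together with the substitution $\varepsilon'=\varepsilon/c$ in the defining limit. Taking $c=|w|$ and $Z=v\cdot X$ gives
\[
L_{T}^{\Gamma,graph}(X)=\tilde L_{T}^{0}(w\cdot X)=|w|\,\tilde L_{T}^{0}(v\cdot X)=\sqrt{1+|a|^{2}}\;L_{T}^{\Gamma}(X),
\]
the second claim.

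The argument is essentially a computation, so there is no serious obstacle; the one point deserving care is purely a matter of hygiene, namely that $\Gamma$ here is a non-compact hyperplane and so does not literally fall under the ``leaf-manifold'' hypothesis of Theorem \ref{Thm local time on manifolds}. This is harmless: the signed distance $\delta_{\Gamma}(x)=v\cdot x$ is globally $C^{\infty}$, so the definition of $L_{T}^{\Gamma}(X)$ and the reasoning of that theorem apply verbatim with $\mathcal U=\mathbb R^{N}$; alternatively one checks directly that the limits above exist, being the symmetric local times at $0$ of the one-dimensional continuous semimartingales $v\cdot X$ and $w\cdot X$.
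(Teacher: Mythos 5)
Your proof is correct and follows essentially the same route as the paper's: both identify $L_{T}^{\Gamma}(X)$ via $\nabla\delta_{\Gamma}\equiv v$ and the bilinearity of quadratic covariation, and both obtain the factor $\sqrt{1+|a|^{2}}$ by rescaling the window in the defining limit. The only stylistic difference is that you package the rescaling as an explicit lemma $\tilde L_{T}^{0}(cZ)=c\,\tilde L_{T}^{0}(Z)$, whereas the paper carries out the substitution $d(X_{t},\Gamma)\in[0,\varepsilon)\Leftrightarrow|Y_{t}|<\varepsilon\sqrt{1+|a|^{2}}$ inline; your remark about the hyperplane not being compact is a reasonable piece of hygiene that the paper passes over silently.
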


\begin{proof}
We choose the system of coordinates in $\mathbb{R}^{N}$ and the vector
$a\in\mathbb{R}^{N-1}$ as in the statement. One can introduce a global signed
distance function $\delta_{\Gamma}$, such that
\[
\nabla\delta_{\Gamma}\left(  x\right)  =v\qquad\text{for all }x\in
\mathbb{R}^{N}.
\]
Hence%
\begin{align*}
L_{T}^{\Gamma}\left(  X\right)   &  =\lim_{\varepsilon\rightarrow0}\frac
{1}{2\varepsilon}\sum_{i,j=1}^{N}\int_{0}^{T}1_{[0,\varepsilon)}\left(
d\left(  X_{t},\Gamma\right)  \right)  v^{i}v^{j}d\left\langle X^{i}%
,X^{j}\right\rangle _{t}\\
&  =\lim_{\varepsilon\rightarrow0}\frac{1}{2\varepsilon}\int_{0}%
^{T}1_{[0,\varepsilon)}\left(  d\left(  X_{t},\Gamma\right)  \right)
d\left\langle v\cdot X\right\rangle _{t}.
\end{align*}
The first relation is proved.

We also have, using the coordinates,
\[
v\cdot X=\frac{1}{\sqrt{1+\left\vert a\right\vert ^{2}}}\left(  X^{N}%
-a\cdot\overline{X}\right)  =\frac{1}{\sqrt{1+\left\vert a\right\vert ^{2}}%
}Y_{t}%
\]
where we have written $X=\left(  \overline{X},X^{N}\right)  $. Hence%
\[
L_{T}^{\Gamma}\left(  X\right)  =\frac{1}{1+\left\vert a\right\vert ^{2}}%
\lim_{\varepsilon\rightarrow0}\frac{1}{2\varepsilon}\int_{0}^{T}%
1_{[0,\varepsilon)}\left(  d\left(  X_{t},\Gamma\right)  \right)
d\left\langle Y_{t}\right\rangle _{t}.
\]

Finally, it is a simple exercise to check that $d\left(  X_{t},\Gamma\right)
=r$ if and only if $\left\vert Y_{t}\right\vert =r\sqrt{1+\left\vert
a\right\vert ^{2}}$, and thus $d\left(  X_{t},\Gamma\right)  \in
\lbrack0,\varepsilon)$ if and only if $\left\vert Y_{t}\right\vert
<\varepsilon\sqrt{1+\left\vert a\right\vert ^{2}}$.

\begin{align*}
L_{T}^{\Gamma}\left(  X\right)   &  =\frac{1}{1+\left\vert a\right\vert ^{2}%
}\lim_{\varepsilon\rightarrow0}\frac{1}{2\varepsilon}\int_{0}^{T}%
1_{[0,\varepsilon\sqrt{1+\left\vert a\right\vert ^{2}})}\left(  \left\vert
Y_{t}\right\vert \right)  d\left\langle Y\right\rangle _{t}\\
&  =\frac{1}{\sqrt{1+\left\vert a\right\vert ^{2}}}\lim_{\varepsilon
\rightarrow0}\frac{1}{2\varepsilon}\int_{0}^{T}1_{[0,\varepsilon)}\left(
\left\vert Y_{t}\right\vert \right)  d\left\langle Y\right\rangle _{t}=\frac{1}{\sqrt{1+\left\vert a\right\vert ^{2}}}L_{T}^{\Gamma,graph}\left(
X\right)  .
\end{align*}
The proof is complete.
\end{proof}

When $g$ is not linear, at present we can only guess that a suitable
localization argument could lead to the identity
\[
L_{T}^{\Gamma}\left(  X\right)  =\lim_{\varepsilon\rightarrow0}\frac
{1}{2\varepsilon}\int_{0}^{T}1_{[0,\varepsilon)}\left(  \left\vert
Y_{t}\right\vert \right)  \frac{1}{\sqrt{1+\left\vert \nabla g\left(
X_{t}^{1},...,X_{t}^{N-1}\right)  \right\vert ^{2}}}d\left\langle
Y\right\rangle _{t}%
\]
but the proof is not trivial and will not be discussed further here.

\section*{Appendix}
\subsection*{A)$\quad$ Some proofs of geometry}
\paragraph{Proof of Proposition \ref{def dist sign}.}
\begin{proof2}
For each point $x\in\Gamma$ there exists $\varepsilon_0>0$ and a diffeomorphism $\psi$ from the ball $B(x,\varepsilon_0)$ into $B(0,1)\subset\mathbb R^N$, such that the restriction of $\psi$ on $\Gamma\cap B(x,\varepsilon_0)$ is a diffeomorphism onto the $(N-1)$-dimensional disc. Then there exists $\varepsilon_1\in\left(0,\frac{\varepsilon_0}{2}\right)$ such that $\Gamma\cap \overline{B(x,\varepsilon_0)}$ because of its compactness has a tubular neighborhood of width equal to $\varepsilon_1$, on which the distance from the manifold coincides with the distance on the normal bundle (see \cite{Kosinski}). Moreover we can use the orientation of the normal bundle to locally define a function $\delta^x_\Gamma$ satisfying (i) and (ii) w.r.t$.$ $\Gamma\cap B(x,\varepsilon_0)$. If we restrict the previous tubular neighborhood around the submanifold $\Gamma\cap B(x,\varepsilon_1)$, and we call it $U_x$, we have also that $d(\cdot,\Gamma\cap B(x,\varepsilon_1))=d(\cdot,\Gamma)$ on $U_x$. Moreover we can define $\mathcal U=\bigcup_{x\in\Gamma} U_x$ and we obtain a global neighborhood of $\Gamma$. Then thanks to its orientability we have that all the local signed distances have a compatible signature: for each $x,y\in\Gamma$, $y\in U_x\cap U_z$ we have $\delta^x_\Gamma(y)=\delta^z_\Gamma(y)$. For each $y\in\mathcal U$ exists $x\in\Gamma$ such that $y\in U_x$ and we can define $\delta_\Gamma(y)=\delta^x_\Gamma(y)$; it satisfies both (i) and (ii).
\end{proof2}

\paragraph{Proof of Lemma \ref{prop dist}.}
\begin{proof2}
The functions $d\left(  \cdot,\Gamma\right)  $ and
$\delta_{\Gamma}\left(  \cdot\right)  $ are both continuous and different from
zero in the open set $\mathcal{U}\backslash\Gamma$. The ratio $\frac
{\delta_{\Gamma}\left(  x\right)  }{d\left(  x,\Gamma\right)  }$ is a
continuous well defined function on $\mathcal{U}\backslash\Gamma$, equal to
$\pm1$, hence it is constant on each connected component of $\mathcal{U}%
\backslash\Gamma$. Property (iii) is proved. Property (iv) is an easy
consequence of (i) and (iii). Property (v) is true by contradiction: if exists $x\in U$ such that $P_F(x)\supseteq\{y_1,y_2\}$ with $y_1\neq y_2$, then $\frac{x-y_1}{|x-y_1|}\neq\frac{x-y_2}{|x-y_2|}$. Then if we substitute $v_1= \frac{x-y_1}{|x-y_1|}$ in (\ref{ceco}) we have that $1=\frac{x-y_1}{|x-y_1|}\cdot\frac{x-y_1}{|x-y_1|}<\frac{x-y_1}{|x-y_1|}\cdot\frac{x-y_2}{|x-y_2|}$ by Cauchy-Schwartz inequality, hence $D_{v_1} d(x,F)=v_1\cdot\frac{x-y}{|x-y|}$. Vice versa if we substitute $v_2= \frac{x-y_2}{|x-y_2|}$ in (\ref{ceco}) we obtain $D_{v_2} d(x,F)=v_2\cdot\frac{x-y}{|x-y|}$. This is incompatible with the differentiability of $\delta_\Gamma$ in $U$. About property (vi), if $x\in U\backslash\Gamma$, thanks to (iii) we have
$$| \nabla\delta_{\Gamma}\left(  x\right)|=| \nabla d(\cdot,\Gamma)\left(  x\right)|=\max_{|v|=1} |D_v d(x,F)|=1$$
where we substituted $v=P_{\Gamma}\left(  x\right)$ in (\ref{ceco}) and $P_{\Gamma}(\cdot)$ is well defined thanks to (v). If otherwise $x\in\Gamma$, we have $|\nabla\delta_{\Gamma}\left(  x\right)|=1$ by continuity thanks to (i).
\end{proof2}

\paragraph{Proof of Proposition \ref{posit}.}
\begin{proof2}
We have that $\mathcal U_\varepsilon(\Gamma)$ is the union of all the normal segments $N_x$, $x\in\Gamma$, of lengths $\varepsilon$ on both sides of $\Gamma$, and this union must be disjoint. Indeed by contradiction let $x_1,x_2\in\Gamma$, $x_1\neq x_2$, be such that there exists $y\in N_{x_1}\cap N_{x_2}$, and suppose that $d(y,x_1)\ge d(y,x_2)$. Then the continuous function $d(\cdot,x_1)-d(\cdot,x_2)$ is positive in $y$ and negative in $x_1$: there exists a point $z\in N_{x_1}$ such that $d(z,x_1)-d(z,x_2)=0$. This is incompatible with the positive reach property. Hence $\mathcal U_\varepsilon(\Gamma)$ is a global tubular neighborhood of $\Gamma$ (see \cite{Kosinski} for the definition): the distance from the manifold coincides on that neighborhood with the distance on the normal bundle and we can use the orientation to locally define a function $\delta_\Gamma$ satisfying properties (i) and (ii) of the Proposition \ref{def dist sign}.
\end{proof2}

\paragraph{Proof of Lemma \ref{orto1}.}
\begin{proof2}
The case $a=0$ is trivial: let us suppose $a\neq 0$. Let be $x\in\mathbb R^N$ and $y\in P_{\Gamma_a}(x)$ (the set of metric projections of $x$ on $\Gamma_a$), so that we have $d(x,\Gamma_a)=d(x,y)$. Then define $z\in P_{\Gamma}(y)$ and we have $d(y,z)=a$. Then
$$d(x,\Gamma)\le d(x,z) \le d(x,y)+d(y,z)=a+d(x,y)$$
and if $d(x,\Gamma_a)<\varepsilon$, we have $d(x,y)<\varepsilon$ and $d(x,\Gamma)\le a+\varepsilon$. Moreover define $z_1\in P_{\Gamma}(x)$, such that $d(x,\Gamma)=d(x,z_1)$. Then
$$d(x,\Gamma)= d(x,z_1) \ge d(y,z_1)-d(y,x)= a-d(x,y)$$
and if $d(x,\Gamma_a)<\varepsilon$, we have $d(x,y)<\varepsilon$ and $d(x,\Gamma)\ge a-\varepsilon$. We proved
$$a-\varepsilon\le d(x,\Gamma)(x)\le a+\varepsilon.$$
Vice versa define $y_1$ as the intersection of the segment linking $x$ and $z_1$. So we obtain $d(x,z_1)=d(x,y_1)+d(y_1,z_1)$ but $d(x,y_1)\ge d(x,\Gamma_a)$ and $d(y_1,z_1)\ge a$, so
$$d(x,\Gamma)=d(x,z_1)\ge d(x,\Gamma_a)+a.$$
In particular from (ii) we have $0\le d(x,\Gamma)\le a+\varepsilon$ and so $d(x,\Gamma_a)\le\varepsilon$.
\end{proof2}

\subsection*{B)$\quad$ The good extension construction}
\paragraph{Proof of Proposition \ref{good}.}
\begin{proof2}
Let $\theta:\mathbb R^N\rightarrow \mathbb R$ be a cut-off function of class $C^2$ such that $\theta(x)=1$ if $x\in\overline{ \mathcal{V}}$ and $\theta(x)=0$ if $x\in \mathcal U^c$. Thanks to Lemma \ref{prop dist}, the distance $d(\cdot,\Gamma)$ is regular in $\mathcal U\backslash\Gamma$, so we can regularize it on all $\mathbb R^N\backslash\Gamma$ without changing its value inside $\mathcal{V}$. We call $\tilde d$ this mollified distance. Then we take, for all $x\in\mathbb R^N$
$$\phi(x):=d_1(x)+d_2(x)$$
where $d_1(x):=\left(1-\theta(x)\right)\tilde d(x)$, $d_2(x):=\theta(x)d(x,\Gamma)$. We have that $d_1(X)$ is a continuous semimartingale because it is the composition of $X$ with a $C^2$ function. Moreover if we define $f(x)=\theta(x)\delta_\Gamma(x)$ for all $x\in \mathcal U$ and $f(x)=0$ when $x\in \mathcal U^c$, we obtain that $f$ is a function of class $C^2(\mathbb R^N)$, and $d_2(x)=|f(x)|$; so, thanks to the It\^o-Tanaka formula, also $d_2(X)$ is a continuous semimartingale, and $\phi(X)$ too.

To verify the third property, we remind that by compactness there exists an $\varepsilon_0>0$ such that $\mathcal{V}\cap B\supseteq \mathcal U_{\varepsilon_0}(\Gamma)\cap B$, so if we had taken mollifiers that do not change too much the value of $d(\cdot, \Gamma)$, there exists $\varepsilon_1<\varepsilon_0$ such that if $\phi(x)<\varepsilon<\varepsilon_1$ then $d(x,\Gamma)<\varepsilon_0$, and $x\in\mathcal U_{\varepsilon_0}(\Gamma)$. In particular if $x\in B$ then $x\in\mathcal{V}\cap B$ and $\phi(x)=d(x,\Gamma)$. Vice versa if $d(x,\Gamma)<\varepsilon<\varepsilon_1$ then again $d(x,\Gamma)<\varepsilon_0$ and we obtain like above that if $x\in B$ then $\phi(x)=d(x,\Gamma)$.
\end{proof2}

\paragraph{Proof of Lemma \ref{newlemma}.}
\begin{proof2}
Let $\psi$ be an arbitrary $C^2(\mathbb R^N)$ extension of $\delta_\Gamma$ from $\overline{\mathcal{V}}$. If $X_t\in \mathcal{V}$, hence $\phi(X_t)=|\delta_\Gamma(X_t)|=|\psi(X_t)|$, and we can apply It\^o-Tanaka formula to obtain
\begin{align*}
d\left\langle  |\psi\left(  X\right)|   \right\rangle _{t}
&  =\sum_{i,j=1}^{N}\partial_{i}\psi\left(  X_{t}\right)  \partial_{j}%
\psi\left(  X_{t}\right)  d\left\langle M^{i},M^{j}\right\rangle _{t}\\
&  =\sum_{i,j=1}^{N}\partial_{i}\delta_\Gamma\left(  X_{t}\right)  \partial_{j}%
\delta_\Gamma\left(  X_{t}\right)  d\left\langle X^{i},X^{j}\right\rangle _{t},
\end{align*}
where $M$ was the local martingale part of $X$. Then, if $X$ is a Brownian semimartingale, we have
$$\sum_{i,j=1}^{N}\partial_{i}\delta_\Gamma\left(  X_{t}\right)  \partial_{j}%
\delta_\Gamma\left(  X_{t}\right)  d\left\langle X^{i},X^{j}\right\rangle _{t}=|\nabla \delta_\Gamma(X_t)|^2dt=dt$$
by Lemma \ref{prop dist}. The proof of property (iv) is complete.

Moreover, if $X$ is a Brownian semimartingale, then $\phi$ satisfies the hypotheses of Definition \ref{definition non-degenerate} with $A=\mathcal{V}$ and $D_\phi=\mathcal{V}\backslash \Gamma$. Hence thanks to Proposition \ref{Prop suff cond for control}, it controls $X$ in quadratic variation on $\mathcal{V}$. Indeed $d\langle X\rangle_t=dt$ is trivially Lipshitz continuous, $\phi$ is locally 1-Lipshitz in $\mathcal{V}$ because it is equal to $d(\cdot,\Gamma)$ and we can use (\ref{ceco}) to estimate its partial derivatives. The property that
$$\left\{\omega\in\Omega:\ X_t(\omega)\in \Gamma\right\}=\left\{\omega\in\Omega:\ \phi(X_t(\omega))=0\right\}$$
implies that for a.e$.$ $t\in[0,T]$ we have
$$P\left\{\omega\in\Omega:\ X_t(\omega)\in \Gamma\right\}=P\left\{\omega\in\Omega:\ \phi(X_t(\omega))=0\right\}=0.$$
Otherwise we would have that on a not negligible event, there would exists $\Theta(\omega)\subset[0,T]$ with Lebesgue measure $\lambda>0$, such that $\forall t\in \Theta$ it would be $\phi(X_t)=0$. And this would contradict the Occupation time formula (see \cite{RevuzYor}, Chapter VI, Corollary 1.6)
$$\int_\Theta d\langle \phi(X)\rangle_s\le\int_0^T 1_{\{0\}}(\phi(X_s)d\langle \phi(X)\rangle_s=\int_{-\infty}^{+\infty} 1_{\{0\}}(a)da=0.$$
Indeed $\forall t\in\Theta$, $X_t\in\Gamma\subset \mathcal{V}$ and then we can use part (iv) of this lemma to obtain $d\langle \phi(X)\rangle_t=dt$, hence
$$\int_\Theta d\langle \phi(X)\rangle_s=\int_\Theta ds=\lambda>0.$$
Additionally $\phi(X)$ is trivially non-degenerate (following Definition \ref{definition non-degenerate}).
\end{proof2}


\begin{thebibliography}{99}                                                                                               %

\bibitem {BassChen}R. F. Bass, Z.-Q. Chen, Brownian motion with singular
drift, \textit{Ann. Probab.} \textbf{31} (2003), no. 2, 791-817.

\bibitem {CaraDane}L. Caravenna, S. Daneri, The disintegration of the Lebesgue
measure on the faces of a convex function, \textit{J. Funct. Anal.}
\textbf{258} (2010), no. 11, 3604-3661.

\bibitem {Cerny Engelbert}A. S. Cherny, H.-J. Engelbert, \textit{Singular
stochastic differential equations}, LNM 1858. Springer-Verlag, Berlin, 2005.

\bibitem{Crasta} G. Crasta, I. Fragal\`a, On the characterization of some classes of proximally smooth sets, \textit{Cornell University Library} (2013), arXiv:1305.2810.

\bibitem{Federer} H. Federer, Curvature measures, \textit{Trans. Amer. Math. Soc.} \textbf{93} (1959), 418-491.

\bibitem {Flandoli}F. Flandoli, \textit{Random perturbation of PDEs and fluid
dynamic models}, 40th Probab. Summer School Saint-Flour, 2010, LNM 2015.
Springer, Heidelberg, 2011.

\bibitem {GilbTrud}D. Gilbarg, N. S. Trudinger, \textit{Elliptic Partial
Differential Equations of Second Order}, Springer-Verlag, Berlin 1983.

\bibitem {Kosinski}A. Kosinski, \textit{Differential manifolds}, Academic Press, 1993.

\bibitem {Kry-Ro}N. V. Krylov, M. R\"{o}ckner, Strong solutions of stochastic
equations with singular time dependent drift, \textit{Probab. Theory Related
Fields} \textbf{131} (2005), 154-196.

\bibitem {Peskir1}G. Peskir, A Change-of-Variable Formula with Local
Time on Curves, \textit{J. Theoret. Probab.} \textbf{18} (2005), no. 3,
499-535. Springer, Aarhus.

\bibitem {Peskir}G. Peskir, A Change-of-Variable Formula with Local
Time on Surfaces, \textit{S\'em. de Probab. XL, Lecture Notes in Math.} \textbf{1899} (2007),
69-96. Springer, Aarhus.

\bibitem {RevuzYor}D. Revuz, M. Yor, \textit{Continuous Martingales and
Brownian Motion}, Springer-Verlag, Berlin, 1994.


\bibitem {Sato}K. Sato, H. Tanaka, Local times on the boundary for
multi-dimensional reflecting diffusion, \textit{Proc. Japan Acad.} \textbf{38} (1962), no. 10,
699-702.

\bibitem{Simmons}D. Simmons, Conditional measures and conditional expectation; Rohlin's disintegration theorem, \textit{American Institute of Mathematical Sciences} \textbf{32} (2012) no. 7, 2565-2582.

\bibitem {TaoWright}T. Tao, J. Wright, $L^{p}$\textit-improving bounds for averages
along curves, \textit{J. Amer. Mat. Soc.} \textbf{16} (2003), no. 3, 605-638.

\bibitem{Zajiek} L. Zaj\`i\^cek, Differentiability of the distance function and points of multi-valuedness of the
metric projection in Banach space, \textit{Czechoslovak Mathematical Journal} \textbf{33} (1983), no. 2, 292-308.
\end{thebibliography}
\end{document}